\documentclass[letterpaper,12pt]{article}

\usepackage{tabularx} 
\usepackage{amsmath,amsfonts,stmaryrd,amssymb,bm,mathrsfs, dsfont}  
\usepackage{graphicx} 
\usepackage{natbib}
\usepackage{subcaption}
\usepackage{algorithm,algorithmic}
\usepackage{booktabs}
\usepackage{multirow}
\usepackage{authblk}
\usepackage{paralist}
\usepackage{amsthm} 
\usepackage[margin=1in,letterpaper]{geometry} 
\usepackage[final]{hyperref} 
\usepackage{verbatim}
\hypersetup{
	colorlinks=true,       
	linkcolor=cyan,        
	citecolor=cyan,        
	filecolor=magenta,     
	urlcolor=cyan         
}
\usepackage{blindtext}
\allowdisplaybreaks[4]
 
\newtheorem{myThm}{Theorem}
\newtheorem{myLe}{Lemma}
\newtheorem{myA}{Assumption}
\newtheorem{myRmk}{Remark}
\newtheorem{myProp}{Proposition}

\begin{document}
\title{Spectral co-Clustering in Multi-layer Directed Networks}

\author[1]{\large Wenqing Su}
\author[2]{\large \;Xiao Guo{\linespread{1.4}\selectfont\footnote{{\it Corresponding authors.} Xiao Guo, xiaoguo@nwu.edu.cn. Ying Yang, yangying@tsinghua.edu.cn.}}} 
\author[3]{\large \;Xiangyu Chang}
\author[1]{\large Ying Yang$^{\tiny*}$}
\affil[1]{\normalsize Department of Mathematical Sciences, Tsinghua University}
\affil[2]{\normalsize School of Mathematics, Northwest University}
\affil[3]{\normalsize School of Management, Xi’an Jiaotong University}
\date{}  
\maketitle
\vspace{-10mm}

\begin{abstract}
Modern network analysis often involves multi-layer network data in which the nodes are aligned, and the edges on each layer represent one of the multiple relations among the nodes. Current literature on multi-layer network data is mostly limited to undirected relations. However, direct relations are more common and may introduce extra information. This study focuses on community detection (or clustering) in multi-layer directed networks. To take into account the asymmetry, a novel spectral-co-clustering-based algorithm is developed to detect \emph{co-clusters}, which capture the sending patterns and receiving patterns of nodes, respectively. Specifically, the eigendecomposition of the \emph{debiased} sum of Gram matrices over the layer-wise adjacency matrices is computed, followed by the $k$-means, where the sum of Gram matrices is used to avoid possible cancellation of clusters caused by direct summation. Theoretical analysis of the algorithm under the multi-layer stochastic co-block model is provided, where the common assumption that the cluster number is coupled with the rank of the model is relaxed. After a systematic analysis of the eigenvectors of the population version algorithm, the misclassification rates are derived, which show that multi-layers would bring benefits to the clustering performance. The experimental results of simulated data corroborate the theoretical predictions, and the analysis of a real-world trade network dataset provides interpretable results.

\end{abstract}

\noindent
{\it Keywords:} Multi-layer directed networks, Co-clustering, Spectral methods, Bias-correction

\section{Introduction}
Multi-layer network data arise naturally among various domains, where the nodes are the entities of interest and each network layer represents one of the multiple relations of the entities \citep{mucha2010community,holme2012temporal,kivela2014multilayer,boccaletti2014structure}. A specific type of multi-layer network features identical sets of nodes across each layer, with no inter-layer connections. In this work, we refer to such networks as multi-layer networks, as in \citet{paul2020spectral, jing2021community}, though they are also called multiplex networks. For example, the gene co-expression multi-layer network consists of genes co-expressed at different developmental stages of the animal, and the gene expression patterns at different stages may differ yet remain highly correlated \citep{bakken2016comprehensive, zhang2017finding}. The above example involves undirected relations. In real-world networks, more common relations are \emph{directed}. For example, the Worldwide Food and Agricultural Trade (WFAT) multi-layer network consists of the {direct} trade relationships among the same set of countries across different commodities. Trade relationships concerning different commodities differ but are not entirely unrelated \citep{de2015structural}. 

The multi-layer network has received considerable attention recently, see, e.g., \cite{della2020symmetries}; \cite{macdonald2022latent}; \cite{huang2022spectral} and references therein. Of particular interest is the community detection or clustering problem, where the goal is to partition the network nodes into disjoint \emph{communities} or \emph{clusters} with the help of multiple layers. We focus on the case where the underlying communities are consistent among all the layers. The community detection of multi-layer networks has been well-studied in the lens of multi-layer stochastic block models (multi-layer SBMs) \citep{han2015consistent, valles2016multilayer, paul2016consistent}, namely, equipping each layer of the multi-layer network with a stochastic block model (SBM) \citep{holland1983stochastic}. In an SBM, nodes are first partitioned into disjoint communities, and based on the community membership, the nodes are linked with probability specified by a \emph{block  probability matrix}. Specifically, \cite{han2015consistent} studied the asymptotic properties of spectral clustering and maximum likelihood estimation for multi-layer SBMs as the number of layers increases and the number of nodes remains fixed. \cite{paul2020spectral} studied several spectral and matrix factorization-based methods and provided theoretical guarantees under multi-layer SBMs. \cite{lei2020consistent} derived consistent results for a least squares estimation of community memberships and proved consistency of the global optima for general block structures without imposing the positive-semidefinite assumption for individual layers.
\cite{LeiJ2022Bias} proposed a bias-adjusted spectral clustering for multi-layer SBMs and derived a novel aggregation strategy to avoid the community cancellation of different layers. Considering that different layers may have different community structures, \cite{jing2021community} introduced a mixture multi-layer SBM and proposed a tensor-based method to reveal both memberships of layers and memberships of nodes. \cite{wang2021fast} and \cite{fu2023profile} studied the recovery of community structures in multi-layer SBMs using the pseudo-likelihood method. Also see \citet{bhattacharyya2018spectral,pensky2019spectral,arroyo2021inference,noroozi2022sparse}; among others.

Despite the great efforts on the community detection of multi-layer networks, the following two issues remain to be tackled. First, existing literature is mostly limited to undirected networks. For directed networks, the most common approach is to simply ignore edge directions and use the methods developed for multi-layer undirected networks. However, this simplistic technique is unsatisfactory since the potentially useful information contained in edge directions is not retained. For example, in the WFAT multi-layer network, the trade transactions among countries include both import and export, which is quite different for even a single country. Therefore, clustering nodes regardless of their edge directions would be coarse. To incorporate the asymmetry of directed networks, instead of partitioning the nodes into one set of clusters, it is more reasonable to \emph{co-cluster} the network nodes \citep{malliaros2013clustering,rohe2016co,zhang2022directed}. That is, clustering the nodes in two ways to obtain \emph{co-clusters}, namely, the row clusters (or the sending clusters) and the column clusters (or the receiving clusters). The nodes in the same row (column) cluster have similar sending (receiving) patterns. Hence, the community detection of multi-layer directed networks should be in the context of co-clustering.  

Second, most existing literature on SBMs assumes that the population matrix has rank coupled with the number of underlying communities. That is, the block probability matrix is assumed to be of \emph{full} rank. Such assumption brings benefits to the algebraic properties of the matrices arising from SBMs. However, this assumption is not necessarily met by practical tasks. In the context of multi-layer SBMs, this assumption is inherited in that the population matrix, for example, the \emph{sum} of each layer-wise SBM, is commonly assumed to be of rank equaling  the number of communities, although each \emph{layer-wise} SBM does not necessarily satisfy the assumption \citep{paul2020spectral}. 
In the context of single-layer SBMs, \cite{tang2022asymptotically} studied the asymptotic normality of the estimated block probability matrix without the full rank assumption, while their asymptotic theoretical machinery is not directly applicable to our non-asymptotic justification of the clustering performance.
Thus, to meet practical needs, it is desirable to study the theoretical properties of the population model of multi-layer directed  networks in the rank-deficient regime.  

Motivated by the above problems, we study the problem of co-clustering the multi-layer directed network. We typically assume that each layer of the network is generated from a stochastic co-block model (ScBM) \citep{rohe2016co}, where the underlying row clusters and column clusters are not necessarily the same. Following convention, we call the model over all layers the \emph{multi-layer} ScBM. The first contribution of this paper is a neat, flexible, and computationally efficient spectral-clustering-based algorithm for co-clustering the multi-layer directed network. To avoid possible cancellation of clusters among layers, we use the \emph{sum of Gram} (SoG) matrices of the row (column) spaces of the layer-wise adjacency matrices as the algorithm's target matrix in order to detect row (column) clusters. Then, we perform a bias-correction on the two-way SoG matrices to remove the bias that comes from their diagonal entries. After that, we apply the spectral clustering to the two debiased SoG matrices with possibly different cluster numbers. As the leading eigenvectors approximate the row and column spaces of the two debiased SoG matrices, respectively, it is expected that the resulting two sets of clusters contain nodes with similar sending and receiving patterns, respectively. To the best of our knowledge, this is the first work to study the community detection problem for multi-layer directed networks. 

The second contribution of this work is that we systematically study the algebraic properties of the population version of SoG matrices whose block  probability matrix is \emph{not} necessarily of full rank. In particular, we provide interpretable conditions under which the eigendecomposition (i.e., the first step of the spectral clustering) of the population version of SoG matrices would reveal the underlying communities in multi-layer ScBMs. Based on these findings, we provide rigorous analysis of the consistency of the community estimates. Specifically, we use the decoupling techniques \citep{de1995decoupling,LeiJ2022Bias} to derive the concentration inequalities of the sum of quadratic asymmetric matrices. We use the derived inequalities to bound the misclassification rate of the row and column clusters, respectively.

The remainder of the paper is organized as follows. Section \ref{model} presents the model for multi-layer directed networks and investigates its algebraic properties. Section \ref{methodandtheory} develops the debiased spectral co-clustering algorithm and proves its consistency. Section \ref{simulation} illustrates the finite sample performance of the proposed method via simulations. Section \ref{realdata} includes a real-world application of the proposed method to the WFAT dataset. Section \ref{conclusion} concludes the paper and provides possible extensions. Technical proofs are included in the Appendix.

\section{The multi-layer ScBM and its algebraic properties}\label{model}
In this section, we first present the multi-layer ScBM for modeling the multi-layer directed network. Next, we study its algebraic properties for understanding the population-wise clustering behavior of spectral co-clustering based on the SoG matrices.

\emph{Notes and notation:}
We use $[n]$ to denote the set $\{1,...,n\}$. For a matrix $M\in \mathbb R^{m\times n}$ and index $i\in [m]$ and $j\in [n]$, $M_{i\ast}$ and $M_{\ast j}$ denote the $i$th row and $j$th column of $M$, respectively. $\|M\|_F$,  $\|M\|_{\max}$, $\|M\|_{1, \infty}$, and $\|M\|_{2, \infty}$ denote the Frobenius norm, the element-wise maximum absolute value, the maximum row-wise $l_1$ norm and the maximum row-wise $l_2$ norm of a given matrix $M$, respectively. In addition, $\|\cdot\|_2$ denotes the Euclidean norm of a vector or the spectral norm of a matrix. We will use $c,c_0,c_1$, and more generally $c_i$, to denote constants that vary across different contexts.
Following convention, we will use clusters and communities interchangeably.

\subsection{Multi-layer ScBMs}
\label{multi-layer}
Consider the multi-layer directed network with $L$-layers and $n$ common nodes, whose adjacency matrices are denoted by $A_l$, where $A_l \in \{0, 1\}^{n\times n}$ for all $1 \leq l \leq L$. We assume that all the layers share common row and column clusters but with possibly different edge densities.  
In particular, suppose that $n$ nodes are assigned to $K_y$ non-overlapping row clusters and $K_z$ non-overlapping column clusters, respectively. The number of nodes in the row (column) cluster $k\in [K_y]$ ($k\in [K_z]$) is denoted by $n_k^y$ ($n_k^z$). For $i\in[n]$, the row (column) cluster assignment of node $i$ is given by $g_i^y\in [K_y]$ ($g_i^z\in[K_z]$). Given the cluster assignment, we assume the layer-wise networks are generated independently from the following ScBM \citep{rohe2016co}. That is, for any pair of nodes $i \neq j$ (with $j\in[n]$) and any layer $l\in[L]$, each $A_{l,ij}$ is generated independently according to
\begin{equation}
\label{adj_generate}
A_{l,ij}\sim {\rm Bernoulli}(\rho B_{l,g_i^y g_j^z}),
\end{equation}
where $\rho\in(0,1]$ is an overall edge density parameter, $B_l\in \mathbb [0,1]^{K_y\times K_z}$ denotes the \emph{heterogeneous} block  probability matrix indicating the community-wise edge probabilities in each $l$. While for any $l\in [L]$ and $i=j$, $A_{l,ii}=0$. It can be seen from \eqref{adj_generate} that nodes in a common row (column) cluster are stochastically equivalent senders (receivers) in the sense that they send out (receive) an edge to a third node with equal probabilities. Putting together the $L$ layer-wise networks $\{A_l\}_{l=1}^L$, we say that the multi-layer network is generated from the multi-layer ScBM.

Throughout this paper, we assume that the number of communities is fixed and the community sizes are balanced. Specifically, we make the following Assumption \ref{asmp1}.

\begin{myA}\label{asmp1}
Both the number of row clusters $K_y$ and the number of column clusters $K_z$ are fixed. The community sizes are balanced, that is, there exists a constant $c_0\geq1$ such that each row cluster size is in $[c_0^{-1}n/K_y, c_0n/K_y]$ and each column cluster size is in $[c_0^{-1}n/K_z, c_0n/K_z]$.
\end{myA}

\subsection{Algebraic properties of multi-layer ScBMs}
It is essential to investigate the algebraic properties of multi-layer ScBMs in order to understand the rationality of a clustering algorithm from the angle of population. Before that, we must specify the clustering algorithm.     

For the single-layer ScBM, \emph{spectral co-clustering} \citep{rohe2016co,guo2023randomized} is a popular and effective algorithm, which first computes the singular value decomposition (SVD) of a matrix, say the adjacency matrix $A$, and then implements $k$-means on the left and right singular vectors to obtain the row and column clusters, respectively. For the multi-layer ScBM, we also proceed to develop the spectral co-clustering-based method to detect the co-clusters. It is natural to use the summation matrix $\sum_{l=1}^LA_l$ as the input of spectral co-clustering. However, such direct summation may lead to cancellation of clusters. For example, suppose $B_1:= \begin{bmatrix} a & b \\ c & d  \end{bmatrix}$ and $B_2:= \begin{bmatrix} b & a \\ d & c \end{bmatrix}$, the sum $B_1 + B_2 = \begin{bmatrix} a+b & a+b\\ c+d& c+d  \end{bmatrix}$ results in identical columns and thus provides no signal for the column clusters. 
Considering that the $ij$th entry of $A_lA_l^T$ (resp. $A_l^TA_l$) counts the number of common children (resp. parents) nodes of nodes $i$ and $j$, we proceed to use the leading eigenvectors of SoG matrices $\sum_{l=1}^LA_lA_l^T$ (resp. $\sum_{l=1}^LA_l^TA_l$) as the input of subsequent $k$-means clustering, in order to obtain the row (resp. column) clusters of nodes with similar sending (resp. receiving) patterns. We will modify the algorithm in the next section.

In the sequel, we investigate the theoretical properties of the eigenvectors of the population version of $\sum_{l=1}^LA_lA_l^T$ and $\sum_{l=1}^LA_l^TA_l$. Before that, we give some notations. Let $Y\in \{0,1\}^{n\times K_y}$ and $Z\in \{0,1\}^{n\times K_z}$ be the row and column membership matrices, respectively, where each row consists of all 0's except one 1. In particular, $Y_{ig_i^y}=1$ and $Z_{ig_i^z}=1$ for each $i$. For $l\in[L]$, denote $\mathcal P_l=\rho YB_lZ^T \in [0,1]^{n \times n}$, it is easy to see that $\mathcal P_l$ serve as the population matrices for $A_l$, in the sense that $\mathcal P_l-{\rm diag}(\mathcal P_l)=\mathbb E(A_l)$. The subsequent lemmas indicate that the rows of the eigenvectors of $\sum_{l=1}^L \mathcal P_l\mathcal P_l^T$ and $\sum_{l=1}^L \mathcal P_l^T\mathcal P_l$ could reveal the true row and column clusters, respectively. The proofs of the lemmas are provided in Appendix \ref{appendixB}. We begin by considering the row clusters.

\begin{myLe}\label{row interpretation}
Consider the multi-layer ScBM parameterized by $(Y_{n\times K_y},Z_{n\times K_z},\rho \{B_l\}^L_{l=1})$. Suppose {\rm rank($\sum_{l=1}^LB_lB_l^T$)} = $K(K \leq K_y)$. Denote the eigendecomposition of $\sum_{l=1}^L \mathcal P_l\mathcal P_l^T$ by $U\Lambda^R U^T$, where $U$ is an $n\times K$ matrix with orthonormal columns and $\Lambda^R$ is a $K\times K$ diagonal matrix. Denote the eigendecomposition of $\Delta_y \sum_{l=1}^L B_l\Delta_z^2B_l^T \Delta_y$ by $Q^RD^R{Q^R}^T$, where $\Delta_y: = {\rm diag}(\sqrt{n_1^y}, \ldots, \sqrt{n_{K_y}^y})$ and $\Delta_z := {\rm diag}(\sqrt{n_1^z}, \ldots, \sqrt{n_{K_z}^z})$. Then we have 
	
	(a) If $\sum_{l=1}^LB_lB_l^T$ is of full rank, i.e., $K=K_y$, then $Y_{i\ast} = Y_{j\ast}$ if and only if $U_{i\ast} = U_{j\ast}$. Otherwise, for any $Y_{i\ast} \neq Y_{j\ast}$, we have $\|U_{i\ast} - U_{j\ast}\|_2 = \sqrt{n_{g_i^y}^{-1} + n_{g_j^y}^{-1}}$.
	
	(b) If $\sum_{l=1}^LB_lB_l^T$ is rank-deficient, i.e., $K<K_y$, then for $Y_{i\ast} = Y_{j\ast}$, we have $U_{i\ast} = U_{j\ast}$. Otherwise, if $\Delta_y^{-1}Q^R$ has mutually distinct rows and there exists a deterministic positive sequence $\{\zeta_n\}_{n\geq 1}$ such that
	\begin{equation}\label{row separable}
		\min_{1 \leq k \neq k' \leq K_y} \|\frac{Q^R_{k\ast}}{\sqrt{n_{k}}} - \frac{Q^R_{k'\ast}}{\sqrt{n_{k'}}}\|_2 \geq \zeta_n,
	\end{equation}
	then for any $Y_{i\ast} \neq Y_{j\ast}$, we have $\|U_{i\ast} - U_{j\ast}\|_2 \geq \zeta_n$. 
\end{myLe}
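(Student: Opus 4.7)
The plan is to exploit the factorization $\mathcal{P}_l = \rho Y B_l Z^T$ to expose the low-rank block structure of $\sum_{l=1}^L \mathcal{P}_l \mathcal{P}_l^T$ and read off the rows of its leading eigenvectors directly. First, using $Z^T Z = \Delta_z^2$, one writes
$$\sum_{l=1}^L \mathcal{P}_l \mathcal{P}_l^T \;=\; \rho^2 Y M Y^T, \qquad M := \sum_{l=1}^L B_l \Delta_z^2 B_l^T,$$
and observes that $M$ inherits the rank $K$ of $\sum_l B_l B_l^T$, since multiplication by the invertible $\Delta_z$ preserves rank. Inserting the eigendecomposition $\Delta_y M \Delta_y = Q^R D^R (Q^R)^T$ then yields
$$\sum_{l=1}^L \mathcal{P}_l \mathcal{P}_l^T \;=\; \rho^2 \bigl(Y \Delta_y^{-1} Q^R\bigr) D^R \bigl(Y \Delta_y^{-1} Q^R\bigr)^T.$$

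Second, I would verify that $V := Y \Delta_y^{-1} Q^R$ has orthonormal columns by computing
$$V^T V = (Q^R)^T \Delta_y^{-1} (Y^T Y) \Delta_y^{-1} Q^R = (Q^R)^T \Delta_y^{-1} \Delta_y^2 \Delta_y^{-1} Q^R = (Q^R)^T Q^R = I,$$
where $Y^T Y = \Delta_y^2$ is used. The display above is therefore a valid eigendecomposition. In the full-rank case $K = K_y$ this identifies $U$ with $V$, and in the rank-deficient case $K < K_y$ one restricts to the $K$ columns of $Q^R$ indexing the non-zero entries of $D^R$, yielding $U = Y \Delta_y^{-1} Q^R$ with $Q^R$ now its $K_y \times K$ submatrix. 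A key observation is then the explicit row formula
$$U_{i\ast} \;=\; \frac{1}{\sqrt{n_{g_i^y}}} \, Q^R_{g_i^y, \ast},$$
which drops straight out of the single-one-per-row structure of $Y$.

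Third, this row formula delivers both conclusions. For nodes with $Y_{i\ast} = Y_{j\ast}$, the right-hand side is identical, giving $U_{i\ast} = U_{j\ast}$ in both (a) and (b). For nodes in different row clusters, in part (a) the orthogonality of the $K_y \times K_y$ matrix $Q^R$ makes its rows orthonormal, so expanding $\|U_{i\ast} - U_{j\ast}\|_2^2$ yields the exact value $n_{g_i^y}^{-1} + n_{g_j^y}^{-1}$. In part (b) the rows of $Q^R$ are no longer orthonormal, but the assumed lower bound $\zeta_n$ on the row distances of $\Delta_y^{-1} Q^R$ translates directly to $\|U_{i\ast} - U_{j\ast}\|_2 \geq \zeta_n$.

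The main obstacle is a notational/identification issue in the rank-deficient case: I must argue that the $U$ appearing in the lemma, defined through the $n \times K$ eigendecomposition of $\sum_l \mathcal{P}_l \mathcal{P}_l^T$, is consistent with restricting $Y \Delta_y^{-1} Q^R$ to the eigenvectors of the non-zero eigenvalues of $D^R$, and that the hypothesis on distinct rows of $\Delta_y^{-1} Q^R$ is interpreted on this restricted matrix. Some care is also needed when non-zero eigenvalues of $D^R$ have multiplicities, since then $U$ is only specified up to an orthogonal rotation inside each eigenspace; however, both within-cluster equality and between-cluster row distances are invariant under such rotations, so the stated conclusions remain untouched.
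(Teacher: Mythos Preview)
Your proposal is correct and follows essentially the same route as the paper: factor $\sum_l \mathcal P_l\mathcal P_l^T=\rho^2 Y\Delta_y^{-1}\bigl(\Delta_y\sum_l B_l\Delta_z^2B_l^T\Delta_y\bigr)\Delta_y^{-1}Y^T$, insert the eigendecomposition $Q^R D^R (Q^R)^T$, check that $Y\Delta_y^{-1}Q^R$ has orthonormal columns so that $U=Y\Delta_y^{-1}Q^R$, and read off the row formula $U_{i\ast}=n_{g_i^y}^{-1/2}Q^R_{g_i^y\ast}$. Your added remarks on the rank-deficient restriction and on invariance under within-eigenspace rotations are sound and slightly more explicit than the paper's treatment.
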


\begin{myRmk}
    Note that in the literature on ScBM, the row cluster number $K_y$, the column cluster number $K_z$ and the rank $K$ of population matrix are commonly coupled, say, it is often assumed that $K=K_y\leq K_z$ or $K=K_z\leq K_y$ \citep{rohe2016co}. We here relax this assumption and make the model more practical. 
\end{myRmk}

Lemma \ref{row interpretation} shows that when two nodes are in the same row cluster, the corresponding rows of $U$ coincide. Conversely, if the nodes do not belong to the same row cluster, a gap is present between their corresponding rows in $U$. As we will see, this brings confidence to the success of the spectral co-clustering using the sample version of $\sum_{l=1}^L \mathcal P_l\mathcal P_l^T$. Note that in the rank-deficient case, we additionally require \eqref{row separable} holds to ensure that two rows of $U$ are separable for nodes with different row clusters. 

It is desirable to study the sufficient and interpretable condition to satisfy \eqref{row separable}. Define a flattened block  probability matrix $B^R := [B_1, \dots, B_L]\in [0,1]^{K_y\times L K_z}$, where the $k$th row contains the overall sending pattern of the $k$th row cluster to each of the column clusters across all layers. The following lemma provides an explicit condition on $B^R$ such that (\ref{row separable}) holds. 
\begin{myLe}\label{row separable condition}
	Under the same multi-layer ScBM as in Lemma {\rm \ref{row interpretation}} and Assumption {\rm \ref{asmp1}}, if $B^R$ satisfies
	\begin{equation}
 \label{euclidean}
		\min_{1 \leq k \neq k' \leq K_y}  c_0^{-1} L^{-1}  \langle B^R_{k\ast}, B^R_{k\ast} \rangle  +  c_0^{-1} L^{-1}  \langle B^R_{k'\ast}, B^R_{k'\ast} \rangle  - 2c_0L^{-1}   \langle B^R_{k\ast}, B^R_{k'\ast} \rangle  \geq c_0^2n\zeta_n^2,
	\end{equation}
then condition \eqref{row separable} is met. Here $\zeta_n$ is the RHS of \eqref{row separable} and $c_0 \geq 1$ is the constant specified in Assumption {\rm \ref{asmp1}}.
\end{myLe}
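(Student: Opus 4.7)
The plan is to translate the row-separation bound \eqref{row separable} into a quadratic form in the rows of $B^R$ and then show that \eqref{euclidean} supplies the required control. First I observe the identity
$$\|Q^R_{k\ast}/\sqrt{n_k^y} - Q^R_{k'\ast}/\sqrt{n_{k'}^y}\|_2^2 = v^T P_M v,$$
where $v := e_k/\sqrt{n_k^y}-e_{k'}/\sqrt{n_{k'}^y}$ and $P_M := Q^R(Q^R)^T$ is the orthogonal projection onto the column space of $M=\Delta_y\sum_l B_l\Delta_z^2 B_l^T\Delta_y$. Because $I_L\otimes\Delta_z^2$ is positive definite, this column space equals that of $\Delta_y B^R$, and the standard projection formula gives $P_M=\Delta_y B^R\, A^+ (B^R)^T\Delta_y$ with $A:=(B^R)^T\Delta_y^2 B^R$. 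Using $\Delta_y v=e_k-e_{k'}$, a short computation yields the key identity
$$v^T P_M v = w\, A^+ w^T, \qquad w := B^R_{k\ast}-B^R_{k'\ast}.$$

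Next, the range of $A$ equals the row space of $B^R$ (since $\Delta_y^2$ is positive definite), and $w^T=(B^R)^T(e_k-e_{k'})$ lies in this range. I would then apply the Cauchy--Schwarz-type inequality $z^T A^+ z \ge \|z\|_2^4/(z^T A z)$, valid for $z$ in the range of $A$, with $z=w^T$. This reduces the problem to upper-bounding
$$w\, A\, w^T = \sum_{j=1}^{K_y} n_j^y \langle w, B^R_{j\ast}\rangle^2,$$
which, via $\max_j n_j^y \le c_0 n/K_y$ from Assumption~\ref{asmp1} together with the PSD bound $\|Hx\|_2^2 \le \|H\|_2\cdot x^T H x$ applied to $H := B^R(B^R)^T = \sum_l B_l B_l^T$ and $x=e_k-e_{k'}$ (so $x^T H x = \|w\|_2^2$), yields $w A w^T \le (c_0 n/K_y)\|H\|_2\|w\|_2^2$, and hence $v^T P_M v \ge K_y\|w\|_2^2/(c_0 n\|H\|_2)$.

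Finally, the asymmetric $c_0^{-1}$ and $c_0$ coefficients in \eqref{euclidean} arise by applying Assumption~\ref{asmp1} in two directions: $1/n_k^y \ge K_y/(c_0 n)$ attaches the $c_0^{-1}$ factors to the diagonal contributions to $\|\cdot\|_2^2$ corresponding to $\|B^R_{k\ast}\|_2^2$ and $\|B^R_{k'\ast}\|_2^2$, while $1/\sqrt{n_k^y n_{k'}^y} \le c_0 K_y/n$ attaches the $c_0$ factor to the cross contribution corresponding to $\langle B^R_{k\ast},B^R_{k'\ast}\rangle$. Combined with a spectral bound on $H$ (e.g., $\|H\|_2 \le \mathrm{tr}(H) = \sum_k \|B^R_{k\ast}\|_2^2$) to express $\|H\|_2$ through the rows of $B^R$, rearrangement identifies \eqref{euclidean} as a sufficient condition forcing $v^T P_M v \ge \zeta_n^2$. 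The main obstacle will be the careful simultaneous tracking of the constants $c_0^{\pm 1}$, $K_y$, $K_z$, and $L$ through the pseudoinverse and Cauchy--Schwarz steps, so that the particular asymmetric form of \eqref{euclidean} emerges cleanly and the sign of the cross term $\langle B^R_{k\ast},B^R_{k'\ast}\rangle$ is handled uniformly.
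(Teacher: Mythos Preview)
Your route through the pseudoinverse identity $v^T P_M v = w A^+ w^T$ (with $w=B^R_{k*}-B^R_{k'*}$) and the Cauchy--Schwarz inequality $z^TA^+z\ge\|z\|_2^4/(z^TAz)$ is correct as far as it goes, but the paper's argument is much shorter and structurally different. The paper simply uses the eigenvalue upper bound $D^R_{kk}\le c_0^2 Ln^2$ (an easy consequence of Assumption~\ref{asmp1}) to write
\[
c_0^2 Ln^2\cdot v^T Q^R(Q^R)^T v \;\ge\; v^T Q^R D^R(Q^R)^T v \;=\; (e_k-e_{k'})^T\,\mathbb{B}^R\,(e_k-e_{k'}),\qquad \mathbb{B}^R:=\sum_{l}B_l\Delta_z^2B_l^T,
\]
so that no pseudoinverse or Cauchy--Schwarz step is needed at all. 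The right side equals $\mathbb{B}^R_{kk}+\mathbb{B}^R_{k'k'}-2\mathbb{B}^R_{kk'}$, and the entrywise bounds $c_0^{-1}\frac{n}{K_z}\bigl[\sum_l B_lB_l^T\bigr]_{ab}\le \mathbb{B}^R_{ab}\le c_0\frac{n}{K_z}\bigl[\sum_l B_lB_l^T\bigr]_{ab}$ (from Assumption~\ref{asmp1} applied to $\Delta_z^2$, using that all entries are nonnegative) immediately deliver the asymmetric $c_0^{-1}/c_0$ pattern of \eqref{euclidean}.

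There is a genuine disconnect in your final paragraph. Once you collapse $v^T P_M v$ to $wA^+w^T$, the sizes $n^y_k$ sit \emph{inside} $A$, not as separate prefactors $1/n^y_k$ and $1/\sqrt{n^y_kn^y_{k'}}$ on diagonal and cross contributions; hence the heuristic ``$1/n^y_k\ge K_y/(c_0n)$ attaches $c_0^{-1}$ to the diagonal terms'' does not apply to your expression---it is really the paper's entrywise argument, not yours. Your Cauchy--Schwarz chain instead yields a \emph{symmetric} sufficient condition of the shape $\|w\|_2^2\ge c\,n\|H\|_2\zeta_n^2/K_y$. You can still close the proof from there, but only via the extra observation that \eqref{euclidean}, together with $c_0\ge1$ and $\langle B^R_{k*},B^R_{k'*}\rangle\ge0$, forces $\|w\|_2^2\ge c_0^2Ln\zeta_n^2$; this implication is the missing link and should replace your last paragraph rather than be grafted onto it.
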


Lemma \ref{row separable condition} is interpretable in that it requires certain difference between any two row pairs in $B^R$. To see this more clearly, if the column clusters are absolutely balanced, then it turns out that $c_0=1$ and the LHS of \eqref{euclidean} is equivalent to the Euclidean distance (divided by $L$) of two different rows of $B^R$.

\begin{myRmk}
It is worth mentioning that we only require the overall difference of each row cluster pair. Hence, some layers with weak cluster signal can borrow the strength from other layers with strong cluster signal, which shows the benefit of combining the layer-wise networks. 
\end{myRmk}

The aforementioned results focus on the row clusters. For the column clusters, we can derive similar results if we study instead $\sum_{l=1}^L \mathcal P_l^T\mathcal P_l$. 

\begin{myLe}\label{column interpretation}
	Under the same multi-layer ScBM as in Lemma {\rm \ref{row interpretation}} and suppose {\rm rank}$(\sum_{l=1}^LB_l^TB_l)$ = $K^\prime(K^\prime \leq K_z)$. Denote the eigendecomposition of $\sum_{l=1}^L \mathcal P_l^T\mathcal P_l$ by $V\Lambda^CV^T$, where $V$ is an $n\times K^\prime$ matrix with orthonormal columns and $\Lambda^C$ is a $K^\prime\times K^\prime$ diagonal matrix. Denote the eigendecomposition of $\Delta_z \sum_{l=1}^L B_l^T\Delta_y^2B_l \Delta_z$ by $Q^C D^C {Q^C}^T$. Then we have
	
		(a) If $\sum_{l=1}^LB_l^TB_l$ is of full rank, i.e., $K^\prime = K_z$, then $Z_{i\ast} = Z_{j\ast}$ if and only if $V_{i\ast} = V_{j\ast}$. Otherwise, for any $Z_{i\ast} \neq Z_{j\ast}$, we have $\|V_{i\ast} - V_{j\ast}\|_2 = \sqrt{n_{g_i^z}^{-1} + n_{g_j^z}^{-1}}$.
		
		(b) If $\sum_{l=1}^LB_l^TB_l$ is rank-deficient, i.e., $K^\prime <K_z$, then for $Z_{i\ast} = Z_{j\ast}$, we have $V_{i\ast} = V_{j\ast}$. Otherwise, if $\Delta_z^{-1}Q^C$ has mutually distinct rows and there exists a deterministic positive sequence $\{\xi_n\}_{n\geq 1}$ such that
	\begin{equation}\label{column separable}
		\min_{1 \leq k \neq k' \leq K_z} \|\frac{Q^C_{k\ast}}{\sqrt{n_{k}}} - \frac{Q^C_{k' \ast}}{\sqrt{n_{k'}}}\|_2 \geq \xi_n,
	\end{equation}
	then for any $Z_{i\ast} \neq Z_{j\ast}$, we have $\|V_{i\ast} - V_{j\ast}\|_2 \geq \xi_n$. 
\end{myLe}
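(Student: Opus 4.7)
The plan is to obtain Lemma \ref{column interpretation} by a direct parallel of Lemma \ref{row interpretation}, applied to the transposed population matrices. The key identity is
\[
\sum_{l=1}^L \mathcal P_l^T \mathcal P_l
= \rho^2 Z \Big(\sum_{l=1}^L B_l^T Y^T Y B_l\Big) Z^T
= \rho^2 Z \Big(\sum_{l=1}^L B_l^T \Delta_y^2 B_l\Big) Z^T,
\]
using $\mathcal P_l = \rho Y B_l Z^T$ and $Y^T Y = \Delta_y^2$. This swaps the roles of $(Y,\Delta_y,\{B_l\})$ and $(Z,\Delta_z,\{B_l^T\})$ compared with the row-cluster analysis, so the entire algebraic scaffolding of Lemma \ref{row interpretation} transfers verbatim.

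First I would set $\tilde Z := Z \Delta_z^{-1}$, which has orthonormal columns since $Z^T Z = \Delta_z^2$. Rewriting gives
\[
\sum_{l=1}^L \mathcal P_l^T \mathcal P_l
= \rho^2 \tilde Z \Big(\Delta_z \sum_{l=1}^L B_l^T \Delta_y^2 B_l \Delta_z\Big) \tilde Z^T
= \rho^2 \tilde Z Q^C D^C (Q^C)^T \tilde Z^T,
\]
after plugging in the eigendecomposition of the $K_z \times K_z$ middle matrix. Since $\tilde Z Q^C$ has orthonormal columns, it diagonalizes the left-hand side; modulo the usual rotation ambiguity within repeated-eigenvalue subspaces, the eigenvector matrix is $V = \tilde Z Q^C$, restricted to the $K'$ columns of $Q^C$ corresponding to nonzero eigenvalues of $D^C$.

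Next I would read off the per-node identity. Because $(ZM)_{i\ast} = M_{g_i^z, \ast}$ for any matrix $M$, we obtain $V_{i\ast} = Q^C_{g_i^z, \ast}/\sqrt{n_{g_i^z}}$, so the implication $Z_{i\ast} = Z_{j\ast} \Rightarrow V_{i\ast} = V_{j\ast}$ is immediate in both (a) and (b). For part (a), when $K' = K_z$ the matrix $Q^C$ is square orthogonal, so $\|Q^C_{k\ast}\|_2 = 1$ and $\langle Q^C_{k\ast}, Q^C_{k'\ast}\rangle = 0$ for $k \neq k'$; a direct expansion of $\|V_{i\ast} - V_{j\ast}\|_2^2$ then yields exactly $n_{g_i^z}^{-1} + n_{g_j^z}^{-1}$. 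For part (b), $Q^C$ is only $K_z \times K'$ with $K' < K_z$, so the rows of $\Delta_z^{-1} Q^C$ are not automatically separated; the hypothesis \eqref{column separable} supplies the explicit lower bound $\xi_n$ that gives $\|V_{i\ast} - V_{j\ast}\|_2 \geq \xi_n$ for nodes in distinct column clusters.

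The only subtle point, exactly as in Lemma \ref{row interpretation}, is the non-uniqueness of $V$ when $\sum_{l=1}^L \mathcal P_l^T \mathcal P_l$ has repeated nonzero eigenvalues: $V$ is then determined only up to an orthogonal rotation within each eigenspace. Such rotations preserve both row-wise equalities and Euclidean distances between rows, so the stated conclusions about $V_{i\ast}$ are invariant under this ambiguity and no additional argument is needed. The rest is routine linear algebra mirroring the row-cluster proof, and no new probabilistic machinery is required since the statement concerns only population matrices.
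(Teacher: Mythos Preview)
Your proposal is correct and follows essentially the same route as the paper: the paper's proof of Lemma~\ref{column interpretation} likewise establishes the identity $V = Z\Delta_z^{-1}Q^C$ by writing $\sum_l \mathcal P_l^T\mathcal P_l = \rho^2 (Z\Delta_z^{-1})\,Q^C D^C (Q^C)^T\,(Z\Delta_z^{-1})^T$ and then defers the remainder to the argument of Lemma~\ref{row interpretation}. Your explicit discussion of the rotation ambiguity under repeated eigenvalues is a nice clarification that the paper leaves implicit, but the core argument is identical.
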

Lemma \ref{column interpretation} shows that the leading eigenvectors of $\sum_{l=1}^L \mathcal P_l^T\mathcal P_l$ can expose the true underlying column clusters, where when $\sum_{l=1}^LB_l^TB_l$ is rank-deficient, we need extra condition (\ref{column separable}). In the following Lemma \ref{column separable condition}, we provide a sufficient condition under which (\ref{column separable}) holds. In particular, when $\sum_{l=1}^LB_l^TB_l$ is rank-deficient, we provide an interpretable condition on the flattened matrix $B^C := [B_1^T, \ldots, B_L^T] \in [0,1]^{K_z\times L K_y}$ which is sufficient for (\ref{column separable}). 

\begin{myLe}\label{column separable condition}
	Under the same multi-layer ScBM as in Lemma {\rm \ref{column interpretation}} and Assumption {\rm \ref{asmp1}}, if  $B^C$ satisfies
 	\begin{equation*}
		\min_{1 \leq k \neq k' \leq K_z} c_0^{-1} L^{-1}\langle B^C_{k\ast}, B^C_{k\ast} \rangle  + c_0^{-1} L^{-1}\langle B^C_{k'\ast}, B^C_{k'\ast}\rangle  - 2c_0L^{-1}\langle B^C_{k\ast}, B^C_{k'\ast} \rangle  \geq c_0^2n\xi_n^2,
	\end{equation*}
then \eqref{column separable} holds. Here $\xi_n$ is the RHS of \eqref{column separable} and $c_0 \geq 1$ is the constant specified in Assumption {\rm \ref{asmp1}}.
\end{myLe}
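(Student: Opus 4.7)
My plan is to mirror the proof of Lemma \ref{row separable condition}, exploiting the obvious row/column symmetry of the multi-layer ScBM. In fact, the shortest route is to apply Lemma \ref{row separable condition} directly to the ``transposed'' model obtained by replacing each $B_l$ with $B_l^T$ and swapping the row and column cluster labels: under this transposition the flattened block probability matrix becomes exactly $B^C$, the matrix $\Delta_y \sum_l B_l \Delta_z^2 B_l^T \Delta_y$ becomes $\Delta_z \sum_l B_l^T \Delta_y^2 B_l \Delta_z$ whose eigenvectors form $Q^C$, and the row-separability inequality for the transposed model is precisely the column-separability condition \eqref{column separable}. Since Assumption \ref{asmp1} is symmetric in the row and column roles (sharing the same constant $c_0$), the hypothesis of Lemma \ref{row separable condition} applied to the transposed model is exactly the hypothesis stated here, and the conclusion transfers immediately.

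For completeness I would also record the direct argument, in parallel with the proof of Lemma \ref{row separable condition}. Recalling from the proof of Lemma \ref{column interpretation} that $V = Z \Delta_z^{-1} Q^C$, so that for $g_i^z = k \neq k' = g_j^z$ one has $\|V_{i\ast} - V_{j\ast}\|_2^2 = v^T P v$ with $v := \Delta_z^{-1}(e_k - e_{k'})$ and $P := Q^C (Q^C)^T$, I would proceed in three steps. First, identify $P$ as the orthogonal projection onto the range of the symmetric positive semidefinite matrix $M := \Delta_z \bigl(\sum_l B_l^T \Delta_y^2 B_l\bigr)\Delta_z$ and apply the elementary inequality $v^T P v \geq v^T M v / \|M\|_2$, which follows from decomposing $v = Pv + (I-P)v$ together with $M(I-P) = 0$ and the orthogonality of $\mathrm{range}(M)$ and $\mathrm{null}(M)$. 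Second, expand
\begin{equation*}
v^T M v \;=\; C'_{kk} + C'_{k'k'} - 2\,C'_{kk'}, \qquad C'_{kk'} \;=\; \sum_{l,j} n_j^y (B_l)_{jk}(B_l)_{jk'},
\end{equation*}
and use the balanced-sizes bound $n_j^y \in [c_0^{-1} n/K_y,\, c_0 n/K_y]$ from Assumption \ref{asmp1} to sandwich each entry of $C'$ in terms of inner products of rows of $B^C$. Third, bound the spectral norm via $\|M\|_2 \leq \|\Delta_z\|_2^2 \sum_l \|B_l\|_2^2 \|\Delta_y\|_2^2 \lesssim L n^2$, absorbing the fixed constants $K_y, K_z$ and $\|B_l\|_2^2 \leq K_y K_z$ into the generic constant.

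Combining these ingredients, the desired lower bound $\|V_{i\ast} - V_{j\ast}\|_2^2 \geq \xi_n^2$ will follow directly from the hypothesis of the lemma. The proof is essentially linear-algebraic bookkeeping; the only non-routine ingredient is the projection inequality in the first step, and the main care needed is to consistently absorb the fixed cluster numbers $K_y, K_z$ into the constant $c_0$ so that the final bound has the precise form $c_0^2 n \xi_n^2$ stated in the lemma.
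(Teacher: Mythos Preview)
Your proposal is correct and follows essentially the same route as the paper: the transposition argument is exactly the symmetry the paper invokes when it says the proof ``follows the same strategy as that of Lemma~\ref{row separable condition},'' and your direct argument matches the paper's step by step---your projection inequality $v^T P v \geq v^T M v / \|M\|_2$ is precisely the paper's bound $\mu_n^c\,\|V_{i\ast}-V_{j\ast}\|_2^2 \geq \sum_k D^C_{kk}\bigl(\tfrac{Q^C_{g_i^z k}}{\sqrt{n_{g_i^z}}}-\tfrac{Q^C_{g_j^z k}}{\sqrt{n_{g_j^z}}}\bigr)^2$ with $\mu_n^c=c_0^2Ln^2$ playing the role of $\|M\|_2$, and your $C'$ is the paper's $\mathbb{B}^C=\sum_l B_l^T\Delta_y^2 B_l$. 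One small caution: $c_0$ in the statement is the fixed balance constant from Assumption~\ref{asmp1}, not a generic constant into which $K_y,K_z$ can be freely absorbed, so when you track the constants you should verify (as the paper does implicitly) that the $K_y,K_z$ factors cancel rather than merely being ``absorbed.''
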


\section{Debiased spectral co-clustering and its consistency}
\label{methodandtheory}
In this section, we formally present the spectral-co-clustering algorithm based on the SoG matrices, where we  provide a bias-adjustment strategy to remove the bias of the SoG matrix. Then we study its theoretical properties in terms of misclassification rate. 

\subsection{Debiased spectral co-clustering}
We begin by considering the row clusters. 
As illustrated in Section \ref{model}, we have shown that $\sum_{l=1}^L A_lA_l^T$ is a good surrogate of $\sum_{l=1}^L A_l$ for avoiding possible row clustering cancellation, and we provide theoretical support that the population-wise matrix $\sum_{l=1}^L \mathcal P_l\mathcal P_l^T$ has eigenvectors that can reveal the true underlying row clusters, recalling that $\mathcal P_l -{\rm diag} (\mathcal P_l)= \mathbb E(A_l)$. However, similar to the undirected case studied in \citet{LeiJ2022Bias}, we will see that $\sum_{l=1}^L A_lA_l^T$ turns out to be a biased estimate of $\sum_{l=1}^L \mathcal P_l\mathcal P_l^T$. 

For notational simplicity, denote $\bar P_l := \mathcal P_l -{\rm diag} (\mathcal P_l)$ and denote $X_l := A_l - \bar P_l.$ Then we can decompose the deviation of $\sum_{l=1}^L A_lA_l^T$ from $\sum_{l=1}^L \mathcal P_l\mathcal P_l^T$ as
 \begin{eqnarray}\label{sumAA^T}
    \sum_{l=1}^L A_lA_l^T-\sum_{l=1}^L\mathcal P_l\mathcal P_l^T:=N_1+N_2+N_3+\mbox{diag}(\sum_{l=1}^L X_lX_l^T),
\end{eqnarray}
where 
\begin{align}
\label{decomposition}
N_1 & =  \sum_{l=1}^L\left(\mbox{diag}^2(\mathcal P_l) - \mathcal P_l\mbox{diag}(\mathcal P_l) - \mbox{diag}(\mathcal P_l)\mathcal P_l^T\right),  \nonumber \\
	N_2 & = \sum_{l=1}^L(\bar P_lX_l^T + X_l\bar P_l^T), \nonumber \\
	N_3 & =  \sum_{l=1}^LX_lX_l^T - \mbox{diag}(\sum_{l=1}^LX_lX_l^T).
\end{align}
The terms $N_1$, $N_2$ and $N_3$ are all relatively small. While for $\mbox{diag}(\sum_{l=1}^L X_lX_l^T)$, we have the following argument for its $i$th diagonal element, 
\begin{eqnarray}\label{S2ii}
	(\mbox{diag}(\sum_{l=1}^LX_lX_l^T))_{ii} & = & \sum_{l=1}^L\sum_{j=1}^n \bar{P}_{l,ij}^2\mathbb{I}(A_{l,ij}=0) + \sum_{l=1}^L\sum_{j=1}^n (1 - \bar{P}_{l,ij})^2 \mathbb{I}(A_{l,ij}=1) \nonumber\\
	& \leq & Ln \max_{l,ij}\bar{P}_{l,ij}^2 + \sum_{l=1}^Ld_{l,i}^{out},
\end{eqnarray}
where $d_{l,i}^{out}:=\sum_{j=1}^n A_{l,ij}$ is the out-degree of node $i$ in layer $l$. Note that the expectation of $\sum_{l=1}^L d_{l,i}^{out}$ is of the same order as $ Ln\max_{l,ij}\bar{P}_{l,ij}$, which dominates the first term $Ln \max_{l,ij}\bar{P}_{l,ij}^2$ under the sparse regime where $\bar{P}_{l,ij} = o(1)$. As a result, to reduce the upper bound, we can directly minus $\sum_{l=1}^Ld_{l,i}^{out}$ from both sides of \eqref{S2ii}. Specifically, define the row-wise bias-adjusted SoG matrix by
\begin{equation}\label{S^R}
	S^R = \sum_{l=1}^L(A_lA_l^T - D_l^{out}),
\end{equation}
where $D_l^{out} = \mbox{diag}(d_{l,1}^{out}, \ldots, d_{l,n}^{out})$. Then, the row clusters partition can be obtained by performing $k$-means on the row of the $K$ leading eigenvectors of $S^R$.

Similarly, for the column clusters, we can define the column-wise bias-adjusted SoG matrix by 
\begin{equation}\label{S^C}
	S^C = \sum_{l=1}^L(A_l^TA_l - D_l^{in}),
\end{equation}
where $D_l^{in} = \mbox{diag}(d_{l,1}^{in}, \ldots, d_{l,n}^{in})$ with $d_{l,j}^{in}:=\sum_{i=1}^n A_{l,ij}$. The column clusters partition can then be obtained by applying $k$-means on the row of the $K'$ leading eigenvectors of $S^C$.

We summarize the spectral co-clustering based on the \textsf{d}ebiased \textsf{SoG} matrices in Algorithm \ref{alg1}, and in what follows, we will refer to the algorithm by \textsf{DSoG}. 

\begin{algorithm}
	\renewcommand{\algorithmicrequire}{\textbf{Input:}}
	\renewcommand{\algorithmicensure}{\textbf{Output:}}
	\caption{Spectral co-clustering based on the \textbf{d}ebiased \textbf{s}um \textbf{o}f \textbf{G}ram matrices (DSoG)}
	\label{alg1}
	\begin{algorithmic}[1]
		\REQUIRE Adjacency matrices $A_1, \ldots, A_L$,  row cluster number $K_y$,  column cluster number $K_z$, target ranks $K(K\leq K_y)$ and $K'(K'\leq K_z)$;
		\ENSURE Estimated row membership matrix $\widehat{Y}$, and column membership matrix $\widehat{Z}$;
		
        \STATE Find the $K$ leading eigenvectors $\widehat{U}$ of $S^R$ in (\ref{S^R}), and the $K'$ leading eigenvectors $\widehat{V}$ of $S^C$ in (\ref{S^C}).
        \STATE Treat each row of $\widehat{U}$ as a point in $\mathbb{R}^{n\times K}$ and run the $k$-means with $K_y$ clusters.\\ Treat each row of $\widehat{V}$ as a point in $\mathbb{R}^{n\times K'}$ and run the $k$-means with $K_z$ clusters.
	\end{algorithmic}  
\end{algorithm}

\subsection{Consistency}
We measure the quality of a clustering algorithm by the misclassification rate. Specifically, for the row clusters, it is defined as
\begin{equation}\label{misclustering rate}
	\mathcal{L}(Y,\widehat{Y}) = \min_{\Psi \in \bm{\Psi}_{K_y}} \frac{1}{n}\|\widehat{Y}\Psi - Y\|_0,
\end{equation} 
where $\widehat{Y}\in \{0,1\}^{n\times K_y}$ and $Y$ correspond to the estimated and true membership matrices with respect to the row clusters, respectively, and $\bm{\Psi}_{K_y}$ is the set of all $K_y \times K_y$ permutation matrices. Similarly, we can define the misclassification rate with respect to the column clusters by $\mathcal{L}(Z,\widehat{Z})$ .

To establish theoretical bounds on the misclassification rates, we need to pose assumption on the $B_l$ matrices. We consider the rather general case where the aggregated squared block  probability matrices $\sum_{l=1}^LB_lB_l^T$ and $\sum_{l=1}^LB_l^TB_l$ are allowed to be rank-deficient, where only a linear growth of their minimum \emph{non-zero} eigenvalue is required. Specifically, we have the following Assumption \ref{asmp2}.

\begin{myA}\label{asmp2}
	The $K$th $(K \leq K_y)$ non-zero eigenvalue of $\sum_{l=1}^LB_lB_l^T$ and the $K^\prime$th $(K^\prime \leq K_z)$ {non-zero} eigenvalue of $\sum_{l=1}^LB_l^TB_l$ are at least $c_1L$ for some constant $c_1>0$.
\end{myA}

\begin{myRmk}
    Compared with literature on multi-layer SBMs, see for example \citet{arroyo2021inference,LeiJ2022Bias}, Assumption {\rm \ref{asmp2}} is much weaker. On the one hand, we do not require each $B_l$ being of full rank, which is the benefit of combining layer-wise networks. On the other hand, the combined block  probability matrix $\sum_{l=1}^LB_lB_l^T$ is also flexible to be degenerate. 
\end{myRmk}

The following theorem provides an upper bound on the proportion of misclustered nodes in terms of row clusters under the multi-layer ScBM mentioned in Lemma \ref{row interpretation}.

\begin{myThm}\label{row_thm}
Suppose that Assumptions {\rm \ref{asmp1}} and {\rm \ref{asmp2}}, and {\rm (\ref{row separable})} hold. If  $L^{1/2}n\rho \geq c_2\log(L+n)$ and $n\rho \leq c_3$ for positive constants $c_2$ and $c_3$, then the output 
$\widehat{Y}$ of Algorithm {\rm \ref{alg1}} satisfies  
	\begin{equation}\label{rowcluster_bound}
		\mathcal{L}(Y,\widehat{Y}) \leq \frac{c_4}{n{\zeta_n}^2}\left(\frac{1}{n^2} + \frac{\log(L+n)}{Ln^2\rho^2}\right)
	\end{equation}
with probability at least $1-O((L+n)^{-1})$ for some constant $c_4>0$, where recall that $\zeta_n$ appears in  \eqref{row separable}.
\end{myThm}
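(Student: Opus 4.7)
The plan is to prove Theorem \ref{row_thm} in three stages: (i) a spectral-norm concentration bound for $\|S^R - \sum_{l=1}^L \mathcal P_l\mathcal P_l^T\|_2$; (ii) a Davis--Kahan-type transfer of this bound into a Frobenius perturbation bound $\|\widehat U O - U\|_F$ for some orthogonal $O \in \mathbb R^{K\times K}$, using the eigengap $\lambda_K(\sum_l \mathcal P_l \mathcal P_l^T) \geq c Ln^2\rho^2$ that follows by combining Assumption \ref{asmp2} with the balanced-community Assumption \ref{asmp1} and the definition $\mathcal P_l = \rho Y B_l Z^T$; and (iii) a standard $k$-means perturbation lemma (in the style of Lei and Rinaldo) that converts eigenvector closeness into a misclassification-rate bound, exploiting the row-separation $\zeta_n$ of the population $U$ established in Lemma \ref{row interpretation}(b).

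For step (i), I will work from decomposition (\ref{sumAA^T}) and bound the four pieces $N_1$, $N_2$, $N_3$, and the debiased diagonal $\mbox{diag}(\sum_l X_lX_l^T) - \sum_l D_l^{out}$ separately. The deterministic term $N_1$ has spectral norm $O(Ln\rho^2)$ since $\|\mathcal P_l\|_2 = O(n\rho)$ and $\|\mbox{diag}(\mathcal P_l)\|_2 = O(\rho)$; once squared and divided by the eigengap this contributes exactly the $1/n^2$ summand in (\ref{rowcluster_bound}). The cross term $N_2 = \sum_l(\bar P_l X_l^T + X_l \bar P_l^T)$ is a sum of $2L$ independent zero-mean matrices, so matrix Bernstein applies directly. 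The debiased diagonal reduces to a sum of independent scalars and is controlled by a scalar Bernstein bound; the sparsity condition $n\rho \leq c_3$ ensures the out-degrees $d_{l,i}^{out}$ genuinely dominate the $Ln\max \bar P_{l,ij}^2$ term in (\ref{S2ii}), making the correction in (\ref{S^R}) effective.

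The main obstacle is controlling the purely quadratic $N_3$, whose off-diagonal $(i,j)$ entry is $\sum_l \sum_k X_{l,ik}X_{l,jk}$, bilinear in \emph{dependent} copies of $X_l$, so standard matrix Bernstein fails. Following the strategy of \citet{LeiJ2022Bias}, I would invoke the decoupling inequality of \citet{de1995decoupling} to replace this by a bilinear form in \emph{independent} copies $X_l$ and $X_l'$. Conditioning on one copy turns the quadratic into a linear sum of independent mean-zero matrices, to which matrix Bernstein applies; after integrating out the other copy and using $L^{1/2}n\rho \geq c_2\log(L+n)$ to put the bound in the sub-Gaussian-dominated regime, one obtains $\|N_3\|_2 = O(\sqrt{Ln^2\rho^2\log(L+n)})$ with probability at least $1 - O((L+n)^{-1})$. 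This dominates the contributions of $N_2$ and the debiased diagonal, so that the overall stochastic error is of the same order.

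Putting things together, the spectral-norm bound $\|S^R - \sum_l \mathcal P_l\mathcal P_l^T\|_2 = O(Ln\rho^2 + \sqrt{Ln^2\rho^2\log(L+n)})$ feeds into the Yu--Wang--Samworth form of Davis--Kahan to give $\|\widehat U O - U\|_F^2 \leq cK \|S^R - \sum_l \mathcal P_l\mathcal P_l^T\|_2^2 / (Ln^2\rho^2)^2$, whose two summands match the $1/n^2$ and $\log(L+n)/(Ln^2\rho^2)$ terms in (\ref{rowcluster_bound}). Finally, since Lemma \ref{row interpretation}(b) ensures any two rows of $U$ from distinct row clusters are separated by at least $\zeta_n$, a standard $k$-means perturbation lemma bounds the number of misclustered nodes by $O(\|\widehat U O - U\|_F^2/\zeta_n^2)$; dividing by $n$ yields exactly the bound (\ref{rowcluster_bound}), with the advertised failure probability from a union bound over the Bernstein events.
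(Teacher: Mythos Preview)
Your proposal is correct and follows essentially the same route as the paper: the same signal--noise decomposition $S^R=\sum_l\mathcal P_l\mathcal P_l^T+N_1+N_2+N_3+N_4$, the eigengap bound $\lambda_K\geq cLn^2\rho^2$ from Assumptions~\ref{asmp1}--\ref{asmp2}, decoupling \`a la \citet{de1995decoupling} and \citet{LeiJ2022Bias} for the quadratic term $N_3$, Davis--Kahan, and the Lei--Rinaldo $k$-means lemma with the separation $\zeta_n$ from Lemma~\ref{row interpretation}. Two small imprecisions worth cleaning up: $\bar P_lX_l^T$ and $X_l\bar P_l^T$ share the same $X_l$, so $N_2$ is not a sum of $2L$ independent matrices (decompose at the entry level or use the paper's Lemma~\ref{N2_lemma} instead); and the debiased diagonal $N_4$ is bounded deterministically by $Ln\rho^2$ via (\ref{S2ii}) rather than by scalar Bernstein, so it contributes to the $1/n^2$ summand alongside $N_1$ rather than being absorbed into the $N_3$ term.
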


The proof of Theorem  \ref{row_thm} is given in Appendix \ref{appendixB}. Theorem \ref{row_thm} shows that in certain sense, the number of layers $L$ can boost the clustering performance. In addition, as expected, large $\zeta_n$ would bring benefit to the clustering performance. In particular, if $\sum_{l=1}^LB_lB_l^T$ is of full rank, the RHS of \eqref{rowcluster_bound} can be simplified to 
	\begin{equation*}
		c_5\left(\frac{1}{n^2} + \frac{\log(L+n)}{Ln^2\rho^2}\right)
	\end{equation*}
	for some constant $c_5>0$.
 
\begin{myRmk} 
Define the $K$th smallest non-zero eigenvalue of $\sum_{l=1}^LB_lB_l^T$ by $\sigma_K$. Without specifying the linear growth of $\sigma_K$ with $L$ as in Assumption \ref{asmp2}, the misclassification rate is bounded by
	\begin{equation*}
		\mathcal{L}(Y,\widehat{Y}) \leq \frac{c_6}{n{\zeta_n}^2}\left(\frac{L^2}{n^2\sigma_K^2} + \frac{L\log(L+n)}{\sigma_K^2 n^2\rho^2}\right).
	\end{equation*}
It should be noted that when choosing different growth rate of $\sigma_K$, the misclassification rate as long as the requirement of $\rho$ to obtain consistency would be modified accordingly. 
\end{myRmk}

\begin{myRmk}
Note that in (\ref{rowcluster_bound}), we can also replace $\zeta_n$ by $\min_{1 \leq k \neq k' \leq K_y} \|\frac{Q^R_{k\ast}}{\sqrt{n_{k}}} - \frac{Q^R_{k'\ast}}{\sqrt{n_{k'}}}\|_2$, namely, the LHS of (\ref{row separable}). We here use $\zeta_n$ to emphasize the lower bound of the row distance of eigenvector matrix. 
\end{myRmk}

\begin{myRmk}
    The balances of both the true row and true column clusters (i.e., Assumption \ref{asmp1}) are used to simplify the misclassification rate in Theorem {\rm\ref{row_thm}}. Specifically, Assumption {\rm \ref{asmp1}} is used to provide an explicit lower bound for the minimum non-zero eigenvalues of $\sum_{l=1}^L\mathcal P_l\mathcal P_l^T$; see details in the proof.
\end{myRmk}

In Theorem {\rm \ref{row_thm}}, we consider a situation where the layer-wise networks are rather sparse, with $n\rho \leq c_3$. This situation is interesting because in the single-layer network clustering problem, a well-known necessary condition for the weak consistency is $n\rho \geq c$ for some constant $c>0$. In the context of multi-layer networks, the increasing of network layers help to alleviate the requirement for $\rho$ by $\sqrt{L}$ (i.e., $L^{1/2}n\rho \geq c_2\log(L+n)$). In Proposition \ref{dense_thm} of Appendix \ref{AppendixC}, we also establish the misclassification rate under the situation $n\rho \geq c_7 \log (L+n)$. In addition, we show the effect of bias-adjustment in Proposition \ref{sog_thm} of Appendix \ref{AppendixC}.

Analogous to the row clusters, we provide the following results on the misclassification rate with respect to the column clusters.
\begin{myThm}\label{column_thm}
Suppose that Assumptions {\rm \ref{asmp1}} and {\rm \ref{asmp2}}, and {\rm \eqref{column separable}} hold. If $L^{1/2}n\rho \geq c_{8}\log(L+n)$ and $n\rho \leq c_{9}$ for positive constants $c_{8}$ and $c_{9}$, then the output $\widehat{Z}$ of Algorithm \ref{alg1} satisfies 
	\begin{equation*}
		\mathcal{L}(Z,\widehat{Z}) \leq \frac{c_{10}}{n{\xi_n}^2}\left(\frac{1}{n^2} + \frac{\log(L+n)}{Ln^2\rho^2  }\right)
	\end{equation*}
	with probability at least $1-O((L+n)^{-1})$ for some constant $c_{10}>0$, where we recall that $\xi_n$ appears in  {\rm \eqref{column separable}}.
\end{myThm}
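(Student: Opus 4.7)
The plan is to reduce Theorem \ref{column_thm} to Theorem \ref{row_thm} via a transposition argument. Consider the auxiliary multi-layer network $\{\widetilde A_l\}_{l=1}^L$ with $\widetilde A_l := A_l^T$. A direct check shows that $\widetilde A_l$ is itself generated by a multi-layer ScBM with parameters $(Z,Y,\rho\{B_l^T\}_{l=1}^L)$: the sender/receiver roles are swapped, so the row membership matrix of $\widetilde A_l$ is $Z$ and the column membership matrix is $Y$. Out-degrees become in-degrees under transposition, so $D_l^{in}$ associated with $A_l$ equals $D_l^{out}$ associated with $\widetilde A_l$; hence the matrix $S^C$ built from $\{A_l\}$ coincides with the matrix $S^R$ built from $\{\widetilde A_l\}$. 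Consequently, the column clusters of the original network are recovered by applying the row side of Algorithm \ref{alg1} to the transposed network.

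Next, I would verify that each hypothesis of Theorem \ref{row_thm} transfers. Assumption \ref{asmp1} is symmetric in $Y$ and $Z$. Assumption \ref{asmp2} explicitly imposes the required $c_1 L$ lower bound on the $K'$th nonzero eigenvalue of $\sum_l B_l^T B_l$, which becomes the row-side eigenvalue lower bound for $\sum_l \widetilde B_l \widetilde B_l^T$. Condition \eqref{column separable} is precisely the \eqref{row separable} condition for the transposed network with $\xi_n$ in place of $\zeta_n$ and $Q^C$ in place of $Q^R$. The density hypotheses $L^{1/2} n\rho \geq c_8 \log(L+n)$ and $n\rho \leq c_9$ match those of Theorem \ref{row_thm}. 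Applying Theorem \ref{row_thm} to $\{\widetilde A_l\}_{l=1}^L$ therefore yields
\[
\mathcal L(Z,\widehat Z) \leq \frac{c_{10}}{n \xi_n^2}\Bigl(\frac{1}{n^2} + \frac{\log(L+n)}{L n^2 \rho^2}\Bigr)
\]
with probability at least $1-O((L+n)^{-1})$, which is the claimed bound.

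A direct route, should one prefer not to invoke symmetry, would reproduce the proof of Theorem \ref{row_thm} on the column side. The steps are: (i) decompose $S^C - \sum_l \mathcal P_l^T \mathcal P_l$ into analogues of $N_1, N_2, N_3$ with $A_l, \bar P_l$ replaced by $A_l^T, \bar P_l^T$, and note that the diagonal correction $D_l^{in}$ cancels the leading bias of $\mathrm{diag}(\sum_l X_l^T X_l)$ by an argument identical to \eqref{S2ii}; (ii) bound each piece in spectral norm, with the decoupling argument of \citet{LeiJ2022Bias} transferring verbatim to $\sum_l X_l^T X_l - \mathrm{diag}(\cdot)$; (iii) apply a Davis--Kahan-type inequality using the $K'$th nonzero eigenvalue of $\sum_l \mathcal P_l^T \mathcal P_l$, whose linear-in-$L$ lower bound follows from Assumptions \ref{asmp1} and \ref{asmp2}; and (iv) combine the resulting bound on $\|\widehat V \widehat O - V\|_F$ with Lemma \ref{column interpretation} through the standard $k$-means misclassification analysis, using the $\xi_n$-separation of the rows of $V$.

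I expect the main obstacle to be purely notational bookkeeping: tracking the bias correction $D_l^{in}$ through the analogue of the diagonal-subtraction step and confirming that the quadratic-form concentration inequality applies equally well to $\sum_l X_l^T X_l$. Because transposition reuses every estimate already derived for Theorem \ref{row_thm}, no genuinely new technique is required, and the symmetry reduction is the cleanest route.
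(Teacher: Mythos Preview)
Your proposal is correct. The paper itself omits the proof, stating only that it ``follows a similar approach to that of Theorem \ref{row_thm}''; your transposition argument is a clean and rigorous way to formalize this symmetry, and your outlined direct route is precisely what the paper intends by that remark.
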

As the proof of Theorem \ref{column_thm} follows a similar approach to that of Theorem \ref{row_thm}, it is omitted for brevity. Also, we can similarly discuss the results of Theorem \ref{column_thm}. 

\begin{myRmk}
     In the proofs of Theorems {\rm\ref{row_thm}} and {\rm\ref{column_thm}}, we assume that the $k$-means algorithm finds the optimal solution, while we use the heuristic Lloyd’s algorithm to solve $k$-means in experiments due to its efficiency and satisfactory empirical performance. Alternatively, one can also use a more delicate (1 + $\varepsilon$)-approximate $k$-means \citep{kumar2004simple} for a good approximate solution within a constant fraction of the optimal value. 
\end{myRmk}

\subsection{Discussion}
The theoretical challenges of our method lie in the following aspects. First, our assumptions regarding the rank-deficient connectivity matrices are considerably weaker than \citet{LeiJ2022Bias}, which further brings two barriers for establishing the Theorems  {\rm\ref{row_thm}} and {\rm\ref{column_thm}}. The first barrier is how the eigenvectors of the population SoG matrix relate to the true clusters. For this purpose, we systematically study the eigen-structure of population SoG matrix in Lemma \ref{row interpretation} (resp. Lemma \ref{column interpretation}) and provide interpretable conditions in Lemma \ref{row separable condition} (resp. Lemma \ref{column separable condition}) under which the eigenvectors of the population SoG matrices would reveal the underlying communities in multi-layer ScBMs. The second barrier lies in how to establish the lower bound of the smallest non-negative eigenvalues of the population SoG matrix. For this purpose, we carefully analyze the lower bound in Lemma \ref{kth eigenvalue}, which plays a crucial role for establishing the misclassification error bound. Second, the elements of noise term $\sum_{l=1}^LX_lX_l^T - {\rm diag}(\sum_{l=1}^LX_lX_l^T)$ exhibit complicated dependence which is caused by the quadratic form and asymmetry. In order to obtain a sharp bound, we use the decoupling techniques \citep{de1995decoupling,LeiJ2022Bias} to derive the concentration inequalities of the sum of quadratic asymmetric matrices; see Theorem \ref{sparse}.

\section{Simulations}\label{simulation}
In this section, we evaluate the finite sample performance of the proposed algorithm \textsf{DSoG}. To this end, we perform three experiments. The first one corresponds to the case where $\sum_{l=1}^LB_lB_l^T$ is of full rank, while the second one corresponds to the rank-deficient case. The third one is designed to mimic typical directed network structures.

\paragraph{\textbf{Methods for comparison.}} We compare our method \textsf{DSoG} with the following three methods.
\begin{itemize}
	\item \textsf{Sum}: spectral co-clustering based on the \emph{Sum} of adjacency matrices without squaring, that is, taking the left and right singular vectors of $\sum_{l=1}^L A_l$ as input of $k$-means clustering to obtain row and column clusters, respectively. 
	\item \textsf{SoG}: spectral co-clustering based on the \emph{Sum {o}f {G}ram} matrices, that is, taking the eigenvectors of the non-debiased matrices $\sum_{l=1}^L A_lA_l^T$ and $\sum_{l=1}^L A_l^TA_l$ as the input of $k$-means clustering to obtain row and column clusters, respectively. 
	\item \textsf{MASE}: the method called \emph{Multiple Adjacency Spectral Embedding} \citep{arroyo2021inference}, where to obtain row and column clusters, the eigenvectors of $\sum_{l=1}^L U_l U_l^T/L$ and $\sum_{l=1}^L V_l V_l^T/L$ are used as the input of the $k$-means clustering with $U_l$ and $V_l$ representing the singular vectors of the layer-wise matrices $A_l$.
\end{itemize}

\paragraph{\textbf{Experiment 1.}} The networks are generated from the multi-layer ScBM via the mechanism given in \eqref{adj_generate}. We consider $n = 500$ nodes per network across $K_y = 3$ row clusters and $K_z = 3$ column clusters, with row cluster sizes $n_1^y = 200, \ n_2^y = 100,\  n_3^y = 200$ and column cluster sizes $n_1^z = 150, \ n_2^z = 200,\  n_3^z = 150$. We fix $L = 50$ and set $B_l = \rho B^{(1)}$ for $l \in \{1,\ldots, L/2\}$, and $B_l = \rho B^{(2)}$ for $l \in \{L/2+1,\ldots, L\}$, with 
	\begin{equation*}
		B^{(1)} = U\begin{bmatrix} 1.5 & 0 & 0 \\ 0 & 0.2 & 0 \\ 0 & 0 & 0.4 \end{bmatrix}V^T \approx \begin{bmatrix}0.46 & 0.625 & 0.225 \\ 0.46 & 0.225 & 0.625 \\ 0.85 & 0.46 & 0.46 \end{bmatrix}
	\end{equation*} 
	and
	\begin{equation*}
		B^{(2)} = U\begin{bmatrix} 1.5 & 0 & 0 \\ 0 & 0.2 & 0 \\ 0 & 0 & -0.4 \end{bmatrix}V^T \approx \begin{bmatrix}0.46 & 0.225 & 0.625  \\ 0.46 & 0.625 & 0.225 \\ 0.85 & 0.46 & 0.46 \end{bmatrix},
	\end{equation*} 
	where 
	\begin{equation*}
		U = \begin{bmatrix}
			1/2 & 1/2 & -\sqrt{2}/2 \\ 1/2 & 1/2 & \sqrt{2}/2 \\ \sqrt{2}/2 & -\sqrt{2}/2 & 0
		\end{bmatrix} \quad \mbox{and} \quad
		V = \begin{bmatrix}
			\sqrt{2}/2 & -\sqrt{2}/2 & 0 \\ 1/2 & 1/2 & -\sqrt{2}/2 \\ 1/2 & 1/2 & \sqrt{2}/2
		\end{bmatrix}.		
	\end{equation*}
	To measure the effect of network sparsity, we vary the overall edge density parameter $\rho$ in the range of 0.03 to 0.16. It is obvious that direct summation of $B_l$ would lead to the confusion of first and second clusters. Hence, it is expected that \textsf{Sum} would not perform well under this case.

\paragraph{\textbf{Experiment 2.}} In this experiment, we consider the case where $\sum_{l=1}^LB_lB_l^T$ (or $\sum_{l=1}^LB_lB_l^T$) is rank-deficient. Specifically, we consider the following model. We fix $L = 50$ and set $B_l = \rho B^{(1)}$ for $l \in \{1,\ldots, L/2\}$, and $B_l = \rho B^{(2)}$ for $l \in \{L/2+1,\ldots, L\}$, with
	\begin{equation*}
		B^{(1)} = U\begin{bmatrix} 1.2 & 0 & 0 \\ 0 & 0.4 & 0 \\ 0 & 0 & 0 \end{bmatrix}V^T = \begin{bmatrix} 0.59 & 0.14 & 0.33 \\ 0.14 & 0.39 & 0.47 \\ 0.33 & 0.47 & 0.63 \end{bmatrix}
	\end{equation*} 
	and
	\begin{equation*}
		B^{(2)} = U\begin{bmatrix} 1.2 & 0 & 0 \\ 0 & -0.4 & 0 \\ 0 & 0 & 0 \end{bmatrix}V^T = \begin{bmatrix} 0.01 & 0.46 & 0.52  \\ 0.46 & 0.21 & 0.37 \\ 0.52 & 0.37 & 0.57 \end{bmatrix},
	\end{equation*} 
	where 
	\begin{equation*}
		U = V \approx \begin{bmatrix}
			0.5 & 0.84 & -0.19 \\ 0.5 & -0.46 & -0.73 \\ 0.71 & -0.27 & 0.65
		\end{bmatrix}
	\end{equation*}
	and $\rho$ is the overall edge density parameter which varies from 0.03 to 0.16. It is easy to see that $\sum_{l=1}^LB_lB_l^T$ is rank-deficient  and the rank is 2. As in Experiment 1, we consider $n = 500$ nodes per network across $K_y = 3$ row clusters and $K_z = 3$ column clusters, with row cluster sizes $n_1^y = 100, \ n_2^y = 150,\  n_3^y = 250$ and column cluster sizes $n_1^z = 100, \ n_2^z = 250,\  n_3^z = 150$. With this set-up, we generate the adjacency matrix with respect to each layer via (\ref{adj_generate}).
		
\paragraph{\textbf{Experiment 3.}} 
In this experiment, we generate directed networks with `transmission' nodes or `message passing' nodes. In networks with `transmission' nodes, there exists a set of nodes that only receive edges from one set of nodes and send edges to another set of nodes, and hence are called ‘transmission nodes’. Hence, the sending clusters and receiving clusters are distinct. See Figure \ref{topological} (a) and (b) for illustration. In networks with `message passing' nodes, the edges spanning different communities start in upper communities and extend down to lower communities, just like passing messages. See Figure \ref{topological}(c) for illustration, where the row clusters and column clusters are identical. In our set-up, we fix $L = 30$ and set $B_l = \rho B^{(1)}$ for $l \in \{1, \ldots, L/3\}$,  $B_l = \rho B^{(2)}$ for $l \in \{L/3+1, \ldots, 2L/3\}$, and $B_l = \rho B^{(3)}$ for $l \in \{2L/3+1, \ldots, L\}$, with 
	\begin{equation*}
		B^{(1)} = \begin{bmatrix}
			0.3 & 0 & 0 \\ 0 & 0.2 & 0 \\ 0 & 0 & 0.3
		\end{bmatrix}, \quad 
		B^{(2)} = \begin{bmatrix}
			0 & 0 & 0 \\ 0.3 & 0 & 0 \\ 0.5 & 0.3 & 0
		\end{bmatrix}, \quad 
  		B^{(3)} = \begin{bmatrix}
			0 & 0.2 & 0.2 \\ 0 & 0 & 0.2 \\ 0 & 0 & 0
		\end{bmatrix}.  \quad 
	\end{equation*}
Under $B^{(1)}$, we let the row and column clusters to be different with transmission nodes; see Figure \ref{topological} (a) and (b) for illustration. Under $B^{(2)}$ and $B^{(3)}$, we incorporate the message passing nodes and the row clusters and column clusters are identical to those corresponds to $B^{(1)}$; see Figure \ref{topological} (c) for illustration. Specifically, we consider $n = 300$ nodes per network across $K_y = 3$ row clusters and $K_z = 3$ column clusters, with row cluster sizes $n_1^y = 120, \ n_2^y = 100,\  n_3^y = 80$ and column cluster sizes $n_1^z = 80, \ n_2^z = 100,\  n_3^z = 120$. With this set-up, we generate the adjacency matrix for each layer via (\ref{adj_generate}), with the overall edge density parameter $\rho$ varying from 0.03 to 0.16.
\begin{figure*}[!htbp]
    \centering
    \begin{subfigure}{0.3\textwidth}\includegraphics[width=\textwidth]{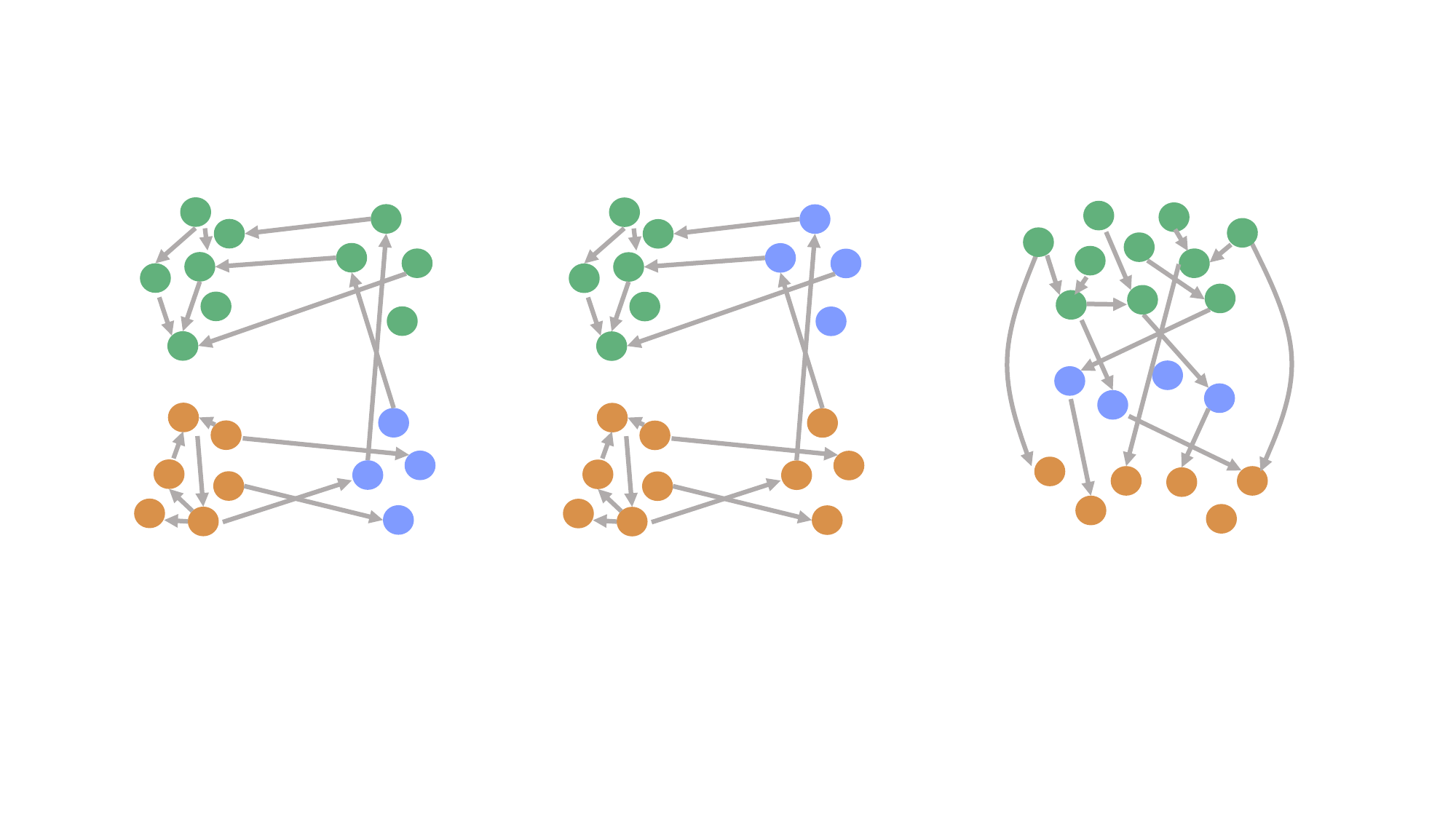}
        \caption{}
    \end{subfigure}
    \hspace{0.01\textwidth}
    \begin{subfigure}{0.3\textwidth}
        \includegraphics[width=\textwidth]{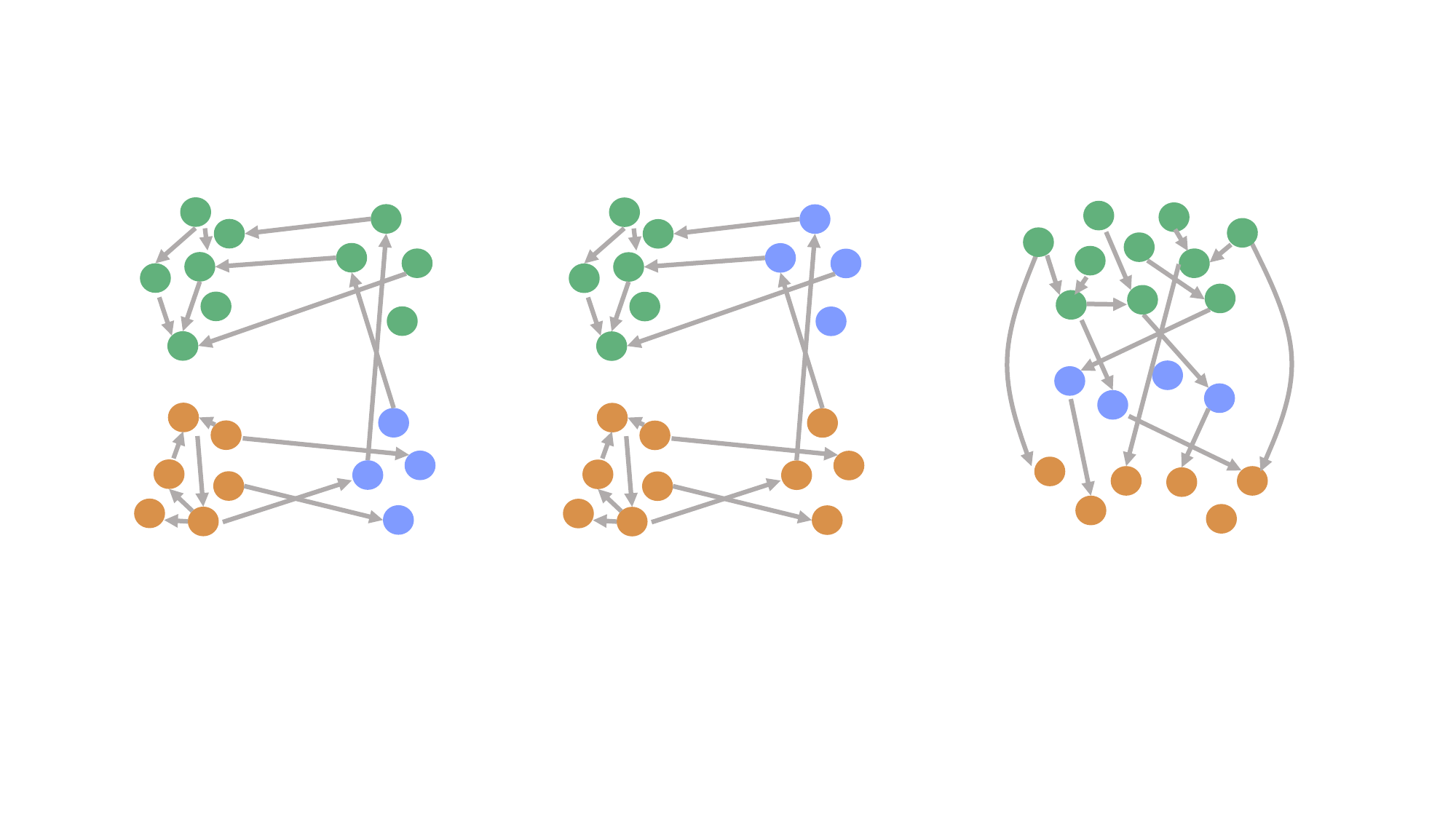}
        \caption{}
    \end{subfigure}
    \hspace{0.01\textwidth}
    \begin{subfigure}{0.3\textwidth}
        \includegraphics[width=\textwidth]{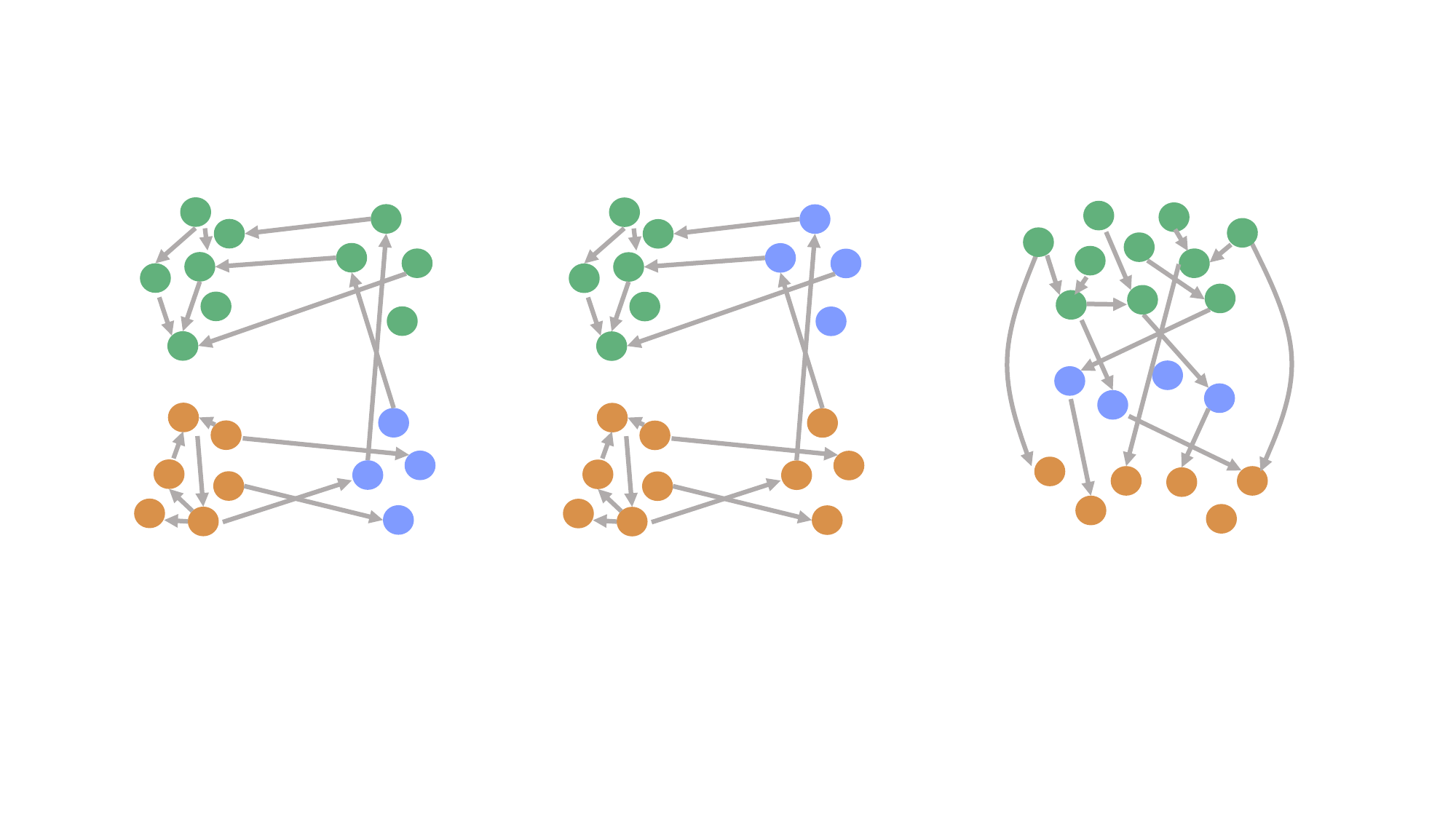}
        \caption{}
    \end{subfigure}
    \caption{Illustration for networks under different block  probability matrices. Colors indicate communities. (a) row clusters under $B^{(1)}$; (b) column clusters under $B^{(1)}$; (c) Row clusters under $B^{(2)}$ ($B^{(3)}$); column clusters are analogous.}
    \label{topological}
\end{figure*}

\begin{figure*}[!htbp]
    \centering
    \begin{subfigure}[b]{0.48\textwidth}
        \includegraphics[width=\textwidth]{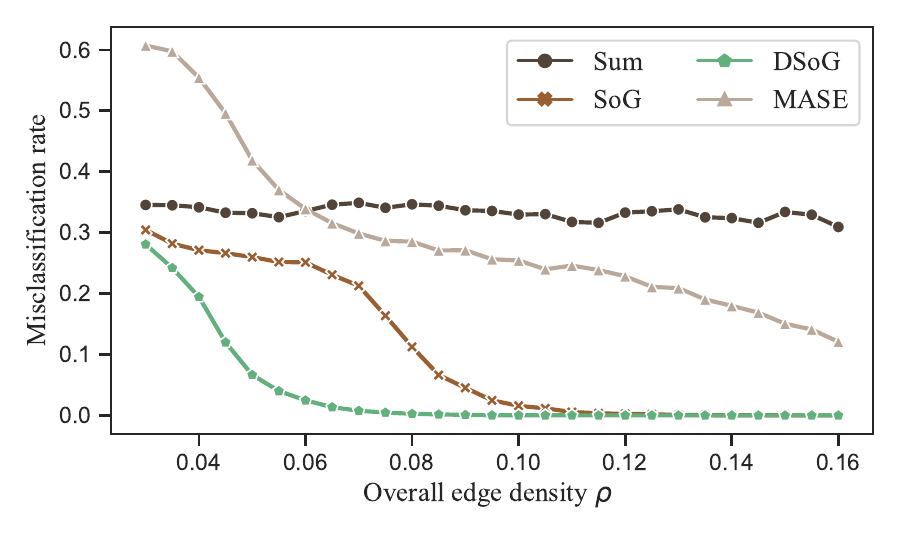}
        \vspace{-0.13\textwidth}
        \caption{Experiment 1, row clustering}
    \end{subfigure}\hspace{0.3cm}
    \begin{subfigure}[b]{0.48\textwidth}
        \includegraphics[width=\textwidth]{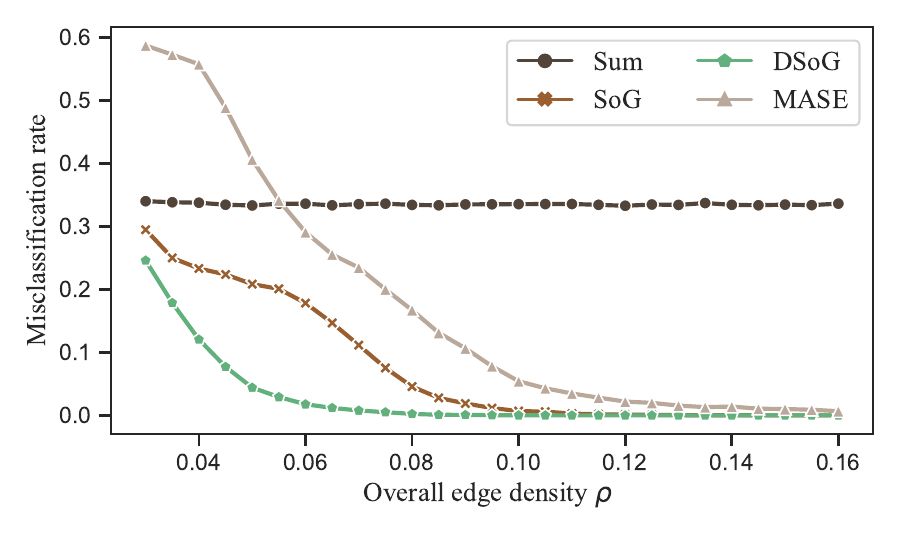}
        \vspace{-0.13\textwidth}
        \caption{Experiment 1, column clustering}
    \end{subfigure}\\
    \vspace{0.035\textwidth}
    \begin{subfigure}[b]{0.48\textwidth}
        \includegraphics[width=\textwidth]{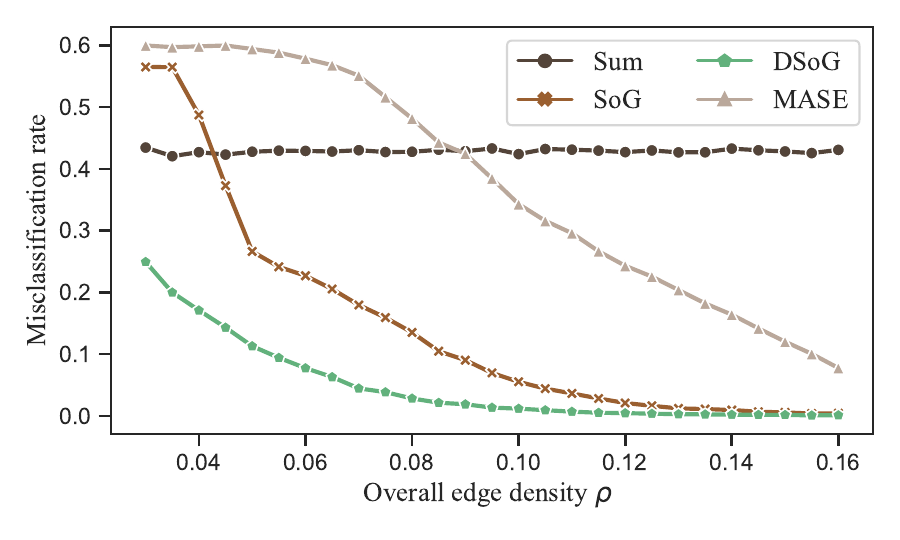}       
        \vspace{-0.13\textwidth}
        \caption{Experiment 2, row clustering}
    \end{subfigure}
    \begin{subfigure}[b]{0.48\textwidth}
        \includegraphics[width=\textwidth]{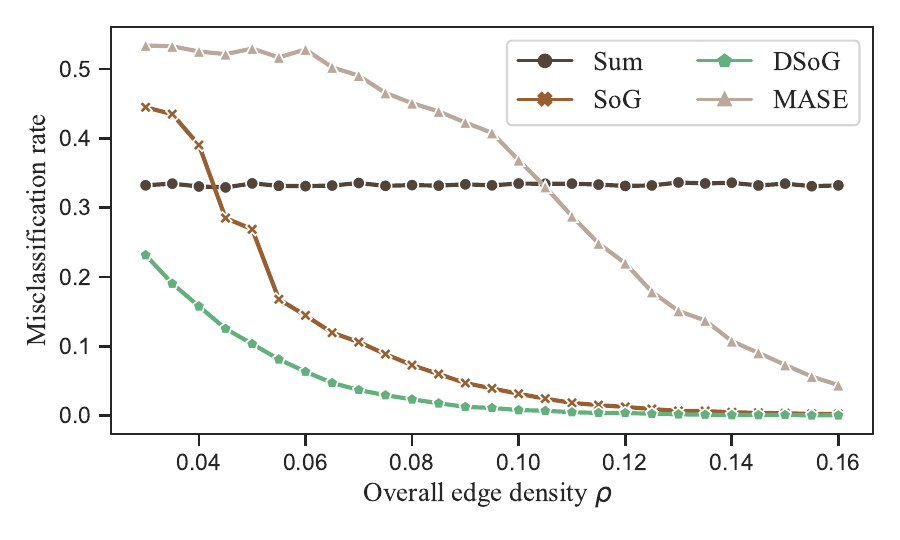}       
        \vspace{-0.13\textwidth}
        \caption{Experiment 2, column clustering}
    \end{subfigure}\\
    \vspace{0.035\textwidth}
    \begin{subfigure}[b]{0.48\textwidth}
        \includegraphics[width=\textwidth]{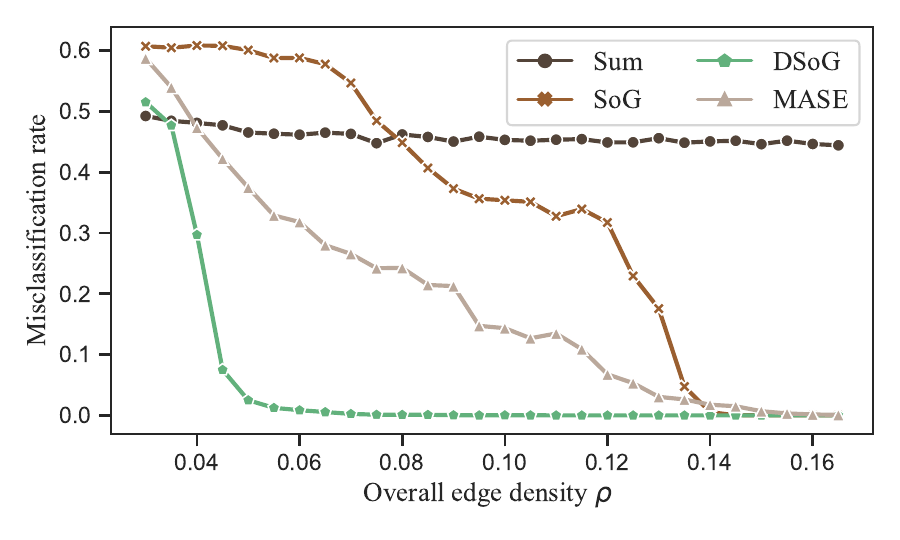}       
        \vspace{-0.13\textwidth}
        \caption{Experiment 3, row clustering}
    \end{subfigure}
    \begin{subfigure}[b]{0.48\textwidth}
        \includegraphics[width=\textwidth]{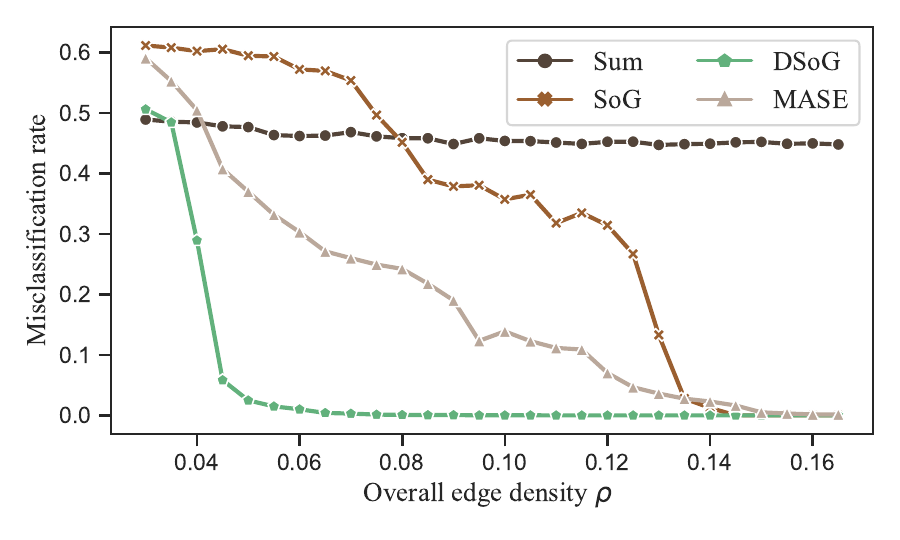}       
        \vspace{-0.13\textwidth}
        \caption{Experiment 3, column clustering}
    \end{subfigure} 
    \caption{Misclassification rates of four methods with varying $\rho$ in three experimental set-ups. }
    \label{simulation result}
\end{figure*}

\paragraph{\textbf{Results.}} We use the misclassification rate defined in (\ref{misclustering rate}) to measure the proportion of misclassified (up to permutations) nodes. The embedding dimensions and the number of clusters are all set to their true values. The averaged results over 50 replications for Experiments 1-3 are displayed in Figure \ref{simulation result}, where we vary $\rho$ from 0.03 to 0.16 in 27 equally-spaced values.  The results demonstrate that our proposed method \textsf{DSoG} has a noticeable impact on the accuracy of clustering. Specifically, the \textsf{Sum} method performs poorly due to the fact that some eigen-components cancel out in the summation. The \textsf{DSoG} method has a significant advantage over the \textsf{SoG} method, especially when $\rho$ is small, i.e., the very sparse regime, which is consistent with our theoretical results. We can also observe that our method \textsf{DSoG} outperforms the \textsf{MASE} method in all settings. 

It is important to emphasize that in the above experiments, the community cancellation leads to the inferior performance of \textsf{Sum}, while we note that when the direct summation neither causes community cancellation nor signal reduction, \textsf{Sum} is comparable or slightly better than \textsf{DSoG}. Since we never know the truth in real applications, \textsf{DSoG} provides a safe and satisfactory estimator for co-clustering multi-layer directed networks.

In Appendix \ref{AppendixE}, we also conduct an experiment to study the sensitivity of tuning parameters for each method. It turns out that \textsf{DSoG} continues to outperform other methods, although there exists certain performance degradation for all methods.

In the above experiments, we focused on the comparison of \textsf{DSoG} with spectral clustering-based methods. In Appendix \ref{AppendixE}, we also compare \textsf{DSoG} with likelihood-based methods by \cite{wang2021fast} and \cite{fu2023profile}. It turns out that these likelihood-based methods show unstable and inferior performance under the random initialization or initialized by \textsf{Sum}. While when choosing \textsf{DSoG} as the initial estimator, the likelihood-based method \textsf{ML-PPL} shows better performance than our method \textsf{DSoG}. Hence, the proposed method provides a good initial estimator for these likelihood-based methods when there exists community cancellation via direct summation.
In addition, we scale the number of nodes to 1000 and compare the running time of each method. It turns out that \textsf{DSoG} is more computationally efficient than likelihood-based methods.
See Appendix \ref{AppendixE} for the detailed experiments and results.

\section{Real data analysis}\label{realdata}
In this section, we analyze the WFAT dataset, which is a public dataset collected by the Food and Agriculture Organization of the United Nations. The original data includes trading records for more than 400 food and agricultural products imported/exported annually by all the countries in the world. The dataset is available at \href{https://www.fao.org}{https://www.fao.org}. As described in other works analyzing these data \citep{de2015structural,jing2021community,noroozi2022sparse}, it can be considered as a multi-layer network, where layers represent food and agricultural products, and nodes are countries and edges at each layer represent import/export relationships of a specific food and agricultural product among countries. Specifically, \cite{noroozi2022sparse} treated this dataset as a \emph{mixture} multi-layer SBMs \citep{pensky2021clustering}, that is, the network layers are classified to different groups, the within group layers share common community structures. The method identifies three groups of layers. Therefore, to fit into our assumption that the communities are consensus among layers, we select a group identified by \cite{noroozi2022sparse}. This group including 24 products contains mostly cereals, stimulant crops and derived products, see Table \ref{items} for details.


\begin{table}[!htbp]
\centering
\begin{tabular}[]{|c|}
\hline
``Pastry", ``Rice, paddy", ``Rice, milled", ``Breakfast cereals", Mixes and doughs", \\``Food preparations of flour, meal or malt extract", ``Sugar and syrups n.e.c.", \\``Sugar confectionery",  ``Communion wafers and similar products.", ``Prepared nuts", \\ ``Vegetables preserved (frozen)", ``Juice of fruits n.e.c.", ``Fruit prepared n.e.c.",\\ ``Orange juice", ``Other non-alcoholic caloric beverages", ``Food wastes", \\ ``Other spirituous beverages", ``Coffee, decaffeinated or roasted", ``Coffee, green", \\``Chocolate products nes", ``Pepper, raw", ``Dog or cat food, put up for retail sale", \\``Food preparations n.e.c.", ``Crude organic material n.e.c."\\
\hline
\end{tabular}
    \caption{List of a group of food and agricultural products we considered, which consists of 24 different products involving cereals, stimulant crops, and derived products.}
    \label{items}
\end{table}

\paragraph{\textbf{Data preprocessing.}} We convert the original trading data into directed networks, which is distinct from all previously described efforts to analyze this data, where the trading data is reduced to an undirected network. We focus on the trading data in the year 2020. To create a directed trading network for each of the 24 products, we draw a directed edge from the exporter to the importer if the export/import value of the product exceeds \$10000. This particular threshold would yield sparse networks that have many disjoint connected components individually but have one connected component after aggregation. It is important to mention that the performance of \textsf{DSoG} is insensitive to the specific product value threshold.
We then remove all the countries (nodes) whose total in-degree or out-degree across all 24 layers is less than 14. We choose this value to make sure that each node has links to at least one of the other nodes in at least half of the layers. Indeed, the average total in-degree and out-degree of nodes which do not have any neighbors in 12 or more of the layers (i.e., more than half of the layers have a zero out-degree or in-degree) is 13. As a result, we obtain a multi-layer network with 24 layers and 142 nodes per layer. Subsequently, we reconstruct the row and column clusters using the proposed algorithm. 

\paragraph{\textbf{Tuning parameters selection.}} There are four tuning parameters $K_y, K_z, K$ and $K'$ in real data analysis. 
To select the embedding dimensions $K$ and $K'$, we use the scree plot method, which is widely used in network data analysis. The scree plot of the top eigenvalues of the debiased SoG matrices over the layer-wise adjacency matrices are shown in Figure \ref{scree plot}. For both of the row and column clustering, there is an elbow on the scree plot at the 4th position. Hence, we choose $K = 4$ and $K' = 4$ in the WFAT data analysis.
\begin{figure*}[h]
    \centering
    \begin{subfigure}{0.35\textwidth}\includegraphics[width=\textwidth]{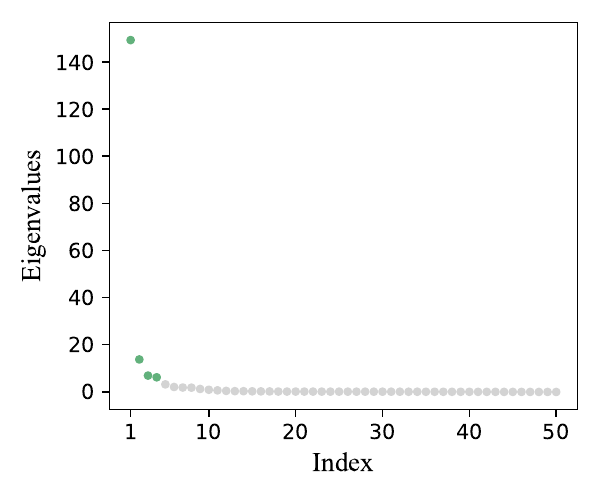}
        \caption{row clustering}
    \end{subfigure}
    \hspace{0.1\textwidth}
    \begin{subfigure}{0.35\textwidth}
        \includegraphics[width=\textwidth]{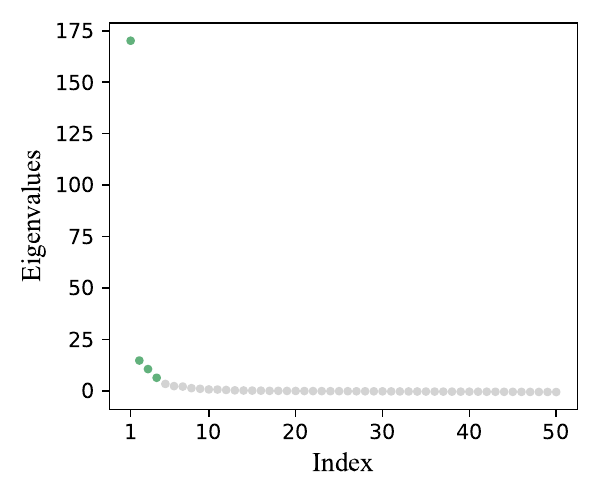}
        \caption{column clustering}
    \end{subfigure}
    \caption{Scree plots of the top eigenvalues of DSoG matrices with respect to row clustering and column clustering for the WFAT data.}
    \label{scree plot}
\end{figure*}

\begin{figure*}[!h]
    \centering
    \begin{subfigure}{0.35\textwidth}\includegraphics[width=\textwidth]{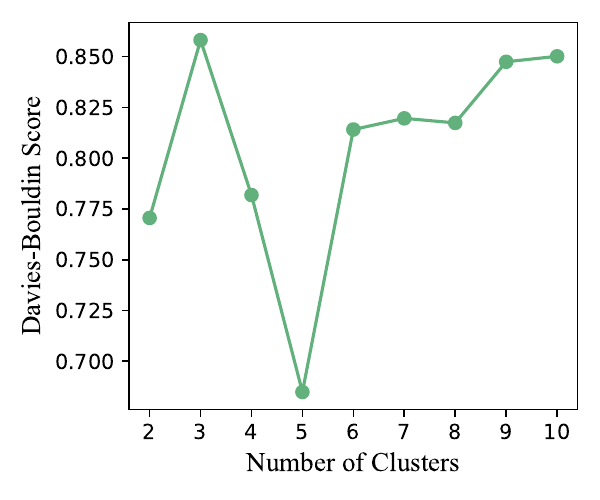}
        \caption{row clustering}
    \end{subfigure}
    \hspace{0.1\textwidth}
    \begin{subfigure}{0.35\textwidth}
        \includegraphics[width=\textwidth]{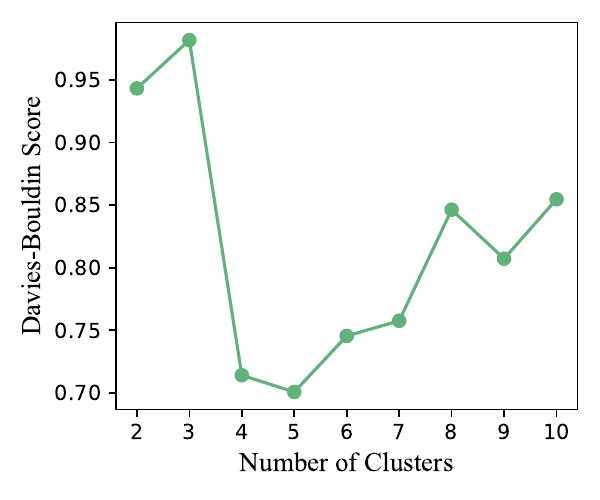}
        \caption{column clustering}
    \end{subfigure}
    \caption{Davies-Bouldin scores under different clustering numbers with respect to row clustering and column clustering for the WFAT data.
    }
    \label{dbscore}
\end{figure*}

To select the number of clusters $K_y$ and $K_z$, we use the model-free method Davies-Bouldin score \citep{davies1979cluster}, because current literature for selecting the number of clusters violated our rank-deficient assumption \citep{li2020network,hu2020corrected}. 
Figure \ref{dbscore} shows the Davies-Bouldin scores for the row and column clustering, where a smaller value indicates a more compact and well-separated cluster. The Davies-Bouldin scores turn out to attain their minimum at $5$ for both the row and column clustering. Hence, we set $K_y=K_z= 5$ in the WFAT data analysis.

\paragraph{\textbf{Results.}}
The estimated row and column clusters are displayed in Figure \ref{map}. The clusters of countries are approximately related to their geographic locations, which is coherent with the economic laws of world trade. Specifically, for the row clusters (see Figure \ref{map}(a)), Community 1 mainly includes countries in Africa, West and Central Asia; Community 2 is composed of countries in Central and South America; Community 3 includes China, the United States, Western and Southern Europe countries; Community 4 involves Southeast Asia and Oceania countries; Community 5 consists of the remaining European countries. For the column clusters  (see Figure \ref{map}(b)), Community 1 mainly includes Africa and West Asian countries; Community 2 consists of countries in Central and South America; Community 3 includes Oceania, East and South Asian countries; Community 4 involves Eastern European and some West Asian countries; Community 5 mainly includes Western Europe, North America and Russia. 
\begin{figure*}[!htbp]
    \centering
    \begin{subfigure}[b]{0.95\textwidth}
        \includegraphics[width=\textwidth]{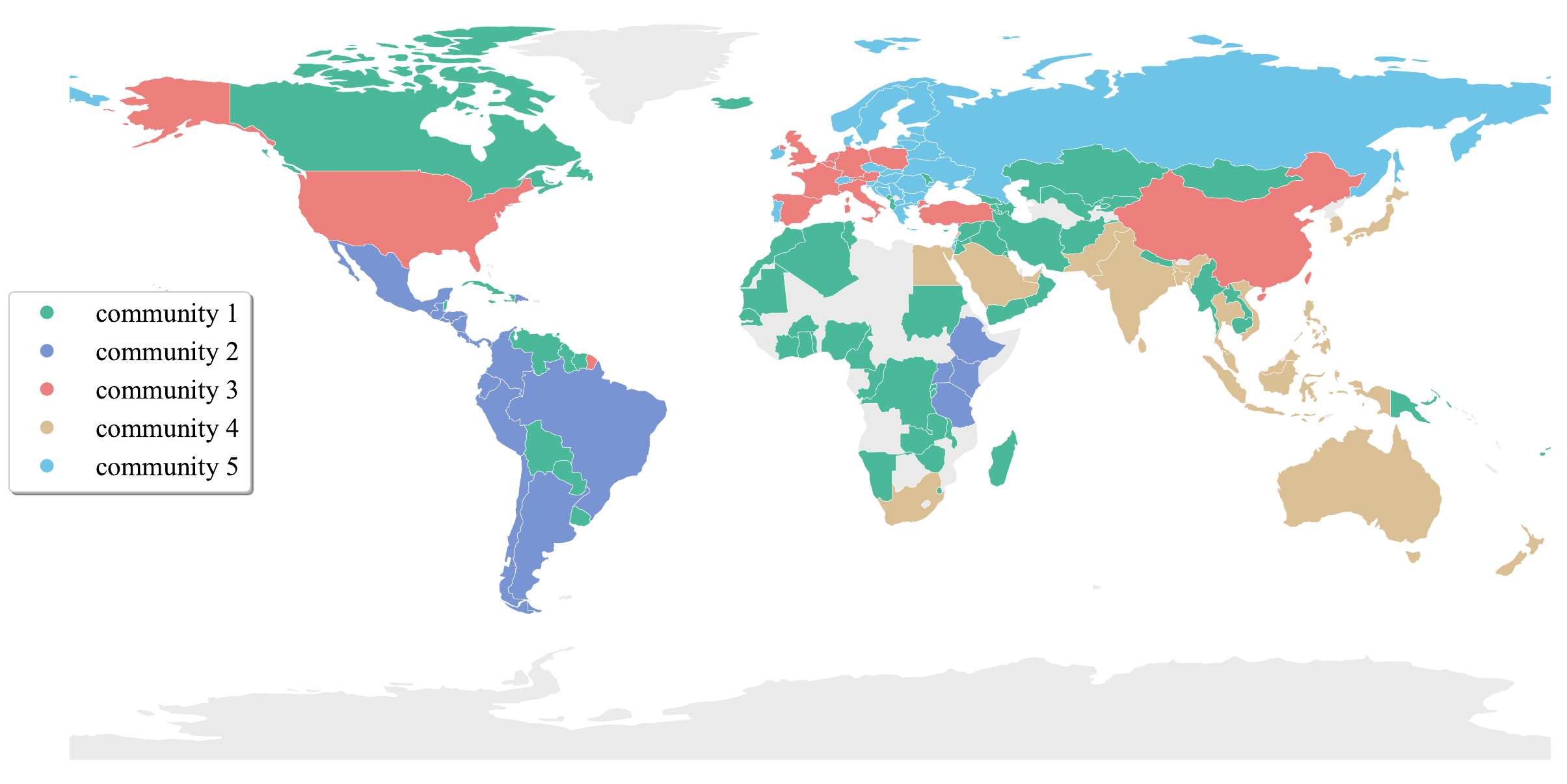}
        \caption{row (export) clustering}
    \end{subfigure}\\
    \vspace{0.02\textwidth}
    \begin{subfigure}[b]{0.95\textwidth}
        \includegraphics[width=\textwidth]{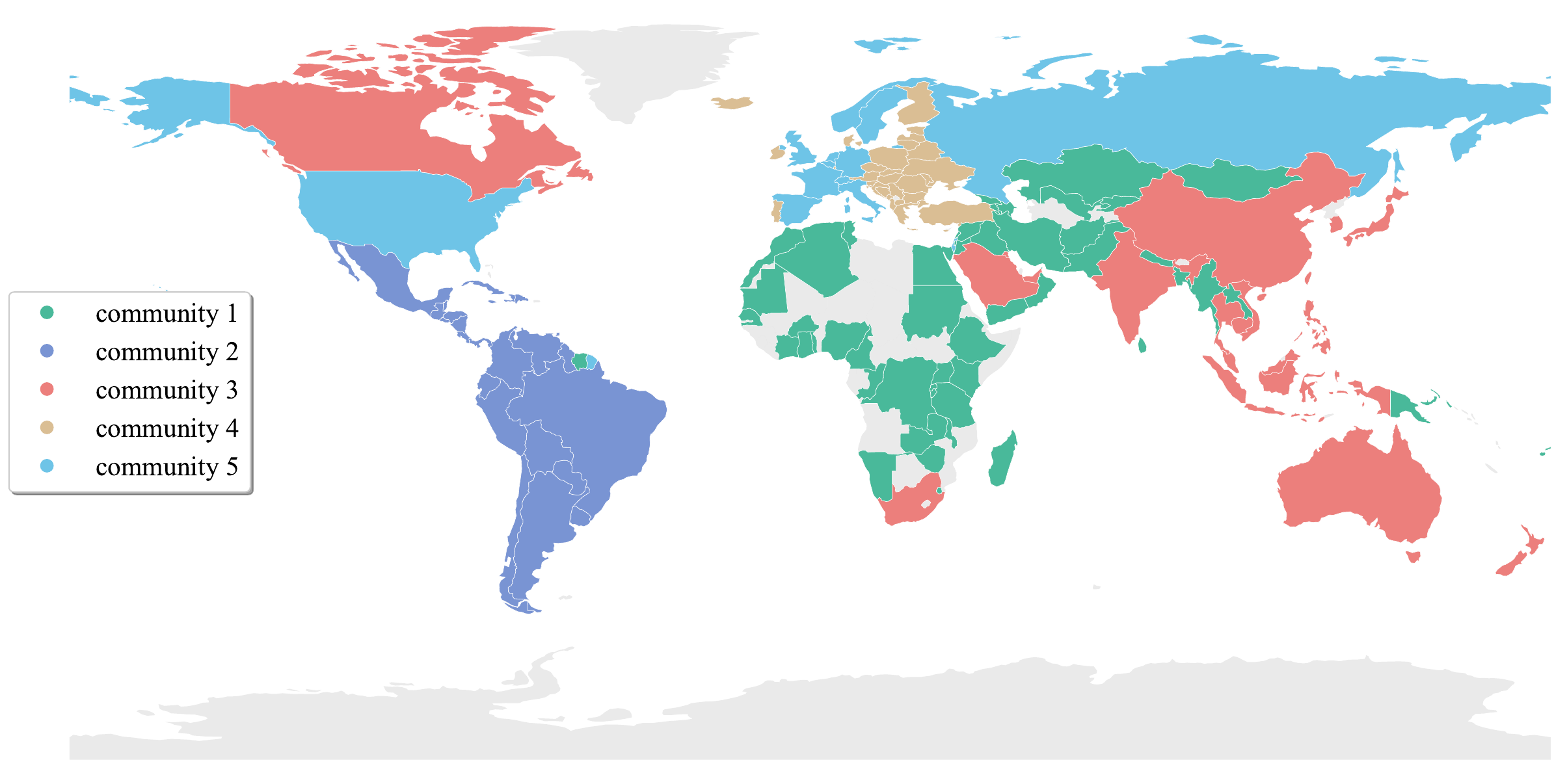}
        \caption{column (import) clustering}
    \end{subfigure}
    \caption{Community structure separation of food trade networks (directed) containing 142 major countries. Colors indicate communities, 
    where light gray corresponds to countries that do not participate in clustering.}
    \label{map}
\end{figure*}
 
\begin{figure*}[!htbp]{}
    \centering
    \includegraphics[width=0.95\textwidth]{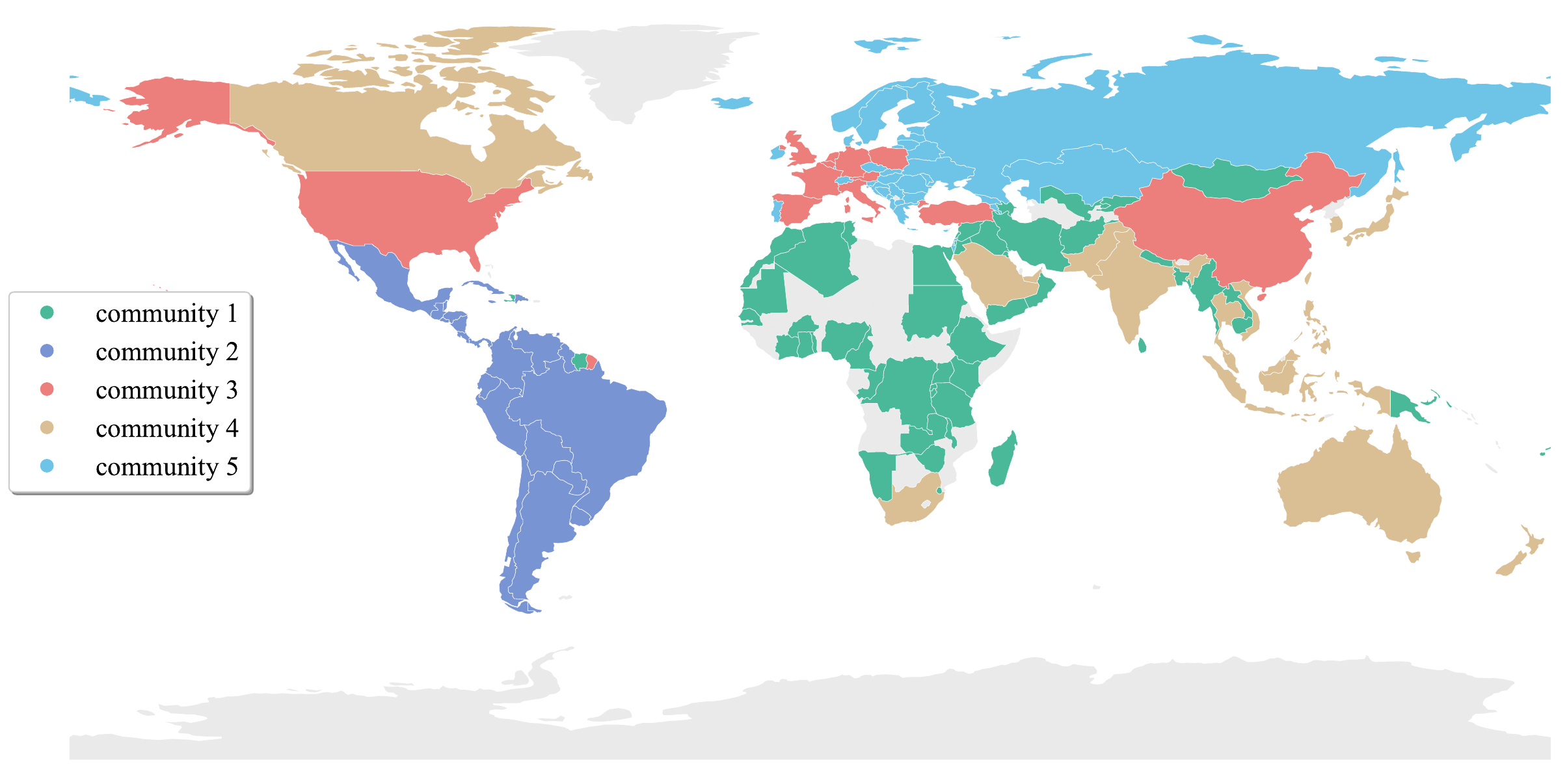}
    \caption{Community structure separation of food trade networks (undirected) containing 142 major countries. Colors indicate communities, 
    where light gray corresponds to countries that do not participate in clustering.}
    \label{map_undirected}
\end{figure*}

We can see that the structure of the row (export) clusters is not identical to that of the column (import) clusters, which is insightful and more realistic than what would be expected from undirected multi-layer networks. For instance, China is grouped with major European economies and America in the row clusters partition, while it is grouped mainly with East and South Asia countries in the column clusters partition, indicating that for the products in Table \ref{items}, China is aligned with major world economies in its export trade, while it is aligned mainly with neighboring countries in its import trade. This observation is entirely plausible. Export patterns primarily reflect a nation's productive capacity and dominant industries. Major global economies with robust food processing industries and agricultural technology tend to export value-added manufactured goods. Conversely, import patterns primarily mirror a country's consumer demand. Due to similar geographical conditions, population sizes and food consumption habits, countries in close proximity to each other often exhibit similar import patterns. 

By contrast, we compare \textsf{DSoG} with its undirected analog, \textsf{DSoS}. For \textsf{DSoS}, we ignore the direction of edges to obtain symmetrized adjacency matrices $A'_l$, and then perform eigendecomposition and $k$-means on the  debiased $\sum_{l=1}^L(A'_l)^2$ \citep{LeiJ2022Bias}. The results of DSoS are shown in Figure \ref{map_undirected}. We use the Adjusted Rand Index (ARI) to measure the similarity between \textsf{DSoG} and \textsf{DSoS} in terms of row clustering and column clustering, where the single set of clusters obtained by \textsf{DSoS} is regarded as both the row and column clusters, and a larger ARI indicates more similarities. It turns out that for the column (import) clusters, the ARI is 0.74, while for the row (export) clusters, the ARI is 0.53. Therefore, the results of \textsf{DSoS} are less aligned with the row clusters of \textsf{DSoG}, which leads to the failure of \textsf{DSoS} in recognizing the export patterns. 

\begin{figure*}[t]{}
    \centering
    \includegraphics[width=0.6\textwidth]{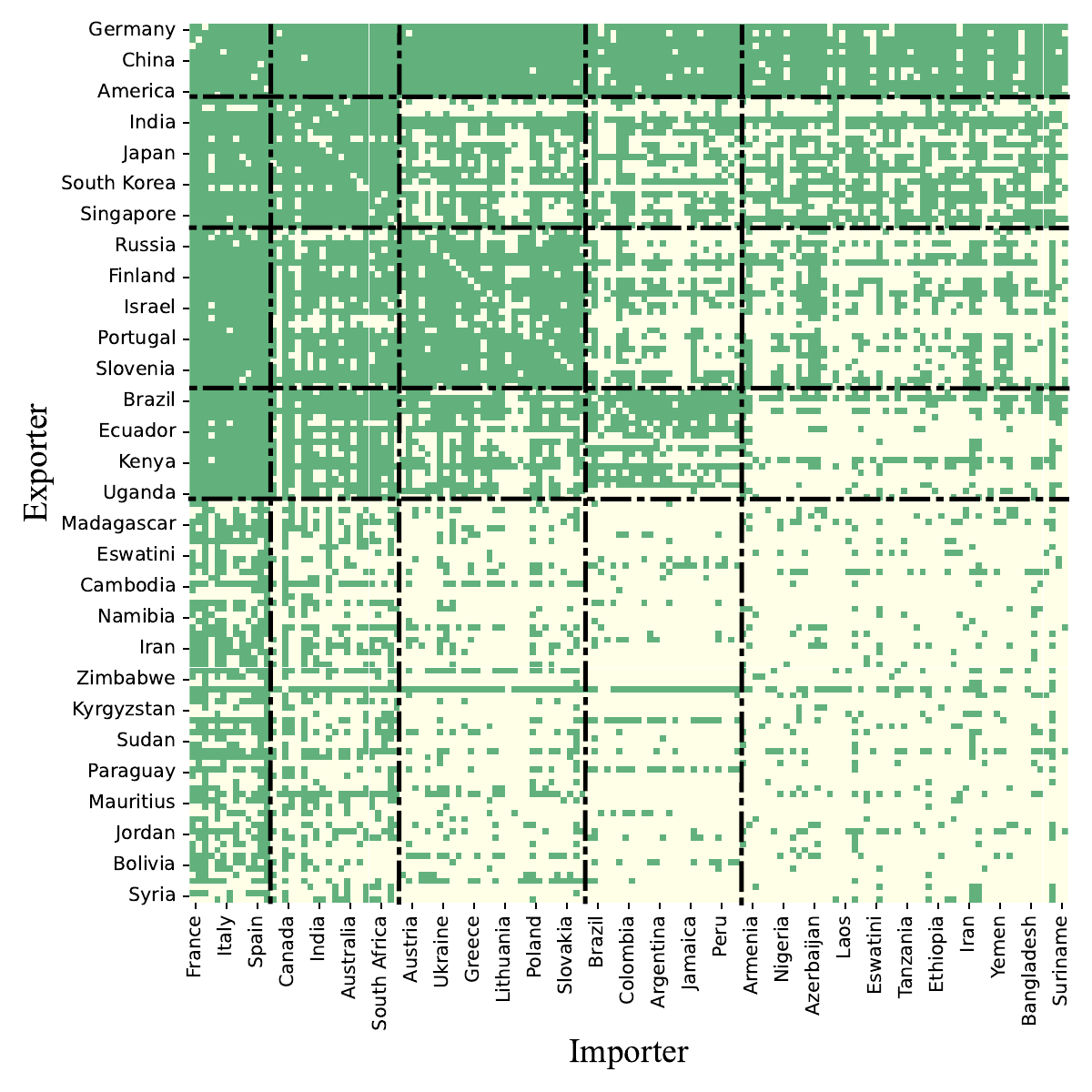}    
    \caption{Visualization of the aggregated adjacency matrix. Yellow pixels correspond to the absence of an edge between the corresponding countries while green pixels correspond to an edge. The black dashed lines in the figures signifies the division between clusters. }
    \label{Adj_sum}
\end{figure*}

\begin{figure*}[!htbp]
    \centering
    \begin{subfigure}[b]{0.37\textwidth}
        \includegraphics[width=\textwidth]{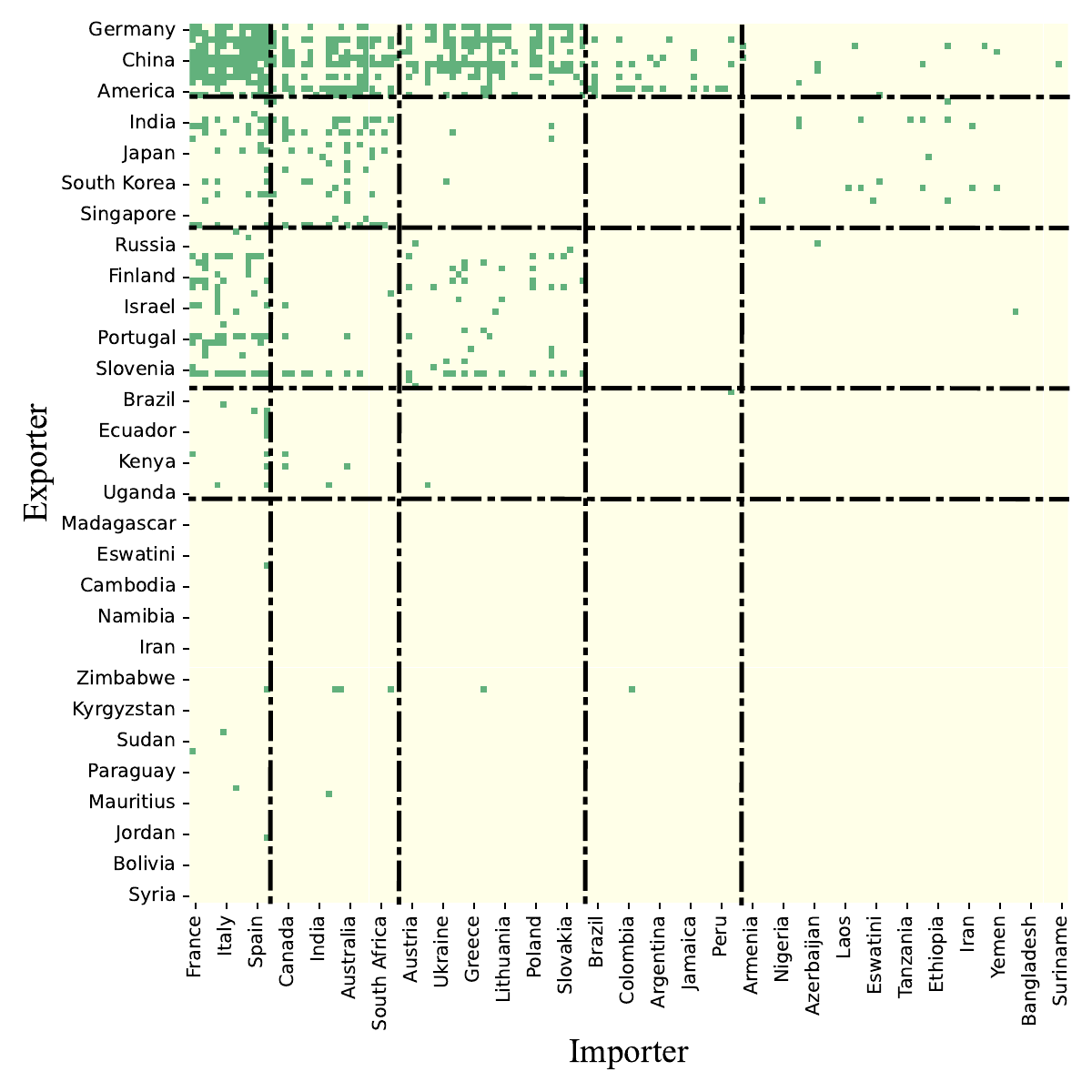}
        \vspace{-0.16\textwidth}
        \caption{Vegetables preserved (frozen)}
    \end{subfigure}
    \hspace{0.05\textwidth}
    \begin{subfigure}[b]{0.37\textwidth}
        \includegraphics[width=\textwidth]{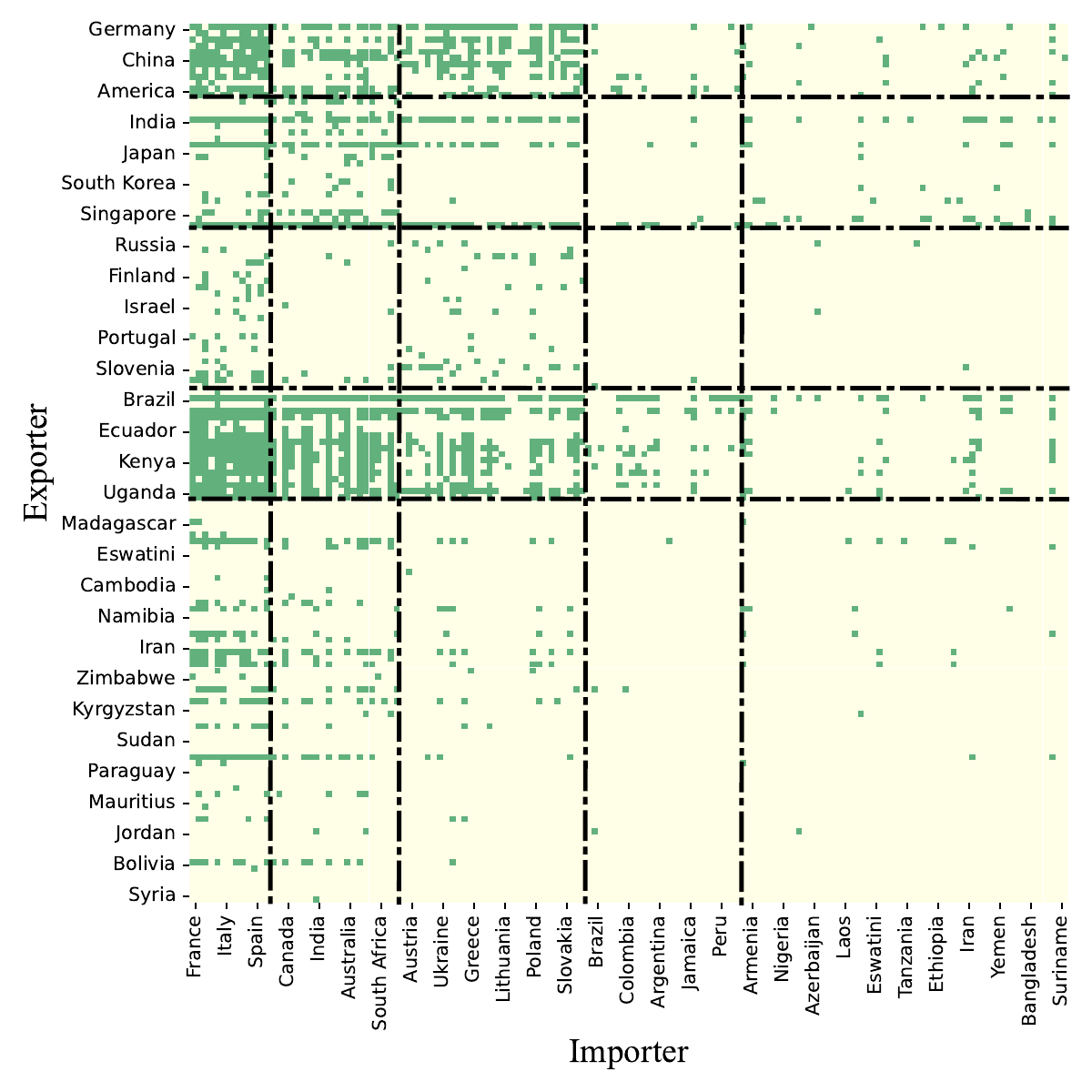}
        \vspace{-0.16\textwidth}
        \caption{Coffee, green}
    \end{subfigure}\\
    \vspace{0.03\textwidth}
    \begin{subfigure}[b]{0.37\textwidth}
        \includegraphics[width=\textwidth]{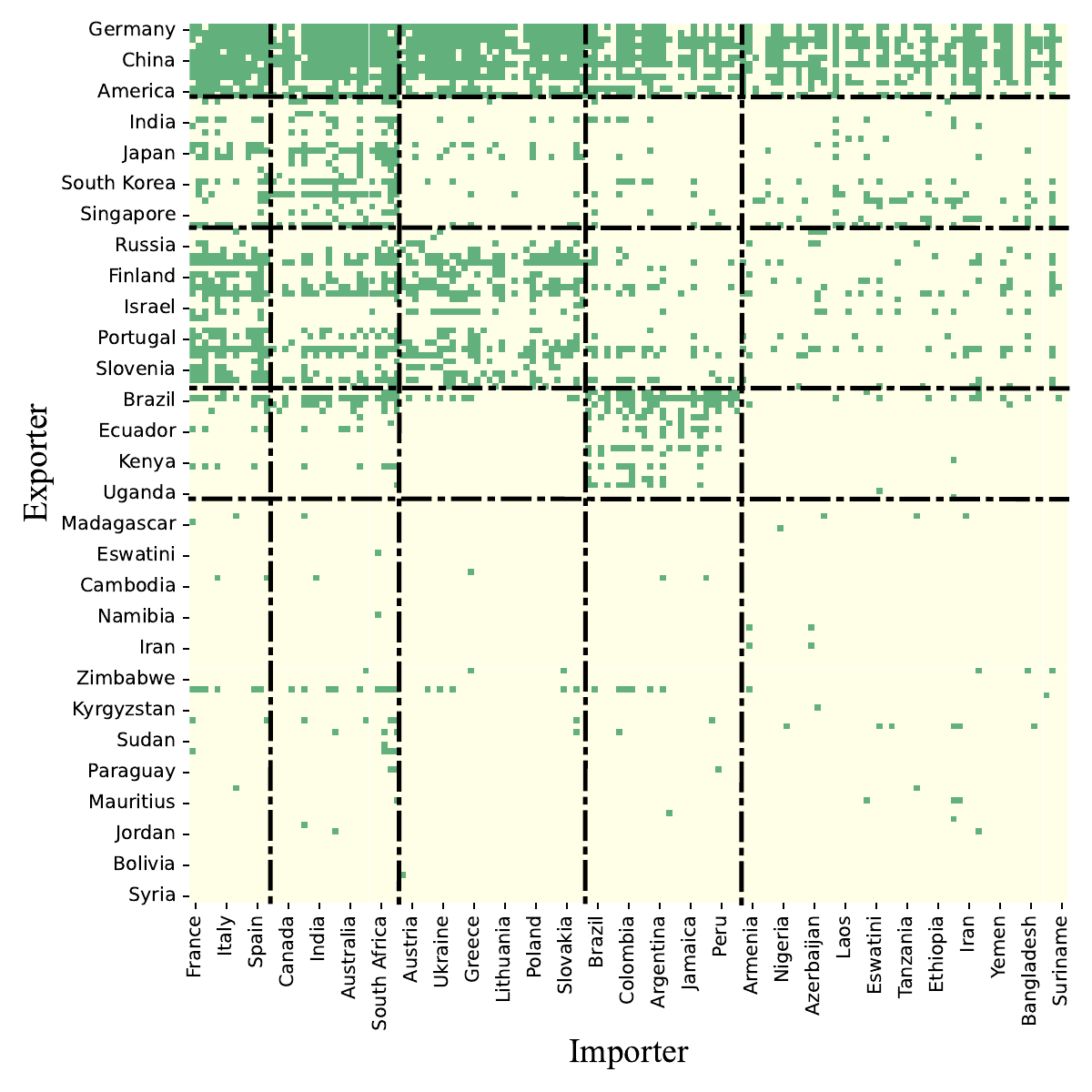}
        \vspace{-0.16\textwidth}
        \caption{Food wastes}
    \end{subfigure}
    \hspace{0.05\textwidth}
    \begin{subfigure}[b]{0.37\textwidth}
        \includegraphics[width=\textwidth]{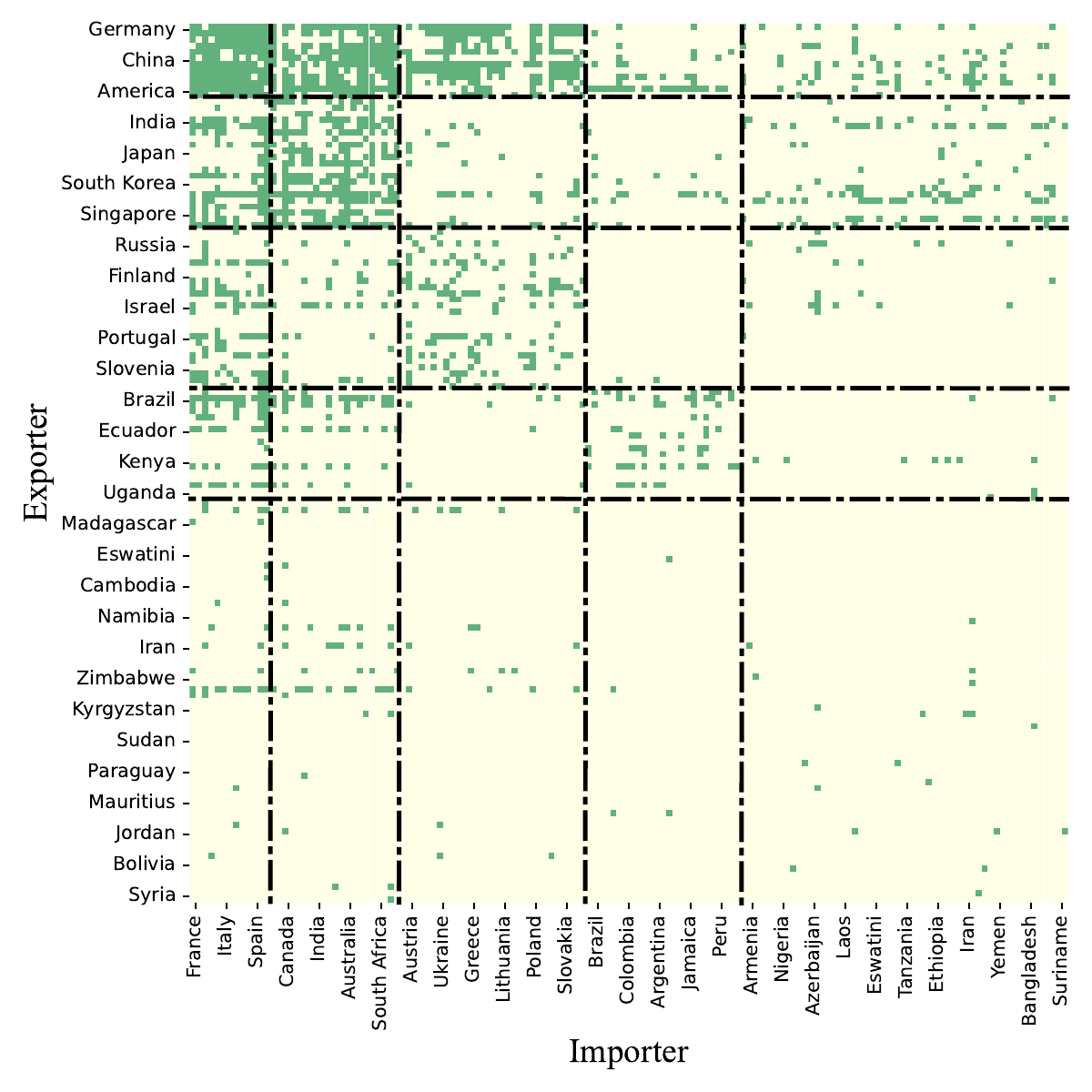}
        \vspace{-0.16\textwidth}
        \caption{Juice of fruits n.e.c.}
    \end{subfigure}\\
    \vspace{0.03\textwidth}
    \begin{subfigure}[b]{0.37\textwidth}
        \includegraphics[width=\textwidth]{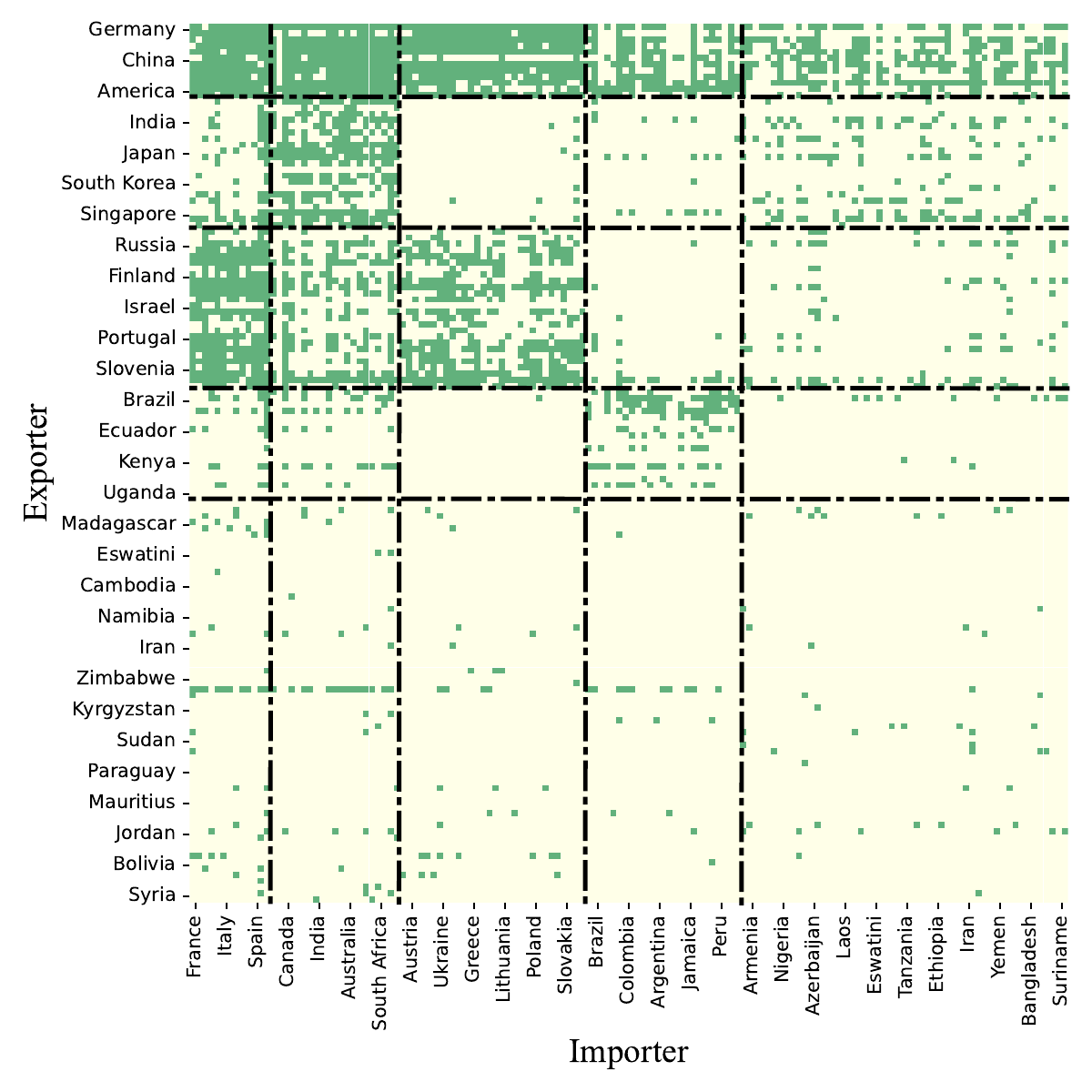}
        \vspace{-0.16\textwidth}
        \caption{Chocolate products nes}
    \end{subfigure}
    \hspace{0.05\textwidth}
    \begin{subfigure}[b]{0.37\textwidth}
        \includegraphics[width=\textwidth]{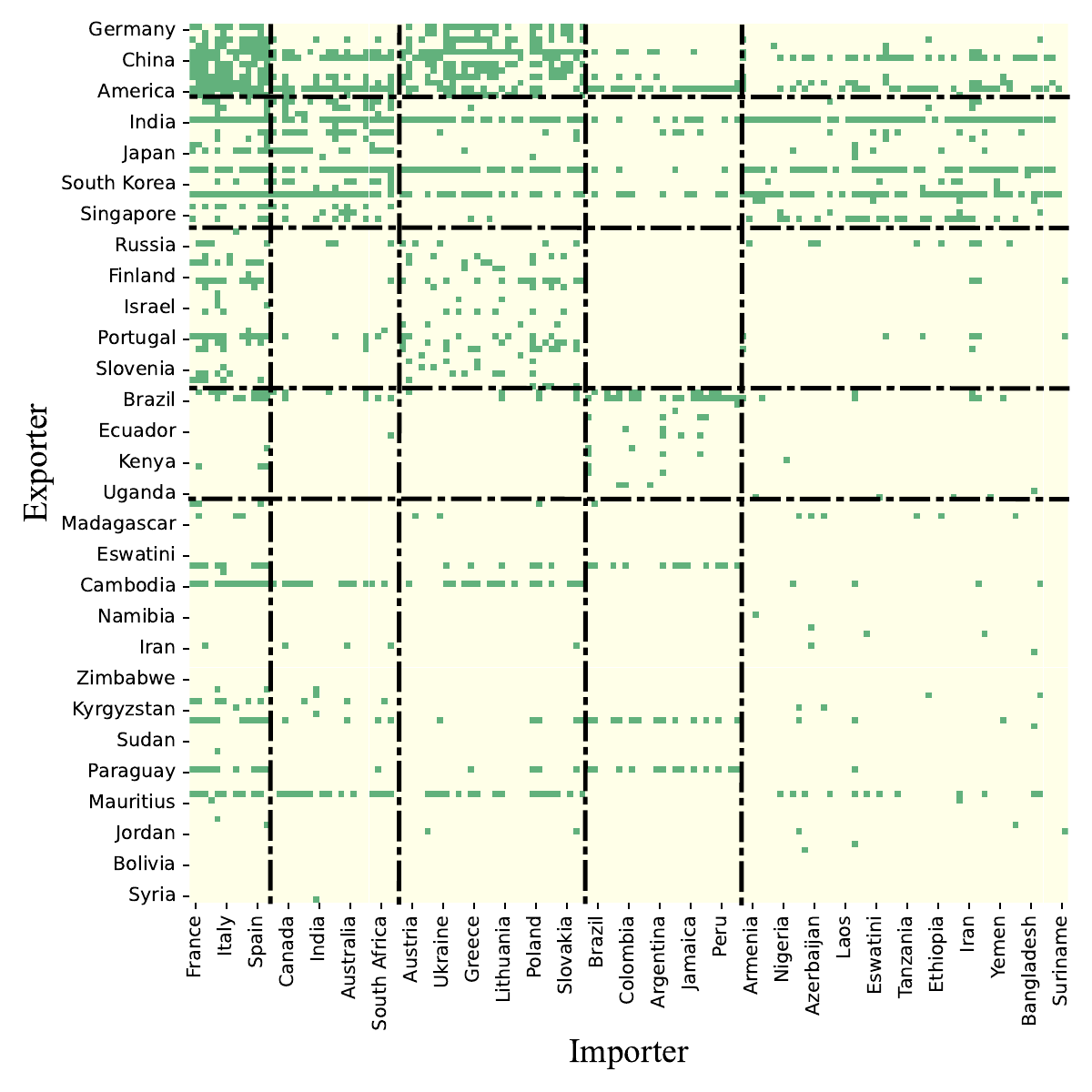}
        \vspace{-0.16\textwidth}
        \caption{Rice, paddy}
    \end{subfigure}
    
    \caption{Visualization of the adjacency matrices for six products. The colors, node order, and community partition are identical to Figure \ref{Adj_sum}.}
    \label{Adjs}
\end{figure*}

Visualizations of the aggregated adjacency matrix and several single-layer adjacency matrices are also presented, which can illustrate the advantages of multi-layer networks and further highlight the difference between sending and receiving patterns. Figure \ref{Adj_sum} illustrates the network of the products listed in Table \ref{items}, aggregated in a summation manner. In Figure \ref{Adjs}, the adjacency matrices for the six products featured in Table \ref{items} are displayed. To highlight the clustering outcomes, the order of the nodes (countries) is adjusted accordingly and persists in each plot. Due to space limitations, only a portion of the countries are labeled on the axes. It is clear that our clustering results have a significant community structure, and the row (export) clusters are quite different from the column (import) clusters, as reflected by the fact that the row and column clusters in the same country have distinctly different compositions and different community sizes. Such asymmetric information cannot be captured by undirected networks. More significantly, a comparison between the individual adjacency matrices in Figure \ref{Adjs} and Figure \ref{Adj_sum} reveals that a single-layer network only encapsulates a fraction of the community structure information, and aggregating the individual trade networks for multiple products provides a more comprehensive picture of trade relations among countries.
This aligns with our original intention of dealing with multi-layer directed networks. 

Finally, we highlight the potential loss of cluster divisions through direct aggregation by applying the \textsf{Sum} method to the WFAT data. We also illustrate that a single-layer network is insufficient for comprehensive cluster information by quantifying layer-wise block probability matrices. See Appendix \ref{AppendixE} for these results. The Python code to run simulations and analysis for this paper is available at \href{https://github.com/WenqingSu/DSoG}{https://github.com/WenqingSu/DSoG}.

\section{Conclusion}\label{conclusion}
In this paper, we studied the problem of detecting co-clusters in multi-layer directed networks. We typically assumed that the multi-layer directed network is generated from the multi-layer ScBM, which allows different row and column clusters via different patterns of sending and receiving edges. The proposed method \textsf{DSoG} is formulated as the spectral co-clustering based on the SoG matrices of the row and column spaces, where a bias correction step is implemented. 
We systematically studied the algebraic properties of the population version of \textsf{DSoG}. In particular, we did not require that the corresponding block probability matrix be of full rank. We also studied the misclassification error rates of \textsf{DSoG}, which show that under certain condition, multiple layers can bring benefit to the clustering performance. We finally conducted numerical experiments on a number of simulated and real examples to support the theoretical results.

There are many ways to extend the content of this paper. First, we focused on the scenario where the layer-wise community structures are homogeneous. It is of great interest to extend the scenario to associated but inhomogeneous community structures, which has been studied recently for multi-layer undirected networks \citep{han2015consistent, pensky2019spectral, chen2022global}. Second, we analyzed the WFAT dataset with respect to a particular year. However, this dataset contains the annual trading data from 1986 to the present and is continuously updated, which actually constitutes a \emph{higher-order} multi-layer network that contains more adequate information than a single multi-layer network. It is thus of great importance to develop a corresponding toolbox for this higher-order multi-layer network. Third, it is important to theoretically study the estimation of the number of communities \citep{fishkind2013consistent,ma2021determining}.


\appendix

\renewcommand{\thesection}{\Alph{section}}
\renewcommand{\thesubsection}{\Alph{subsection}}



\section*{Appendix} \label{appendix}
Appendix \ref{appendixA} provides the technical theorem and lemma that are needed to prove the misclassification rates. Appendix \ref{appendixB} includes the proof of main theorem and lemmas in the main text. Appendix \ref{AppendixC} contains additional theoretical results. Appendix \ref{AppendixD} contains auxiliary lemmas. Appendix \ref{AppendixE} provides the additional results for simulation and real data analysis.

\subsection{Technical theorem and lemma}\label{appendixA}
\begin{myThm}\label{sparse}
	Let $A_l \in \{0, 1\}^{n\times n}$ be the adjacency matrices generated by a multi-layer ScBM and $X_l = A_l - \bar P_l$ be the asymmetric noise matrices for all $1 \leq l \leq L$, where $\bar P_l = \mathbb{E}(A_l)$. If $L^{1/2}n\rho \geq c_1\log(L+n)$ and $n\rho \leq c_2$ for positive constants $c_1$ and $c_2$, then $E_1$,  the off-diagonal part of $\sum_{l=1}^L X_lX_l^T $, satisfies 
	\begin{equation*}
		\|E_1\|_2 \leq cL^{1/2}n\rho \log^{1/2}(L+n)
	\end{equation*}
	with probability at least $1-O((L+n)^{-1})$ for some constant $c>0$.
\end{myThm}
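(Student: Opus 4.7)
The strategy is to control $\|E_1\|_2$ by (i) reducing the quadratic chaos in the entries of $\{X_l\}$ to a bilinear form in independent copies via a decoupling inequality, and (ii) applying a rectangular matrix Bernstein inequality to the decoupled sum. Writing $X_l X_l^T = \sum_{k=1}^n x_{l,k} x_{l,k}^T$ with $x_{l,k}$ the $k$-th column of $X_l$ yields the rank-one expansion
\begin{equation*}
E_1 \;=\; \sum_{l=1}^L \sum_{k=1}^n \bigl( x_{l,k} x_{l,k}^T - {\rm diag}(x_{l,k} x_{l,k}^T) \bigr),
\end{equation*}
where the $Ln$ vectors $\{x_{l,k}\}$ are mutually independent (across both layers and columns, by the ScBM assumption), each with independent, mean-zero, $[-1,1]$-bounded entries whose variances are at most $\rho$.

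\textbf{Decoupling.} Because the quadratic structure couples different rows of $X_l$ through the summation over $k$, a direct layer-wise matrix Bernstein will not give a sharp bound. Following the decoupling inequality of \citet{de1995decoupling} (as implemented by \citet{LeiJ2022Bias} in the symmetric setting), I would introduce independent copies $\tilde x_{l,k}$ of $x_{l,k}$ and obtain
\begin{equation*}
\mathbb{P}\bigl( \|E_1\|_2 \geq t \bigr) \;\leq\; C\, \mathbb{P}\bigl( \|X \tilde X^T\|_2 \geq c\, t \bigr),
\end{equation*}
where $X := [X_1, \dots, X_L] \in \mathbb{R}^{n \times Ln}$ and $\tilde X$ is an independent copy. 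The asymmetry of each $X_l$, which distinguishes our setting from \citet{LeiJ2022Bias}, enters precisely here, since the two sides of the bilinear form $X \tilde X^T$ are no longer related by transposition.

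\textbf{Rectangular matrix Bernstein conditional on $X$.} Writing $X \tilde X^T = \sum_{l,k} x_{l,k} \tilde x_{l,k}^T$ and conditioning on $X$, this is a sum of $Ln$ independent rank-one random matrices in $\tilde x_{l,k}$. Using $\mathbb{E}[\|\tilde x_{l,k}\|_2^2] \leq n\rho$ and $\mathbb{E}[\tilde x_{l,k} \tilde x_{l,k}^T] \preceq \rho I$, the two matrix-variance statistics satisfy $\|\sum_{l,k} \mathbb{E}[x_{l,k} \tilde x_{l,k}^T \tilde x_{l,k} x_{l,k}^T \mid X]\|_2 \leq n\rho \|X\|_2^2$ and $\|\sum_{l,k} \mathbb{E}[\tilde x_{l,k} x_{l,k}^T x_{l,k} \tilde x_{l,k}^T \mid X]\|_2 \leq \rho \|X\|_F^2$. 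An \emph{a priori} high-probability bound $\|X\|_2 \lesssim \sqrt{Ln\rho}$, obtained from the Bandeira--van Handel inequality for rectangular matrices with independent bounded entries (valid under the hypothesis $L^{1/2} n\rho \gtrsim \log(L+n)$), then pins both variance proxies down to $O(L n^2 \rho^2)$. Matrix Bernstein delivers the desired leading term $\sqrt L \, n\rho \, \log^{1/2}(L+n)$; the conditioning on $X$ is removed by intersecting with the high-probability event $\{\|X\|_2 \lesssim \sqrt{Ln\rho}\}$, whose complement contributes $O((L+n)^{-1})$ to the failure probability.

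\textbf{Main obstacle.} The principal difficulty lies in the uniform bound needed by matrix Bernstein in the sparse regime $n\rho \leq c_2$: the naive estimate $\|x_{l,k}\tilde x_{l,k}^T\|_2 \leq n$ (from $\|x_{l,k}\|_2 \leq \sqrt n$) would produce an $n\log(L+n)$ remainder not absorbed by the hypothesis $L^{1/2} n\rho \gtrsim \log(L+n)$. I would resolve this by truncating each $x_{l,k}$ at a scale dictated by a scalar Bernstein bound on $\|x_{l,k}\|_2^2 = \sum_i X_{l,ik}^2$ (whose deviation from $n\rho$ is only logarithmic in $L+n$ even in the sparse regime), applying matrix Bernstein to the truncated sum, and controlling the excluded events through a union bound over the $Ln$ vectors. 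This careful bookkeeping of truncation, coupled with the decoupling step tailored to the off-diagonal restriction and the asymmetry of each $X_l$, forms the bulk of the technical work.
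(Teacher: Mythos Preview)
Your strategy---decoupling followed by a conditional matrix concentration bound---coincides with the paper's, and the variance calculation (both proxies of order $Ln^2\rho^2$, leading to the target $L^{1/2}n\rho\log^{1/2}(L+n)$) is correct. The implementation, however, differs from the paper's in two linked respects, one of which leaves a small gap.

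\emph{Where you diverge.} After decoupling, the paper does not condition on $X$ and apply a rank-one matrix Bernstein. Instead it writes $\sum_l X_l\tilde X_l^T=\sum_l X_l\tilde A_l^T-\sum_l X_l\bar P_l^T$, conditions on the nonnegative matrices $\tilde A_l$, and applies the matrix Bernstein of \cite{LeiJ2022Bias} for sums of the form $\sum_l W_lH_l$ (their Theorem 3, the paper's Lemma~\ref{N2_lemma}). This requires an a~priori bound on $\|\sum_l\tilde A_l\tilde A_l^T\|_2$, which the paper obtains by a \emph{second} decoupling plus a Perron--Frobenius/$\|\cdot\|_{1,\infty}$ argument. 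Your route is arguably cleaner here: a single Bandeira--van~Handel bound on $\|X\|_2$ replaces that whole nested step.

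\emph{The gap.} The cost of using rank-one matrix Bernstein is the uniform bound. After your truncation, $\max_{l,k}\|x_{l,k}\|_2\|\tilde x_{l,k}\|_2\lesssim\log(L+n)$, so the Bernstein remainder is $R\log n\asymp\log^2(L+n)$. Absorbing this into $L^{1/2}n\rho\log^{1/2}(L+n)$ needs $L^{1/2}n\rho\gtrsim\log^{3/2}(L+n)$, which is $\log^{1/2}(L+n)$ stronger than the stated hypothesis. The paper avoids this because the remainder parameter in Lemma~\ref{N2_lemma} is $b\max_l\|H_l\|_{2,\infty}=\max_{l,i}\sqrt{\tilde d_{l,i}^{in}}\lesssim\log^{1/2}(L+n)$, not a product of two column norms. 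The simplest fix within your framework is to keep your conditioning on $X$ and your Bandeira--van~Handel bound $\|X\|_2^2=\|\sum_l X_lX_l^T\|_2\lesssim Ln\rho$, but replace the rank-one Bernstein by Lemma~\ref{N2_lemma} applied to $\sum_l \tilde X_l X_l^T$ with $H_l=X_l^T$; its remainder parameter is then $\max_l\|X_l^T\|_{2,\infty}\lesssim\log^{1/2}(L+n)$, which closes the gap.

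Two minor points: decoupling produces the off-diagonal $\tilde E_1$, not the full $X\tilde X^T$, so you must also bound $\|\mathrm{diag}(X\tilde X^T)\|_2$ separately (scalar Bernstein suffices, as the paper does); and your displayed decoupling inequality should therefore read $\mathbb{P}(\|E_1\|_2\geq t)\leq C\,\mathbb{P}(C\|\tilde E_1\|_2\geq t)$ rather than with $X\tilde X^T$ on the right.
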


To handle the complicated dependence in $E_1$ caused by the quadratic form, the key idea in the proof is viewing $E_1$ as a matrix-valued U-statistic with a centered kernel function of order two, indexed by the pairs $(i, j)$, and using the decoupling technique described in Lemma \ref{decoupling}, which reduces problems on dependent variables to problems on related (conditionally) independent variables. This type of proof technique is also used in \cite{LeiJ2022Bias}, dealing with the symmetric case. Specifically, rearrange $E_1$ as
	\begin{equation}\label{E1}
		E_1 = \sum_{l=1}^L \sum_{(i,\;j)\neq(i',\;j')}X_{l,ij}X_{l,i'j'}e_ie_{i'}^TI_{\{j = j'\}}, 
	\end{equation}
	where $e_i$ is the standard basis vector in $\mathbb{R}^n$ and pairs ${(i,j), \;(i',j')} \in \{1,2,\ldots,n\}^2$, which can be viewed as a matrix-valued U-statistic defined on the vectors $X_{1,ij}, \ldots, X_{L,ij}$ indexed by the pairs $(i,j)$. The decoupling technique reduces the problem of bounding $\|E_1\|_2$ to that of bounding $\|\sum_{l=1}^LX_l\widetilde{X}_l^T\|_2$ and $\|\mbox{diag}(\sum_{l=1}^LX_l\widetilde{X}_l^T)\|_2$, where $\widetilde{X}_l$ is an independent copy of $X_l$ for all $1 \leq l \leq L$.

\begin{proof}	
	To use the decoupling argument, define 
	\begin{gather*}
		\widetilde{E}_1 = \sum_{l=1}^L \sum_{(i,\;j)\neq(i',\;j')}X_{l,ij}\widetilde{X}_{l,i'j'}e_ie_{i'}^TI_{\{j = j'\}}, \\ 
		\widetilde{E}_2 = \sum_{l=1}^L \sum_{(i,\;j)}X_{l,ij} \widetilde{X}_{l, ij}e_ie_i^T,
	\end{gather*}
	and 
	\begin{equation*}
		\widetilde{E} = \sum_{l=1}^L X_l\widetilde{X}_l^T = \widetilde{E}_1 + \widetilde{E}_2.
	\end{equation*}
	where $\widetilde{E}_1$ is the zero mean off-diagonal part and $\widetilde{E}_2$ is the diagonal part.
	Note that $\|\widetilde{E}_1\|_2 \leq \|\widetilde{E}\|_2 + \|\widetilde{E}_2\|_2$, we control the spectral norm of $\widetilde{E}$ and $\widetilde{E}_2$ separately.
	
	\noindent \textbf{First step: Controlling $\widetilde{E}$.} Recall that $\widetilde{X}_l = \widetilde{A}_l - \bar P_l$, where $\widetilde{A}_l$ is an independent copy of $A_l$, we reformulate $\widetilde{E}$ as
	\begin{equation*}
		\widetilde{E} = \sum_{l=1}^LX_l\widetilde{X}_l^T = \sum_{l=1}^LX_l\widetilde{A}_l^T - \sum_{l=1}^LX_l\bar P_l^T.
	\end{equation*}
	
	Applying Lemma \ref{N2_lemma} with $(v, b) = (2\rho, 1)$, if $L^{1/2}n\rho^{1/2} \geq c_1\log^{1/2}(L+n)$ for some constant $c_1>0$, we have with probability at least $1-O((L+n)^{-1})$ and universal constant $c>0$,
	\begin{equation}\label{XlPl^T}
		\|\sum_{l=1}^LX_l\bar P_l^T\|_2 \leq cL^{1/2}n^{3/2}\rho^{3/2}\log^{1/2}(n+L).
	\end{equation}  
	Note that the constant $c$ may be different from line to line in this proof.
	
	In order to apply Lemma \ref{N2_lemma} to control $\sum_{l=1}^LX_l\widetilde{A}_l^T$ conditioning on $\widetilde{A}_1, \ldots, \widetilde{A}_L$, we need to upper bound $\max_l\|\widetilde{A}_l^T\|_{2,\infty}$, $\sum_{l=1}^L\|\widetilde{A}_l^T\|_F^2$ and $\|\sum_{l=1}^L\widetilde{A}_l\widetilde{A}_l^T\|_2$, respectively. Note that $\widetilde{A}_l$ is an independent copy of $A_l$, now we bound the corresponding version of $A_l$.
	
	We first consider $\max_l\|A_l^T\|_{2,\infty}$. Note that $\mathbb{E}d_{l,i}^{in} \leq n\rho$. Applying a union bound over $i\in [n]$ and $l\in[L]$, along with the standard Bernstein's inequality
	\begin{equation*}
		\mathbb{P}(d_{l,i}^{in} - \mathbb{E}d_{l,i}^{in} \geq t) \leq \exp\left(-\frac{t^2/2}{n\rho + t}\right)
	\end{equation*}
	for all $t>0$, and using the assumption that $n\rho \leq c_2$, then with probability at least $1-O((L+n)^{-1})$, we have
	\begin{eqnarray}\label{max d_i}
		\max_l\|A_l^T\|_{2,\infty} = \max_{l, i}\sqrt{d_{l, i}^{in}} \leq c\log^{1/2}(L+n).
	\end{eqnarray}
		
	For $\sum_{l=1}^L\|A_l^T\|_F^2$, using Bernstein's inequality for $\sum_{l=1}^Ld_{l,i}^{in}$ and the assumption that $L^{1/2}n\rho \geq c_1\log(L+n)$, we have with probability at least $1-O((L+n)^{-1})$,
	\begin{eqnarray}\label{max d_i sum}
		\sum_{l=1}^L\|A_l^T\|_F^2 \leq n\max_i\sum_{l=1}^Ld_{l,i}^{in} \leq cLn^2\rho.
	\end{eqnarray}
	
	Now we turn to $\|\sum_{l=1}^LA_lA_l^T\|_2$. To begin with, decompose $\sum_{l=1}^LA_lA_l^T$ as $F_1 + F_2$, where $F_1$ is the off-diagonal part, which can be viewed as a matrix-valued U-statistic as in (\ref{E1}), and $F_2$ is the diagonal part. 
	For the off-diagonal part $F_1$, $\widetilde{F}_1$ is defined as the off-diagonal part of $\sum_{l=1}^LA_l\widetilde{A}_l^T$ for the purpose of using the decoupling technique. Using symmetrization and Perron-Frobenius theorem, we have
	\begin{equation*}
		\|\widetilde{F}_1\|_2 =\begin{Vmatrix} 0&\widetilde{F}_1\\ \widetilde{F}_1^T&0 \end{Vmatrix}_2 \leq \max\left\{\|\widetilde{F}_1\|_{1,\infty}, \|\widetilde{F}_1^T\|_{1,\infty}\right\} \leq \max\left\{\left\|\sum_{l=1}^LA_l\widetilde{A}_l^T \right\|_{1,\infty}, \left\|\sum_{l=1}^L\widetilde{A}_lA_l^T \right\|_{1,\infty}\right\}.
	\end{equation*} 
	By simple calculations, the $l_1$ norm of the $i$th row of $\sum_{l=1}^LA_l\widetilde{A}_l^T$ is $\sum_{l=1}^L\sum_{j=1}^nA_{l,ij}\widetilde{d}_{l,j}^{in}$. 
Combining (\ref{max d_i}) and (\ref{max d_i sum}) with Bernstein's inequality, we have
	\begin{eqnarray*}
		& &\mathbb{P}\left(\left.\sum_{l=1}^L\sum_{j=1}^nA_{l,ij}\widetilde{d}_{l,j}^{in} - \sum_{l=1}^L\sum_{j=1}^n\mathbb{E}(A_{l,ij})\widetilde{d}_{l,j}^{in} \geq t \right|\widetilde{A}_1,\ldots, \widetilde{A}_L\right)\\
		&\leq & \exp\left(-\frac{t^2/2}{cLn^2\rho^2\log(L+n) + c\log(L+n)t} \right)
	\end{eqnarray*}
	for all $t>0$. By applying the total probability theorem, the union bound, and  the assumption that $L^{1/2}n\rho \geq c_1\log(L+n)$, we have with high probability, $\max_i\sum_{l=1}^L\sum_{j=1}^nA_{l,ij}\widetilde{d}_{l,j}^{in} \leq cL^{1/2}n\rho\log(L+n) \leq cLn^2\rho^2$. A similar argument is conducted for $\sum_{l=1}^L\widetilde{A}_lA_l^T $. By Lemma \ref{decoupling} we have 
	\begin{equation}\label{F1}
		\|F_1\|_2 \leq cLn^2\rho^2
	\end{equation}
	with high probability. For the diagonal part $F_2$, applying (\ref{max d_i sum}), we have 
	\begin{equation}\label{F2}
		\|F_2\|_2 = \max_i\sum_{l=1}^Ld_{l,i}^{out} \leq cLn\rho
	\end{equation}
	 with high probability. 
  Combining (\ref{F1}), (\ref{F2}) with the assumption that $n\rho \leq c_2$, we have with probability at least $1-O((L+n)^{-1})$,
	\begin{equation*}
		\left\|\sum_{l=1}^LA_lA_l^T\right\|_2 \leq cLn\rho.
	\end{equation*}
	So combining the bounds for $\max_l\|\widetilde{A}_l^T\|_{2,\infty}$, $\sum_{l=1}^L\|\widetilde{A}_l^T\|_F^2$ and $\|\sum_{l=1}^L\widetilde{A}_l\widetilde{A}_l^T\|_2$, and applying Lemma \ref{N2_lemma}, we have with probability at least $1-O((L+n)^{-1})$,
	\begin{equation*}
		\left\|\sum_{l=1}^LX_l\widetilde{A}_l^T\right\|_2 \leq cL^{1/2}n\rho\log^{1/2}(L+n).
	\end{equation*}
	
	Combining this with (\ref{XlPl^T}), we have with probability at least $1-O((L+n)^{-1})$, 
	\begin{equation*}
		\|\widetilde{E}\|_2 \leq cL^{1/2}n\rho\log^{1/2}(L+n).
	\end{equation*}
	
	\noindent \textbf{Second step: Controlling $\widetilde{E}_2$.} Recall that $\widetilde{E}_2$ is a diagonal matrix whose $i$th diagonal element is $\sum_{l=1}^L\sum_{j=1}^nX_{l,ij}\widetilde{X}_{l,ij}$. Using standard Bernstein's inequality and union bound, we have with  probability at least $1-O((L+n)^{-1})$,
	\begin{equation*}
		\|\widetilde{E}_2\|_2 \leq cL^{1/2}n\rho\log^{1/2}(L+n).
	\end{equation*}
	
	The claim follows by combining the bounds for $\|\widetilde{E}\|_2$ and $\|\widetilde{E}_2\|_2$ together with the decoupling inequality in Lemma \ref{decoupling}. 
\end{proof}

\begin{myLe}\label{kth eigenvalue}
	Let $H$ be an $n\times s$ matrix with full column rank and the $s$th eigenvalue of $HH^T$ is at least $h$ for some constant $h>0$. Let $G$ be an $s\times s$ symmetric matrix with ${\rm rank}(G) = j \leq s$ and the $j$th eigenvalue of $G$ is at least $g$ for some constant $g>0$. Then $\lambda_j(HGH^T) \geq hg$.
\end{myLe}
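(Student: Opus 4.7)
The plan is to exploit the spectral structure of $G$ and reduce the statement to a routine bound on the smallest singular value of a three-factor product via the variational characterization.

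First, since $G$ is symmetric of rank exactly $j$ with its $j$-th largest eigenvalue at least $g > 0$, the remaining $s - j$ eigenvalues must all vanish, so $G$ is positive semidefinite with $j$ positive eigenvalues each at least $g$. Applying the spectral theorem, I write $G = W D W^T$ with $W \in \mathbb{R}^{s\times j}$ having orthonormal columns (the eigenvectors associated with the nonzero eigenvalues) and $D \in \mathbb{R}^{j\times j}$ diagonal whose entries are all at least $g$. Then
\begin{equation*}
HGH^T = HWDW^TH^T = MM^T, \qquad M := HWD^{1/2} \in \mathbb{R}^{n\times j},
\end{equation*}
so $\lambda_j(HGH^T) = \sigma_j(M)^2$, and it suffices to prove $\sigma_j(M)^2 \geq hg$.

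Next, I lower bound $\sigma_j(M)$ using $\sigma_j(M)^2 = \min_{\|y\|_2 = 1}\|My\|_2^2$. For any unit $y \in \mathbb{R}^j$, set $z := D^{1/2}y$; since every diagonal entry of $D$ is at least $g$, we get $\|z\|_2^2 = y^TDy \geq g$. Because $W$ has orthonormal columns, $\|Wz\|_2 = \|z\|_2$, and hence
\begin{equation*}
\|My\|_2^2 = \|HWz\|_2^2 = (Wz)^T H^TH (Wz) \geq \lambda_{\min}(H^TH)\,\|Wz\|_2^2 = \lambda_{\min}(H^TH)\,\|z\|_2^2 \geq g\,\lambda_{\min}(H^TH).
\end{equation*}

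Finally, I match the two spectral hypotheses. Because $H$ has full column rank $s$, the matrix $H^TH$ is $s\times s$ and positive definite, and its eigenvalues coincide with the $s$ nonzero eigenvalues of $HH^T$. Therefore the $s$-th eigenvalue of $HH^T$ equals $\lambda_{\min}(H^TH)$, and the hypothesis gives $\lambda_{\min}(H^TH) \geq h$. Substituting yields $\sigma_j(M)^2 \geq hg$, whence $\lambda_j(HGH^T) \geq hg$.

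There is no genuine obstacle in the argument — it is a short chain of elementary variational inequalities. The only fine point worth flagging is the passage from the assumed lower bound on the $s$-th eigenvalue of the $n\times n$ matrix $HH^T$ to the same lower bound on $\lambda_{\min}(H^TH)$, which requires (and only requires) the full-column-rank hypothesis on $H$. No heavier machinery such as Ostrowski's product formula or perturbation bounds is needed.
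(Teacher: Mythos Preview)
Your proof is correct. Both your argument and the paper's are elementary variational arguments, but they are organized differently. The paper works directly with the Courant--Fischer minimax characterization: it exhibits a $j$-dimensional subspace of $\mathbb{R}^n$ (namely $\{x:\,x\perp\mathrm{null}(H^T),\ H^Tx\perp\mathrm{null}(G)\}$) on which the Rayleigh quotient $x^THGH^Tx/x^Tx$ is bounded below by $hg$, using $y^TGy\geq g\|y\|^2$ on the range of $G$ and $x^THH^Tx\geq h\|x\|^2$ on the row space of $H$. You instead make the positive-semidefiniteness of $G$ explicit, factor $G=WDW^T$ with $D\succeq gI_j$, and reduce to bounding the smallest singular value of $M=HWD^{1/2}$ via $\lambda_{\min}(H^TH)\geq h$. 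Your route is slightly more concrete and avoids having to identify the correct subspace for Courant--Fischer; the paper's route keeps everything at the level of Rayleigh quotients without writing down an explicit square root. Both buy the same bound with the same hypotheses, and neither needs anything beyond the standard variational characterization.
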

\begin{proof}
	Note that $\mbox{rank}(H) = s$, we have $\mbox{dim}\{x \in \mathbb R^n~|~x\perp \mbox{null}(H^T)\} = s$, where $\mbox{null}(H^T)$ is the nullspace of $H^T$ and is given by  $\mbox{null}(H^T):=\{\theta \in \mathbb R^n~|~ H^T\theta=0\}$. Combining this with $\mbox{rank}(G) = j$, we have $\mbox{dim}\{x\in \mathbb R^n~|~x\perp \mbox{null}(H^T),\;H^Tx\perp\mbox{null}(G) \} = s$. Thus 
	\begin{equation*}
		\min_{x\in \mathbb R^n, \; x\perp \mbox{null}(H^T),\;H^Tx\perp \mbox{null}(G)} \frac{x^THGH^Tx}{x^Tx} \geq g\cdot \min_{x\in \mathbb R^n, \;x\perp \mbox{null}(H^T)}\frac{x^THH^Tx}{x^Tx} \geq hg.
	\end{equation*}
	Applying the Courant-Fischer minimax theorem (Theorem 8.1.2 of \cite{golub2013matrix}), intersecting on this event, we have
	\begin{equation*}
		\lambda_j(HGH^T)=\max_{\mbox{dim}(\mathscr{H})=j} \min_{0 \neq x \in \mathscr{H}} \frac{x^THGH^Tx}{x^Tx} \geq hg,
	\end{equation*}
	where $\mathscr{H}$ is the subspace of $\mathbb R^n$. The proof is completed.
\end{proof}

\subsection{Main proofs}\label{appendixB}

\subsubsection*{Proof of Theorem \ref{row_thm}}
We decompose the matrix $S^R$ into the sum of a signal term and noise terms. 

Recall that $\mathcal P_l = \rho YB_lZ^T$ and $\bar P_l = \mathcal P_l - \mbox{diag}(\mathcal P_l)$.
Define 
\begin{equation}\label{N4}
	N_4 = \mbox{diag}(\sum_{l=1}^LX_lX_l^T) - \sum_{l=1}^LD_l^{out},
\end{equation}
and recall the definition of $S^R$ in (\ref{S^R}) and the decomposition (\ref{sumAA^T}), we have 
\begin{equation*}
	S^R = \sum_{l=1}^L\mathcal P_l\mathcal P_l^T + N_1 + N_2 + N_3 + N_4.
\end{equation*}

We first control the signal term. Recall that $\Delta_z = \mbox{diag}(\sqrt{n_1^z}, \ldots, \sqrt{n_{K_z}^z})$ is a $K_z \times K_z$ diagonal matrix, then $Z = \widetilde{Z}\Delta_z$ with $\widetilde{Z}$ being a column orthogonal matrix. By the balanced community sizes assumption and the number of column clusters $K_z$ is fixed, the minimum eigenvalue of $\Delta_z$ is lower bounded by $c_0n^{1/2}$ for some constant $c_0>1$. Then we have
\begin{equation}\label{C7_1}
	\sum_{l=1}^L\mathcal P_l\mathcal P_l^T = \rho^2 \sum_{l=1}^L Y B_l \Delta_z \widetilde{Z}^T \widetilde{Z} \Delta_z B_l^T Y^T  \succeq  c_0n\rho^2 Y \left[\sum_{l=1}^LB_lB_l^T \right] Y^T, 
\end{equation}
where $\succeq$ denotes the Loewner partial order, in particular, let $A$ and $B$ be two Hermitian matrices of order $p$, we say that $A\succeq B$ if $A - B$ is positive semi-definite. Note that $Y$ is of full column rank and the $K_y$th (and smallest) non-zero eigenvalue of $YY^T$ is lower bounded by $c_0n$ for some constant $c_0$, where we used the balanced community sizes assumption and the number of row clusters $K_y$ is fixed. Using Lemma \ref{kth eigenvalue} and Assumption \ref{asmp2}, we can lower bound the $K$th eigenvalue of $\sum_{l=1}^L\mathcal P_l\mathcal P_l^T$ to be
\begin{equation}\label{C7_2}
	\lambda_K(\sum_{l=1}^L\mathcal P_l\mathcal P_l^T) \geq cLn^2\rho^2
\end{equation}
for some constant $c>0$. Note that we use $c$, $c_0$, $c_1$ and $c_2$ to represent the generic constants and they may be different from line to line.

Then we bound each noise term respectively. The first noise term $N_1$ is non-random and satisfies
\begin{equation*}
	\|N_1\|_2 \leq n\|N_1\|_{\max} \leq Ln\rho^2.
\end{equation*}

For $N_2$, recalling that $X_l = A_l - \bar P_l$, where $X_{l,ij}(1\leq i,j \leq n)$ is generated independently from centered Bernoulli, and is therefore $(2\rho, 1)$-Bernstein. Using Lemma \ref{N2_lemma} and the fact that $\|\bar P_l^T \bar P_l\|_2 \leq n^2\rho^2$, $\|\bar P_l\|_2\leq n^2\rho^2$ and $\|\bar P_l\|_{2, \infty}\leq n^{1/2}\rho$, if $L^{1/2}n\rho^{1/2} \geq c_1\log(L+n)$ for some constant $c_1>0$, we have with probability at least $1-O((L+n)^{-1})$ and universal constant $c>0$,
\begin{equation*}
	\|N_2\|_2 \leq cL^{1/2}n^{3/2}\rho^{3/2}\log^{1/2}(n+L).
\end{equation*}  

For the noise term $N_3$, applying Theorem \ref{sparse}, we have 
\begin{equation*}
	\|N_3\|_2 \leq cL^{1/2}n\rho \log^{1/2}(L+n)
\end{equation*}
with probability at least $1-O((L+n)^{-1})$.

We next control $N_4$. The construction in (\ref{S2ii}) implies that
\begin{equation*}
	\|N_4\|_2 \leq Ln\rho^2.
\end{equation*}

Let $U$ and $\widehat{U}$ be the $n\times K$ matrices consisting of the leading eigenvectors of $\sum_{l=1}^L\mathcal P_l\mathcal P_l^T$ and $S^R$, respectively. We are now ready to bound the derivation of $\widehat{U}$ from $U$. 	
Combining the lower bound of signal term and upper bound of all noise terms, we have with high probability,
\begin{eqnarray*}
	\frac{\|N_1 + N_2 + N_3 + N_4\|_2}{\lambda_K (\sum_{l=1}^L\mathcal P_l\mathcal P_l^T)} &\leq& c \ \frac{Ln\rho^2 + L^{1/2}n\rho\log^{1/2}(L+n)}{Ln^2\rho^2} \\
	& \leq & \frac{c}{n} + \frac{c\log^{1/2}(L+n)}{L^{1/2}n\rho},
\end{eqnarray*}
where the first inequality arises from the merging of the $N_2$ and $N_3$ terms when $n\rho \leq c_2$ for some constant $c_2>0$. By Proposition 2.2 of \cite{vu2013minimax} and Davis-Kahan sin$\Theta$ theorem (Theorem VII.3.1 of \cite{Bhatia1997matrix}), there exists a $K \times K$ orthogonal matrix $O$ such that
\begin{eqnarray*}
	\|\widehat{U} - UO\|_F & \leq & \sqrt{K}\|\widehat{U} - UO\|_2 \\[6pt]
	& \leq & \frac{\sqrt{K}\|N_1 + N_2 + N_3 + N_4\|_2}{\lambda_K (\sum_{l=1}^L\mathcal P_l\mathcal P_l^T) - \|N_1 + N_2 + N_3 + N_4\|_2} \\[6pt]
	& \lesssim & \frac{1}{n}+\frac{\log^{1/2}(L+n)}{L^{1/2}n\rho},
\end{eqnarray*}
where $a_n \lesssim b_n$ means that there exists some positive constant $c$ such that $a_n \leq cb_n$ for all $n$. 
Finally, by using Lemma \ref{kmeans}, we are able to obtain the desired bound for the misclassification rate. $\hfill\qedsymbol$

\subsubsection*{Proof of Lemma \ref{row interpretation}}
Define $\Delta_y = \mbox{diag}(\sqrt{n_1^y}, \ldots, \sqrt{n_{K_y}^y})$ as a $K_y \times K_y$ diagonal matrix with $k$th diagonal entry being the $l_2$ norm of the $k$th column of $Y$. Then $Y\Delta_y^{-1}$ is a column orthogonal matrix. Similarly define $\Delta_z = {\rm diag}(\sqrt{n_1^z}, \ldots, \sqrt{n_{K_z}^z})$. Write $\mathcal P^R$ as
\begin{eqnarray*}
	\mathcal P^R & = & \rho^2Y \sum_{l=1}^LB_lZ^TZB_l^TY^T \\
	&=& \rho^2Y\Delta_y^{-1}\Delta_y\sum_{l=1}^LB_l\Delta_z^2B_l^T\Delta_y\Delta_y^{-1}Y^T = \rho^2Y\Delta_y^{-1}Q^RD^R{Q^R}^T\Delta_y^{-1}Y^T,
\end{eqnarray*}
where we used the eigendecomposition of $\Delta_y \sum_{l=1}^L B_l\Delta_z^2B_l^T \Delta_y$ is $Q^RD^R{Q^R}^T$. Here $Q^R$ is a $K_y\times K$ column orthogonal matrix and $D^R$ is a $K\times K$ diagonal matrix, which is due to the fact that $Z^TZ$ is a positive definite diagonal matrix, and hence the rank of $\sum_{l=1}^LB_lB_l^T$ is equal to the rank of $\Delta_y \sum_{l=1}^L B_l\Delta_z^2B_l^T \Delta_y$. 

 It is easy to see that $Y\Delta_y^{-1}Q^R$ has orthogonal columns, so we have
\begin{equation}\label{U=YlambdaQ^R}
	U = Y\Delta_y^{-1}Q^R.
\end{equation}

When $\sum_{l=1}^LB_lB_l^T$ is of full rank, that is, $K = K_y$, $\Delta_y^{-1}Q^R$ is invertible, thus $Y_{i\ast} = Y_{j\ast}$ if and only if $U_{i\ast} = U_{j\ast}$. The first claim follows by the fact that the rows of $\Delta_y^{-1}Q^R$ are perpendicular to each other and the $k$th row has length $\sqrt{n_k^y}$.

When $\sum_{l=1}^LB_lB_l^T$ is rank-deficient, that is, $K < K_y$, $Y_{i\ast} = Y_{j\ast}$ can also imply $U_{i\ast} = U_{j\ast}$ by (\ref{U=YlambdaQ^R}). On the other hand, by (\ref{U=YlambdaQ^R}), $\|U_{i\ast} - U_{j\ast}\|_2 := \|\frac{Q^R_{g_i^y\ast}}{\sqrt{n_{g_i^y}}} - \frac{Q^R_{g_j^y\ast}}{\sqrt{n_{g_j^y}}}\|_2$. The second claim follows by the rows of $\Delta_y^{-1}Q^R$ are mutually distinct with their minimum Euclidean distance being larger than a deterministic sequence $\{\zeta_n\}_{n\geq 1}$. $\hfill\qedsymbol$

\subsubsection*{Proof of Lemma \ref{row separable condition}}
By the balanced community sizes assumption and both $K_y$ and $K_z$ are fixed, we have	
 \begin{equation}\label{D upper}
 	\Delta_y\sum_{l=1}^LB_l\Delta_z^2B_l^T\Delta_y \preceq \frac{c_0}{K_z}n\Delta_y\sum_{l=1}^LB_lB_l^T\Delta_y \preceq c_0K_yLn\Delta_y^2 \preceq c_0^2Ln^2I_{K_y},
 \end{equation} 
 where we used $\Delta_z^2 \preceq \frac{c_0n}{K_z}I_{K_z}$, $\Delta_y^2 \preceq \frac{c_0n}{K_y}I_{K_y}$ and $\|\sum_{l=1}^L B_lB_l^T\|_2 \leq LK_yK_z$. Recall that the eigendecomposition of $\Delta_y \sum_{l=1}^L B_l\Delta_z^2B_l^T \Delta_y$ is $Q^RD^R{Q^R}^T$, (\ref{D upper}) implies that $D^R_{kk}$ is upper bounded by $c_0^2Ln^2$ for any $1\leq k \leq K$. 
  
 Define $\mathbb{B}^R := \sum_{l=1}^L B_l\Delta_z^2B_l^T$, by simple calculations, we have 
 \begin{equation*}
 	\mathbb{B}^R_{g_i^yg_j^y} = \sum_{k_z = 1}^{K_z}\sum_{l=1}^Ln_{k_z}^zB_{l,g_i^yk_z}B_{l,g_j^yk_z}
 \end{equation*}
 for all $1 \leq i, j \leq n$. Under the Assumption \ref{asmp1}, it is easy to see that	 
 \begin{equation*}
 	\mathbb{B}^R_{g_i^yg_j^y} \geq \frac{n}{c_0K_z}\left[\sum_{l=1}^LB_lB_l^T \right]_{g_i^yg_j^y}
 \end{equation*}
 and 
 \begin{equation*}
 	\mathbb{B}^R_{g_i^yg_j^y} \leq \frac{c_0n}{K_z}\left[\sum_{l=1}^LB_lB_l^T \right]_{g_i^yg_j^y}.
 \end{equation*}
 Let $\mu^r_n := c_0^2Ln^2$, by the decomposition (\ref{U=YlambdaQ^R}), we have
 \begin{eqnarray*}
 	\mu^r_n\|U_{i\ast} - U_{j\ast}\|_2^2 & = & \sum_{k=1}^K \mu^r_n (\frac{Q^R_{g_i^y k}}{\sqrt{n_{g_i^y}}}  - \frac{Q^R_{g_j^y k}}{\sqrt{n_{g_j^y}}})^2 \\ 
 	&\geq & \sum_{k=1}^K D^R_{kk} (\frac{Q^R_{g_i^y k}}{\sqrt{n_{g_i^y}}}  - \frac{Q^R_{g_j^y k}}{\sqrt{n_{g_j^y}}})^2 \\
 	& = & \sum_{k=1}^K D^R_{kk} (\frac{Q^R_{g_i^y k}}{\sqrt{n_{g_i^y}}})^2 + \sum_{k=1}^K D^R_{kk} (\frac{Q^R_{g_j^y k}}{\sqrt{n_{g_j^y}}})^2 - 2\sum_{k=1}^K D^R_{kk}\frac{Q^R_{g_i^y k}Q^R_{g_j^y k}}{\sqrt{n_{g_i^y}n_{g_j^y}}} \\[6pt]
 	& = & \mathbb{B}^R_{g_i^y g_i^y} + \mathbb{B}^R_{g_j^y g_j^y} - 2\mathbb{B}^R_{g_i^y g_j^y}\\
 	& \geq & \frac{n}{c_0K_z} \left[\sum_{l=1}^LB_lB_l^T \right]_{g_i^yg_i^y} + \frac{n}{c_0K_z} \left[\sum_{l=1}^LB_lB_l^T \right]_{g_j^yg_j^y} - 2\frac{c_0n}{K_z} \left[\sum_{l=1}^LB_lB_l^T \right]_{g_i^yg_j^y}\\
 	& \geq & c_0^2Ln^2\zeta_n^2
 \end{eqnarray*}
 for any $Y_{i\ast} \neq Y_{j\ast}$. $\hfill\qedsymbol$

\subsubsection*{Proof of Lemma \ref{column interpretation}}
By the fact that the rank of $\sum_{l=1}^LB_l^TB_l$ is equal to the rank of $\Delta_z \sum_{l=1}^L B_l^T\Delta_y^2B_l \Delta_z$, we have $Q^C$ is a $K_z\times K $ column orthogonal matrix. 	It is easy to see that $Z\Delta_z^{-1}Q^C$ has orthogonal columns, so we have
\begin{equation*}\label{U=YlambdaQ^C}
	V = Z\Delta_z^{-1}Q^C.
\end{equation*}
The rest proof is similar to that of Lemma \ref{row interpretation}, we omit it here. $\hfill\qedsymbol$

\subsubsection*{Proof of Lemma \ref{column separable condition}}
The proof of Lemma \ref{column separable condition}
follows the same strategy as that of Lemma \ref{row separable condition}. Here we only describe the difference.

Define $\mu^c_n := c_0^2Ln^2$ and $\mathbb{B}^C := \sum_{l=1}^L B_l^T\Delta_y^2B_l$, by simple calculations, we have 
 \begin{equation*}
 	\mathbb{B}^C_{g_i^zg_j^z} = \sum_{k_y=1}^{K_y}\sum_{l=1}^Ln_{k_y}^zB_{l,k_yg_i^z}B_{l,k_yg_j^z}
 \end{equation*}
 for all $1 \leq i, j \leq n$. Combining this with Assumption \ref{asmp1} and $V = Z\Delta_z^{-1}Q^C$, we have
 \begin{eqnarray*}
 	\mu^c_n\|V_{i\ast} - V_{j\ast}\|_2^2 
 	& \geq & \sum_{k=1}^{K^\prime} D^C_{kk} (\frac{Q^C_{g_i^z k}}{\sqrt{n_{g_i^z}}}  - \frac{Q^C_{g_j^z k}}{\sqrt{n_{g_j^z}}})^2 \\[6pt]
 	& = & \mathbb{B}^C_{g_i^z g_i^z} + \mathbb{B}^C_{g_j^z g_j^z} - 2\mathbb{B}^C_{g_i^z g_j^z}\\[6pt]
 	& \geq & \frac{n}{c_0K_y} \left[\sum_{l=1}^LB_l^TB_l \right]_{g_i^zg_i^z} +  \frac{n}{c_0K_y} \left[\sum_{l=1}^LB_l^TB_l \right]_{g_j^zg_j^z} - 2 \frac{c_0n}{K_y}\left[\sum_{l=1}^LB_l^TB_l \right]_{g_i^zg_j^z}\\
 	& \geq & c_0^2Ln^2\xi_n^2
 \end{eqnarray*}
 for any $Z_{i\ast} \neq Z_{j\ast}$.  $\hfill\qedsymbol$

\subsection{Additional theoretical results}\label{AppendixC}
Proposition \ref{dense_thm} provides the misclassification rate of \textsf{DSoG} under a dense regime with $n\rho \geq c_1\log(L+n)$ for some constant $c_1>0$. Proposition \ref{sog_thm} provides the misclassification rate of \textsf{SoG} algorithm, namely, the \textsf{DSoG} without the bias-adjustment step.

\begin{myProp}\label{dense_thm}
    Suppose that Assumptions {\rm \ref{asmp1}} and {\rm \ref{asmp2}}, and {\rm (\ref{row separable})} hold. If   $n\rho \geq c_1\log(L+n)$ for some constant $c_1>0$, then the output $\widehat{Y}$ of Algorithm {\rm \ref{alg1}} satisfies  
	\begin{equation*}
		\mathcal{L}(Y,\widehat{Y}) \leq \frac{c}{n{\zeta_n}^2}\left(\frac{1}{n^2} + \frac{\log(L+n)}{Ln\rho}\right)
	\end{equation*}
with probability at least $1-O((L+n)^{-1})$ for some constant $c>0$.
\end{myProp}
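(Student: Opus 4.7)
The plan is to mimic the proof of Theorem \ref{row_thm} line by line, modifying only the noise bound on $N_3$, which is the unique term in the decomposition \eqref{decomposition} whose magnitude is genuinely sensitive to the sparsity regime. The signal lower bound $\lambda_K(\sum_{l=1}^L\mathcal{P}_l\mathcal{P}_l^T) \gtrsim Ln^2\rho^2$ established through \eqref{C7_1}--\eqref{C7_2} relies only on Assumptions \ref{asmp1}--\ref{asmp2} and transfers verbatim. The deterministic bounds $\|N_1\|_2\leq Ln\rho^2$ and $\|N_4\|_2\leq Ln\rho^2$ carry over unchanged, and the control $\|N_2\|_2\lesssim L^{1/2}n^{3/2}\rho^{3/2}\log^{1/2}(L+n)$ via Lemma \ref{N2_lemma} only needs $L^{1/2}n\rho^{1/2}\gtrsim \log^{1/2}(L+n)$, which is implied by the dense hypothesis $n\rho\geq c_1\log(L+n)$.

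The main obstacle is producing a dense-regime analog of Theorem \ref{sparse}. As stated that theorem requires $n\rho\leq c_2$ and delivers $\|N_3\|_2 \lesssim L^{1/2}n\rho\log^{1/2}(L+n)$, which is too weak once $n\rho$ is of order $\log(L+n)$. The target is the sharper estimate
\begin{equation*}
\|N_3\|_2 \lesssim L^{1/2}n^{3/2}\rho^{3/2}\log^{1/2}(L+n),
\end{equation*}
which matches the bound on $N_2$. I would reuse the decoupling machinery in the original proof: rewrite $N_3$ as a matrix-valued U-statistic in $\{X_{l,ij}\}$ and apply Lemma \ref{decoupling} to reduce the task to bounding $\|\sum_{l=1}^L X_l\widetilde{X}_l^T\|_2$ and $\|\mathrm{diag}(\sum_{l=1}^L X_l\widetilde{X}_l^T)\|_2$ for an independent copy $\widetilde{X}_l$. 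Conditioning on $\widetilde{X}_1,\ldots,\widetilde{X}_L$ the sum is linear in $\{X_l\}$, so Lemma \ref{N2_lemma} applies. The critical difference from the sparse proof is that the three driving quantities $\max_l\|\widetilde{X}_l\|_{2,\infty}$, $\sum_l\|\widetilde{X}_l\|_F^2$ and $\|\sum_l\widetilde{X}_l\widetilde{X}_l^T\|_2$ can now be replaced, under $n\rho\geq c_1\log(L+n)$, by the sharper high-probability bounds $O(\sqrt{n\rho})$, $O(Ln^2\rho)$ and $O(Ln\rho)$ respectively via standard Bernstein inequalities on in- and out-degrees, removing the coarse $\log(L+n)$-driven estimates that dominated in the sparse regime.

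With these noise bounds in hand one assembles
\begin{equation*}
\frac{\|N_1+N_2+N_3+N_4\|_2}{\lambda_K(\sum_l\mathcal{P}_l\mathcal{P}_l^T)} \lesssim \frac{Ln\rho^2 + L^{1/2}n^{3/2}\rho^{3/2}\log^{1/2}(L+n)}{Ln^2\rho^2} \lesssim \frac{1}{n} + \frac{\log^{1/2}(L+n)}{(Ln\rho)^{1/2}},
\end{equation*}
applies the Davis--Kahan $\sin\Theta$ theorem together with Proposition 2.2 of \cite{vu2013minimax} to produce an orthogonal matrix $O$ with $\|\widehat{U}-UO\|_F$ of the same order (up to the constant $\sqrt{K}$), squares, and invokes Lemma \ref{kmeans} to convert it into the misclassification rate $\mathcal{L}(Y,\widehat{Y}) \lesssim (n\zeta_n^2)^{-1}\bigl(n^{-2} + \log(L+n)/(Ln\rho)\bigr)$. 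The high-probability guarantee $1-O((L+n)^{-1})$ is inherited from the Bernstein and decoupling estimates, each of which holds on an event of the same order.
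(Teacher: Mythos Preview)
Your proposal is correct and follows essentially the same route as the paper: reuse the signal bound and the bounds on $N_1,N_2,N_4$ verbatim, replace the sparse-regime control of $N_3$ by a dense-regime analog, observe that $N_2$ then dominates the noise, and finish via Davis--Kahan and Lemma \ref{kmeans}. The only cosmetic difference is that the paper outsources the dense $N_3$ bound to Theorem~4 of \cite{LeiJ2022Bias}, whereas you sketch how to rederive it by rerunning the decoupling argument with the sharper high-probability estimates $\max_l\|\widetilde{X}_l\|_{2,\infty}\lesssim\sqrt{n\rho}$, $\sum_l\|\widetilde{X}_l\|_F^2\lesssim Ln^2\rho$, and $\|\sum_l\widetilde{X}_l\widetilde{X}_l^T\|_2\lesssim Ln\rho$ available once $n\rho\gtrsim\log(L+n)$; this is a valid and self-contained alternative to citing the external theorem.
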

The proof of this bound is similar to that of Theorem {\rm \ref{row_thm}}, except that Theorem {\rm \ref{sparse}} used to control the noise term $N_3$ is not applicable (alternatively, Theorem 4 of \cite{LeiJ2022Bias} can be used), and the upper bound of the noise term $N_2$  becomes the dominant term.

\begin{myProp}\label{sog_thm}
    Under the same conditions as in Theorem {\rm \ref{row_thm}}, let $\widehat{Y}$ denote the estimated membership matrices with respect to the row clusters of the \textsf{SoG} method, we have 
    \begin{equation*}
		\mathcal{L}(Y,\widehat{Y}) \leq \frac{c}{n{\zeta_n}^2}\left(\frac{1}{n^2\rho^2} + \frac{\log(L+n)}{Ln^2\rho^2}\right)
	\end{equation*}
	with probability at least $1-O((L+n)^{-1})$ for some constant $c>0$.
\end{myProp}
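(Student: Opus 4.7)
The plan is to mirror the proof of Theorem \ref{row_thm} almost verbatim, tracking only the single noise term that changes when the bias-adjustment step is dropped. Writing $\widetilde{S}^R := \sum_{l=1}^L A_l A_l^T$, the decomposition (\ref{sumAA^T}) gives
$$\widetilde{S}^R - \sum_{l=1}^L \mathcal{P}_l \mathcal{P}_l^T = N_1 + N_2 + N_3 + \mathrm{diag}\Big(\sum_{l=1}^L X_l X_l^T\Big).$$
The bounds $\|N_1\|_2 \lesssim Ln\rho^2$, $\|N_2\|_2 \lesssim L^{1/2} n^{3/2} \rho^{3/2} \log^{1/2}(L+n)$, and $\|N_3\|_2 \lesssim L^{1/2} n \rho \log^{1/2}(L+n)$ (via Theorem \ref{sparse}) carry over unchanged, as does the signal lower bound $\lambda_K(\sum_{l=1}^L \mathcal{P}_l \mathcal{P}_l^T) \geq c L n^2 \rho^2$ established using Assumption \ref{asmp2} and Lemma \ref{kth eigenvalue}. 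The only new work is to bound the un-debiased diagonal $\mathrm{diag}(\sum_{l=1}^L X_l X_l^T)$ directly instead of the residual $N_4$.

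For this, I would use the entry-wise bound in (\ref{S2ii}): each diagonal entry is at most $Ln \max_{l,ij} \bar{P}_{l,ij}^2 + \sum_{l=1}^L d_{l,i}^{out}$. The first piece is $\lesssim Ln\rho^2$, which is negligible under $n\rho \leq c_3$. For the second, $\mathbb{E}[\sum_{l=1}^L d_{l,i}^{out}] \lesssim Ln\rho$, so applying Bernstein's inequality together with a union bound over $i \in [n]$, and noting that $L^{1/2} n\rho \geq c_2 \log(L+n)$ implies $L n\rho \gtrsim \log(L+n)$, gives $\max_i \sum_{l=1}^L d_{l,i}^{out} \lesssim Ln\rho$ with probability at least $1 - O((L+n)^{-1})$. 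Since a diagonal matrix's spectral norm equals its largest entry,
$$\Big\|\mathrm{diag}\Big(\sum_{l=1}^L X_l X_l^T\Big)\Big\|_2 \lesssim Ln\rho.$$

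Combining all four pieces, the total noise is dominated by $Ln\rho$ (and by $L^{1/2} n \rho \log^{1/2}(L+n)$ from $N_3$), which is a factor $\rho^{-1}$ larger than in the debiased case. Plugging into the Davis--Kahan $\sin\Theta$ argument (Proposition 2.2 of \cite{vu2013minimax} together with Theorem VII.3.1 of \cite{Bhatia1997matrix}) produces an orthogonal $O$ such that
$$\|\widehat{U} - UO\|_F \lesssim \frac{Ln\rho + L^{1/2} n \rho \log^{1/2}(L+n)}{Ln^2\rho^2} \lesssim \frac{1}{n\rho} + \frac{\log^{1/2}(L+n)}{L^{1/2} n \rho}.$$
Squaring this bound and invoking Lemma \ref{kmeans} under condition (\ref{row separable}) yields the stated misclassification rate $\mathcal{L}(Y,\widehat{Y}) \lesssim (n\zeta_n^2)^{-1}(n^{-2}\rho^{-2} + \log(L+n)/(Ln^2\rho^2))$.

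There is no genuine obstacle here beyond bookkeeping; the argument is essentially an ablation of Theorem \ref{row_thm}. The point worth emphasizing in the write-up is simply that the $\rho^{-2}$ degradation relative to Theorem \ref{row_thm} comes entirely from replacing $\|N_4\|_2 \lesssim Ln\rho^2$ by $\|\mathrm{diag}(\sum_l X_l X_l^T)\|_2 \lesssim Ln\rho$, which quantifies exactly what the bias-adjustment step in \textsf{DSoG} buys us.
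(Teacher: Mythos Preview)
Your proposal is correct and follows essentially the same route as the paper's own proof: the paper simply remarks that the argument is identical to Theorem~\ref{row_thm} except that the diagonal noise term (which it still calls $N_4$ by slight abuse) now satisfies $\|N_4\|_2 \leq c_1 Ln\rho$ rather than $Ln\rho^2$, which is exactly the change you track via (\ref{S2ii}) and Bernstein. Your closing remark that the $\rho^{-2}$ degradation in the first term is precisely the cost of dropping the bias-adjustment matches the paper's discussion as well.
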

The proof of this bound is similar to that of Theorem {\rm \ref{row_thm}}, except that the upper bound of the noise term $N_4$ in \eqref{N4} now satisfies $\|N_4\|_2 \leq c_1Ln\rho$ with high probability for some positive constant $c_1$. The primary improvement of \textsf{DSoG} over \textsf{SoG} is observed in the first term of the upper bound. As a result, when the network is sparse, namely, $\rho$ is small, the bias-adjustment step resulted in a significant reduction in the upper bound of the misclassification rate.

\subsection{Auxiliary lemmas}\label{AppendixD}
Given a random variable $X$, we say that Bernstein’s condition with parameters $v$ and $b$ holds if $\mathbb{E}[|X|^k] \leq \frac{v}{2}k!b^{k-2}$ for all integers $k\geq 2$. It is also said that $X$ is $(v, b)$-Bernstein.

\begin{myLe}[Theorem 3 in \cite{LeiJ2022Bias}]\label{N2_lemma}
	For $1 \leq l \leq L$, let $W_l$ be a sequence of independent $n\times n$ matrices with zero mean independent entries, and $H_l$ be any sequence of $n\times n$ non-random matrices. If for all $1 \leq l \leq L$ and $1 \leq i, j \leq n$, each entry $W_{l,ij}$ is $(v, b)$-Bernstein, then for all $t > 0$,
	\begin{equation*}
		\mathbb{P}\left( \| \sum_{l=1}^L W_lH_l\|_2 \geq t\right) \leq 4n\exp\left(-\frac{t^2/2}{v\max(n\|\sum_{l=1}^L H_l^T H_l\|_2,\;\sum_{l=1}^L\|H_l\|_F^2) + b\max_l\|H_l\|_{2, \infty}t} \right).
	\end{equation*}
\end{myLe}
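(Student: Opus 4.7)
My plan is to deduce the bound from a matrix Bernstein inequality after recasting $\sum_{l=1}^L W_l H_l$ as a sum of many independent, mean-zero, rank-one random matrices. Specifically, I would write
$$\sum_{l=1}^L W_l H_l \;=\; \sum_{l=1}^L \sum_{i,j=1}^n W_{l,ij}\, e_i (H_l)_{j\ast},$$
and view the summands $Z_{l,i,j} := W_{l,ij}\, e_i (H_l)_{j\ast}$ as $Ln^2$ independent mean-zero $n\times n$ random matrices. Since each $W_{l,ij}$ is $(v,b)$-Bernstein, the scalar $\|Z_{l,i,j}\|_2 = |W_{l,ij}|\,\|(H_l)_{j\ast}\|_2$ inherits Bernstein tails with effective sub-exponential scale at most $b\,\max_l\|H_l\|_{2,\infty}$, which will serve as the ``$R$'' parameter in the matrix Bernstein denominator.

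The next step is to compute the matrix-variance proxy. Exploiting the rank-one structure,
$$Z_{l,i,j} Z_{l,i,j}^T = W_{l,ij}^2\,\|(H_l)_{j\ast}\|_2^2\, e_i e_i^T, \qquad Z_{l,i,j}^T Z_{l,i,j} = W_{l,ij}^2\,(H_l)_{j\ast}^T (H_l)_{j\ast}.$$
Passing to expectations using $\mathrm{Var}(W_{l,ij}) \le v$, and summing over $(l,i,j)$ with $\sum_i e_i e_i^T = I_n$, I would obtain
$$\sum_{l,i,j}\mathbb{E}\bigl[Z_{l,i,j} Z_{l,i,j}^T\bigr] \preceq v \Bigl(\sum_l \|H_l\|_F^2\Bigr) I_n, \qquad \sum_{l,i,j}\mathbb{E}\bigl[Z_{l,i,j}^T Z_{l,i,j}\bigr] \preceq v\,n \sum_l H_l^T H_l,$$
the factor of $n$ in the second identity coming from summing over $i$. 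Taking spectral norms then yields $\sigma^2 \le v\,\max\bigl(\sum_l \|H_l\|_F^2,\ n\,\|\sum_l H_l^T H_l\|_2\bigr)$, precisely matching the variance appearing in the stated denominator.

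Finally, I would invoke a matrix Bernstein tail bound with Bernstein moment (rather than almost-sure) control on the summand norms: for independent mean-zero $n \times n$ random matrices with matrix variance $\sigma^2$ and effective sub-exponential scale $R$, this gives a bound of the form $(d_1 + d_2) \exp\bigl(-t^2/(2(\sigma^2 + Rt))\bigr)$, which with $d_1 = d_2 = n$ and a small loss in constants reproduces the stated $4n\exp(\cdots)$ prefactor. The main obstacle will be accommodating the unboundedness of the $W_{l,ij}$'s inside a genuinely matrix-valued concentration argument: I would either invoke the unbounded-entry matrix Bernstein of Tropp, which requires only Bernstein moment conditions on $\|Z_{l,i,j}\|_2$ and aligns exactly with the bookkeeping above, or carry out a truncation at level $\tau \asymp b\log(Ln)$, apply the bounded-entry matrix Bernstein to the truncated sum, and control the small residual via a union bound together with the Bernstein moment inequality. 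Either route yields the stated bound up to the tracked constants.
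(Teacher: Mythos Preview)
The paper does not actually supply a proof of this lemma: it is stated in Appendix~D as an auxiliary result and attributed verbatim to Theorem~3 of \cite{LeiJ2022Bias}, with no argument given. So there is no in-paper proof to compare against.

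That said, your proposed route is correct and is essentially the standard derivation of this type of bound. The rank-one decomposition $\sum_l W_lH_l=\sum_{l,i,j}W_{l,ij}\,e_i(H_l)_{j\ast}$ is exact, and your variance bookkeeping is right: summing $\mathbb{E}[Z_{l,i,j}Z_{l,i,j}^T]$ over $i$ via $\sum_i e_ie_i^T=I_n$ produces $v(\sum_l\|H_l\|_F^2)I_n$, while summing $\mathbb{E}[Z_{l,i,j}^TZ_{l,i,j}]$ over $i$ just multiplies by $n$ and over $j$ reconstitutes $H_l^TH_l$, giving $vn\sum_lH_l^TH_l$. The Bernstein moment condition on $W_{l,ij}$ transfers to $\|Z_{l,i,j}\|_2=|W_{l,ij}|\,\|(H_l)_{j\ast}\|_2$ with sub-exponential scale $b\max_l\|H_l\|_{2,\infty}$, exactly as you claim. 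Invoking the sub-exponential (Bernstein-moment) version of the matrix Bernstein inequality for rectangular matrices then yields the stated denominator and the $(d_1+d_2)=2n$ dimensional prefactor; the $4n$ in the statement absorbs the usual symmetrization constant. Either of your two suggested endgames---direct appeal to Tropp's unbounded-summand matrix Bernstein, or truncation plus the bounded version---works, with the first being cleaner and matching the constants without extra logarithmic loss.
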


\begin{myLe}[Lemma 5.3 in \cite{lei2015consistency}]\label{kmeans}
	Let $U$ be an $n\times d$ matrix with $K$ distinct rows with minimum pairwise Euclidean norm separation $\gamma>0$. Let $\widehat{U}$ be another $n\times d$ matrix and $(\widehat{\Theta}, \widehat{X})$ be an solution to $k$-means problem with input $\widehat{U}$, then the number of errors in $\widehat{\Theta}$ as an estimate of the row clusters of $U$ is no larger than $c\|\widehat{U} - U\|_F^2\gamma^{-2}$ for some constant $c>0$.
\end{myLe}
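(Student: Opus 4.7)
The plan is to prove this by the classical $k$-means perturbation argument. Write $U = \Theta_0 X_0$, where $\Theta_0 \in \{0,1\}^{n\times K}$ encodes the true row clusters and the rows $\mu_1,\dots,\mu_K$ of $X_0 \in \mathbb{R}^{K\times d}$ are the distinct rows of $U$, so that $\|\mu_k-\mu_{k'}\|_2\geq \gamma$ for $k\neq k'$. Since $(\widehat{\Theta},\widehat{X})$ is a $k$-means minimizer of $\|\Theta X-\widehat{U}\|_F^2$, evaluating at the feasible pair $(\Theta_0,X_0)$ yields $\|\widehat{\Theta}\widehat{X}-\widehat{U}\|_F\leq \|U-\widehat{U}\|_F$, and the triangle inequality then gives $\|\widehat{\Theta}\widehat{X}-U\|_F\leq 2\|\widehat{U}-U\|_F$.

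Next, I would declare a node $i$ to be \emph{bad} if $\|(\widehat{\Theta}\widehat{X})_i-U_i\|_2\geq \gamma/2$, and let $E$ be the set of bad nodes. Since each bad node contributes at least $\gamma^2/4$ to $\|\widehat{\Theta}\widehat{X}-U\|_F^2$, one obtains $|E|\leq 16\|\widehat{U}-U\|_F^2/\gamma^2$. For any good node $i\notin E$, the assigned centroid $\widehat{X}_{\widehat{\Theta}_i}$ lies strictly within $\gamma/2$ of $\mu_{\Theta_i}$; because the $\mu_k$'s are pairwise separated by $\gamma$, $\mu_{\Theta_i}$ is the unique nearest center in $\{\mu_k\}_{k=1}^K$ to $\widehat{X}_{\widehat{\Theta}_i}$. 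This forces any two good nodes $i,j$ with $\widehat{\Theta}_i=\widehat{\Theta}_j$ to share the same true label, so the map $\pi$ from the estimated clusters that contain at least one good node into $[K]$, defined by $\pi(\widehat{\Theta}_i)=\Theta_i$, is well-defined.

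Finally, a pigeonhole argument promotes $\pi$ to a permutation. Provided every true cluster contains at least one good node, which is guaranteed when $|E|$ is smaller than the minimum true cluster size, $\pi$ is a surjection from a subset of $[K]$ onto $[K]$, hence a bijection. Then $\sigma:=\pi^{-1}$ is a permutation of $[K]$ with $\sigma(\widehat{\Theta}_i)=\Theta_i$ on every good node, so the number of misclassified nodes under $\sigma$ is at most $|E|\leq 16\|\widehat{U}-U\|_F^2/\gamma^2$. In the complementary regime where $|E|$ already equals or exceeds the minimum cluster size, the trivial bound $n$ applies and can be absorbed into the constant $c$. The main subtlety lies in this last step: one has to rule out the degenerate regime by ensuring $\pi$ extends to a permutation, and in the application to Theorem~\ref{row_thm} the balanced-cluster Assumption~\ref{asmp1}, together with the smallness of $\|\widehat{U}-U\|_F^2/\gamma^2$ delivered by the concentration bounds, is what makes this guarantee automatic.
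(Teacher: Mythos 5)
Your proof is correct and reproduces the standard argument behind Lemma~5.3 of Lei and Rinaldo (2015), which is exactly what the paper invokes here (the paper cites the result without reproving it). The three steps you give — use $k$-means optimality plus the triangle inequality to bound $\|\widehat{\Theta}\widehat{X}-U\|_F$, define bad nodes by a $\gamma/2$ threshold and count them via the Frobenius bound, then show that on good nodes the estimated labels induce a well-defined map into the true labels which becomes a permutation once every true cluster retains a good node — are precisely the mechanism of the cited lemma. One tiny slip: with your definitions $\pi(\widehat{\Theta}_i)=\Theta_i$, so $\pi$ itself (not $\pi^{-1}$) is the permutation aligning $\widehat{\Theta}$ to $\Theta$; this does not affect the conclusion. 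You are also right that, as stated, the unconditional bound $c\|\widehat{U}-U\|_F^2\gamma^{-2}$ really requires either the Lei--Rinaldo-style conditional formulation (their bound is stated under the hypothesis that $16\|\widehat{U}-U\|_F^2/\gamma^2$ is smaller than every cluster size) or, as here, the balanced-cluster and fixed-$K$ setting of Assumption~\ref{asmp1} so that the trivial bound $n$ can be absorbed into the constant; your final paragraph correctly identifies exactly where this is needed.
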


\begin{myLe}[Theorem 1 in \cite{de1995decoupling}]\label{decoupling}
	Let $\{X_i\}_{i=1}^n$ be a sequence of independent random variables in a measurable space $(\mathcal{S}, \mathscr{S})$, and let $\{X_i^{(j)}\}$, $j=1,\ldots,k$ be $k$ independent copies of $\{X_i\}$. Let $f_{i_1i_2\ldots i_k}$ be families of functions of $k$ variables taking $(\mathcal{S}\times\ldots\mathcal{S})$ into a Banach space $(\mathcal{B}, \|\cdot\|)$. Then, for all $n\geq k\geq2,\;t>0$, there exist numerical constant $c_k>0$ depending on $k$ only so that, 
	\begin{eqnarray*}
		& &\mathbb{P}(\|\sum_{1\leq i_1 \neq i_2\neq  \ldots \neq i_k \leq n}f_{i_1i_2\ldots i_k}(X_{i_1}^{(1)}, X_{i_2}^{(1)},\ldots,X_{i_k}^{(1)})\| \geq t)\\
		& \leq & c_k\mathbb{P}(c_k\|\sum_{1\leq i_1 \neq i_2\neq  \ldots \neq i_k \leq n}f_{i_1i_2\ldots i_k}(X_{i_1}^{(1)}, X_{i_2}^{(2)},\ldots,X_{i_k}^{(k)})\| \geq t).
	\end{eqnarray*}
\end{myLe}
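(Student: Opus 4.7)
The plan is to prove this classical decoupling bound of de la Pe\~na via a symmetric randomization device that splits the indices $\{1,\ldots,n\}$ into $k$ random ``slots'' and then exploits the resulting conditional independence among the $X$'s appearing at different slots. Write $S:=\sum_{i_1\neq\cdots\neq i_k}f_{i_1\ldots i_k}(X_{i_1}^{(1)},\ldots,X_{i_k}^{(1)})$ for the diagonal sum on the left-hand side of the inequality and $T:=\sum_{i_1\neq\cdots\neq i_k}f_{i_1\ldots i_k}(X_{i_1}^{(1)},\ldots,X_{i_k}^{(k)})$ for the decoupled sum on the right. First I would introduce i.i.d.\ auxiliary labels $\zeta_1,\ldots,\zeta_n$, each uniform on $\{1,\ldots,k\}$ and independent of every $X_i^{(j)}$, and define the $\zeta$-thinned diagonal sum
\[
M \;=\; k^{k}\sum_{i_1\neq\cdots\neq i_k} f_{i_1\ldots i_k}\bigl(X_{i_1}^{(1)},\ldots,X_{i_k}^{(1)}\bigr) \prod_{r=1}^{k}\mathbf 1\{\zeta_{i_r}=r\}.
\]
Since $i_1,\ldots,i_k$ are distinct and the labels are i.i.d.\ uniform on $k$ values, $\mathbb{E}_\zeta\prod_r \mathbf 1\{\zeta_{i_r}=r\}=k^{-k}$, which gives $\mathbb{E}_\zeta M = S$; Jensen in the Banach norm then yields $\|S\|\le \mathbb{E}_\zeta\|M\|$.

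The decoupling itself arises from the following distributional identity. For each fixed $\zeta$, $M$ depends on the $X^{(1)}$-variables only through the $k$ disjoint subsamples $\{X_i^{(1)}:\zeta_i=r\}$, $r=1,\ldots,k$. Because $\{X_i^{(1)}\}_i$ is i.i.d., these subsamples are mutually independent and share the same conditional joint law as the $k$ independent-copy subsamples $\{X_i^{(r)}:\zeta_i=r\}$. Substituting produces, jointly in $\zeta$,
\[
M \;\stackrel{d}{=}\; \widetilde M \;:=\; k^{k}\sum_{i_1\neq\cdots\neq i_k} f_{i_1\ldots i_k}\bigl(X_{i_1}^{(1)},\ldots,X_{i_k}^{(k)}\bigr) \prod_{r=1}^{k}\mathbf 1\{\zeta_{i_r}=r\},
\]
and averaging over $\zeta$ on the right gives $\mathbb{E}_\zeta \widetilde M = T$. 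Combining the two ingredients yields the chain $\mathbb P(\|S\|\ge t)\le \mathbb P(\mathbb{E}_\zeta\|M\|\ge t)=\mathbb P(\mathbb{E}_\zeta\|\widetilde M\|\ge t)$.

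The main obstacle is the final step: passing from this bound on $\mathbb{E}_\zeta\|\widetilde M\|$ to a genuine tail bound on $\|T\|$ itself, with only a $k$-dependent constant loss. The plan here is a Paley-Zygmund-type (reverse-Chebyshev) argument applied to $\zeta$ conditional on the $X^{(r)}$'s. Since $\zeta$ is independent of the $X$'s and $\widetilde M$ is a multinomially-reweighted restriction of $T$ whose conditional mean equals $T$, one shows, by direct computation of the second $\zeta$-moments of the indicator products $\prod_r\mathbf 1\{\zeta_{i_r}=r\}$, that for every realization of the $X^{(r)}$'s the event $\{\|\widetilde M(\zeta)\|\ge c_k^{-1}\|T\|\}$ has $\zeta$-probability bounded below by an absolute constant $p_k>0$ depending only on $k$; this is where the combinatorial factor $c_k$ enters, and it is the only genuinely delicate part of the argument, requiring careful bookkeeping of how index overlaps across the two U-statistic tuples interact with the uniform multinomial measure on $\zeta$. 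Integrating this ``witness-probability'' lower bound over the $X^{(r)}$'s via Fubini, and feeding it back into the chain of inequalities above, produces the advertised comparison $\mathbb P(\|S\|\ge t)\le c_k\,\mathbb P(c_k\|T\|\ge t)$, closing the argument.
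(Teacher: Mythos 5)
This lemma is not proved in the paper; it is simply cited as a known classical decoupling inequality from de la Pe\~na and Montgomery-Smith (1995). Your attempt to reprove it from scratch uses the right starting ingredient (the random-labeling device $\zeta$, the thinned sums $M$, $\widetilde M$, and the identities $\mathbb E_\zeta M=S$, $\mathbb E_\zeta\widetilde M=T$, $M(\zeta)\stackrel d=\widetilde M(\zeta)$ conditionally on $\zeta$), but it breaks at two critical points.

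First, the step $\mathbb P\bigl(\mathbb E_\zeta\|M\|\ge t\bigr)=\mathbb P\bigl(\mathbb E_\zeta\|\widetilde M\|\ge t\bigr)$ does not follow from the conditional equality in law. For each \emph{fixed} $\zeta=z$, $M(z)$ and $\widetilde M(z)$ are indeed equidistributed; but $\mathbb E_\zeta\|M\|$ and $\mathbb E_\zeta\|\widetilde M\|$ are $\zeta$-averages taken at a fixed realization of the $X$'s, so their laws depend on the entire \emph{process} $z\mapsto M(z)$ versus $z\mapsto\widetilde M(z)$, not on the one-dimensional marginals. These two processes have different joint laws: $M(\cdot)$ recycles the same sample $\{X_i^{(1)}\}$ over all labelings $z$, while $\widetilde M(\cdot)$ draws from fresh copies depending on $z$. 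A concrete counterexample with $k=2$, $n=2$, $f_{12}(x,y)=f_{21}(x,y)=xy$ gives $\mathbb E_\zeta\|M\|=2|X_1^{(1)}X_2^{(1)}|$ but $\mathbb E_\zeta\|\widetilde M\|=|X_1^{(1)}X_2^{(2)}|+|X_2^{(1)}X_1^{(2)}|$, which have different distributions. So this link in your chain is broken.

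Second, the Paley--Zygmund step as written is not justified. To obtain a pointwise-in-$X$ ``witness probability'' lower bound $\mathbb P_\zeta\bigl(\|\widetilde M(\zeta)\|\ge c_k^{-1}\|T\|\bigr)\ge p_k$, you would need $\mathbb E_\zeta\|\widetilde M\|^2\le C_k\bigl(\mathbb E_\zeta\|\widetilde M\|\bigr)^2$. This is a Khintchine--Kahane-type hypercontractivity inequality for Banach-space-valued chaos of degree $k$ in the independent labels $\zeta_1,\ldots,\zeta_n$; it is not a ``direct computation of second $\zeta$-moments of the indicator products'' but a substantive Banach-geometry result in its own right, on the same order of difficulty as the decoupling lemma you are trying to prove. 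The original de la Pe\~na--Montgomery-Smith proof avoids this entirely via a purely combinatorial/averaging argument that keeps all the randomness on one side and never invokes a reverse-moment inequality. As it stands, your proposal identifies the correct random-partition setup but then assembles the pieces with an incorrect identity and an unproved hypercontractivity input, so the argument does not close.
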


\subsection{Additional results for simulation and real data analysis}\label{AppendixE}
This section presents the additional experimental results that are not shown in the main text.
\subsubsection{Additional results for simulations}
\paragraph{\textbf{Sensitivity of tuning parameters.}} In Section \ref{simulation}, the tuning parameters for the proposed method \textsf{DSoG} were all set to their true values. Here, we conduct an experiment to show the sensitivity of the selection of tuning parameters. We consider the following rank-deficient multi-layer ScBM. We fix $L = 50$ and set $B_l = \rho B^{(1)}$ for $l \in \{1,\ldots, L/2\}$, and $B_l = \rho B^{(2)}$ for $l \in \{L/2+1,\ldots, L\}$, with 
	\begin{equation*}
		B^{(1)} = U\begin{bmatrix} 1.2 & 0 & 0 \\ 0 & 0.4 & 0 \\ 0 & 0 & 0 \end{bmatrix}V^T \approx \begin{bmatrix}0.287 & 0.293 & 0.326 \\ 0.293 & 0.930 & 0.246 \\ 0.326 &0.246 & 0.382 \end{bmatrix}
	\end{equation*} 
	and
	\begin{equation*}
		B^{(2)} = U\begin{bmatrix} 1.2 & 0 & 0 \\ 0 & -0.4 & 0 \\ 0 & 0 & 0 \end{bmatrix}V^T \approx \begin{bmatrix}0.113 & 0.507 & 0.074 \\ 0.507 & 0.670 & 0.554 \\ 0.074 & 0.554 & 0.018 \end{bmatrix},
	\end{equation*} 
	where 
	\begin{equation*}
		U = V \approx \begin{bmatrix} 0.408 & 0.467 & -0.784 \\ 0.817 & -0.571 & 0.085 \\ 0.408 & 0.675 & 0.615
		\end{bmatrix}.
	\end{equation*}
We consider $n = 500$ nodes per network across $K_y = 3$ row clusters and $K_z = 3$ column clusters, with row cluster sizes $n_1^y = 150, n_2^y = 150,  n_3^y = 200$ and column cluster sizes $n_1^z = 150,  n_2^z = 200, n_3^z = 150$. In this setup, the true values of the tuning parameters are $K = 2< K_y=3$ and $K' = 2< K_z=3$. In our experiment, we set $\hat{K} = K_y = 3$ and $\hat{K'} = K_z = 3$, which involves incorrect values of the tuning parameters. The misclassification rates of four methods for different values of $\rho$ are shown in Figure \ref{sensitivity}. It can be seen that \textsf{DSoG} continues to outperform other methods, although there is certain performance degradation for all methods.
\begin{figure*}[!htbp]
    \centering
    \begin{subfigure}[b]{0.48\textwidth}
        \includegraphics[width=\textwidth]{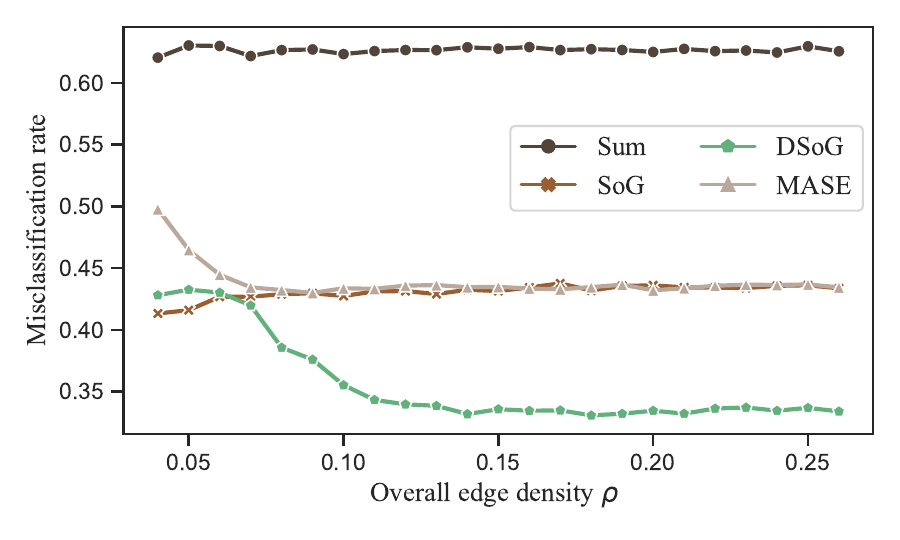}
        \vspace{-0.1\textwidth}
        \caption{row clustering}
    \end{subfigure}\hspace{0.3cm}
    \begin{subfigure}[b]{0.48\textwidth}
        \includegraphics[width=\textwidth]{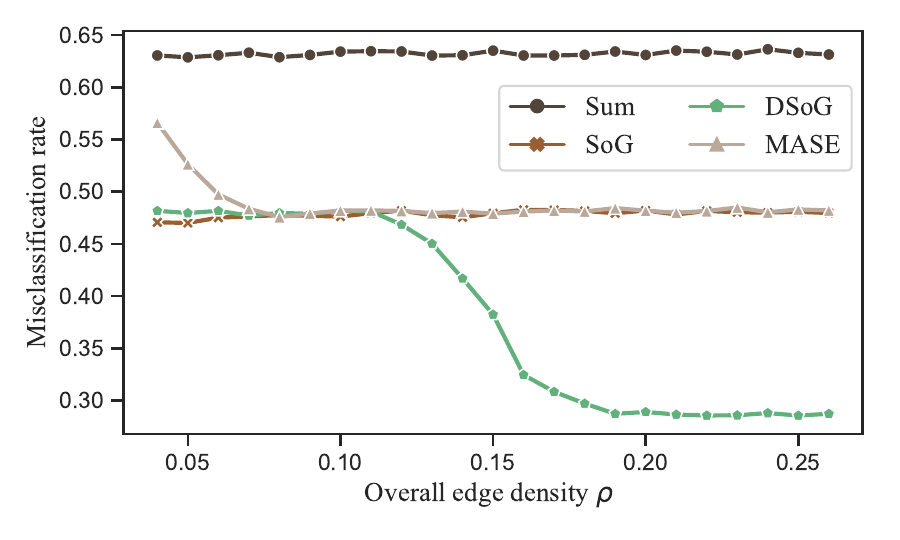}
        \vspace{-0.1\textwidth}
        \caption{column clustering}
    \end{subfigure}\\
    \caption{Misclassification rates of four methods in terms of row and column clustering with incorrect selection of tuning parameters.}
    \label{sensitivity}
\end{figure*}

\paragraph{\textbf{Comparison with likelihood-based methods.}}
In Section \ref{simulation}, the comparison is limited to spectral clustering-based methods. In addition, likelihood-based methods designed for multi-layer networks, as developed in \cite{wang2021fast} and \cite{fu2023profile}, are also capable of handling directed networks. Thus, we compare the performance of likelihood-based methods with our proposed approach in the context of multi-layer ScBMs. Specifically, we consider the model from Experiment 1, with the overall edge density $\rho = 0.1$ and the number of network layers $L = 10$. Both the number of row and column communities are set to 3, with balanced community sizes. The number of nodes $n$ varies from 300 to 1500, and the networks are generated from the multi-layer ScBM. Within this setting, we study two different scenarios. In the first scenario, the row and column communities are different, which is the primary interest in co-clustering. In the second scenario, the row and column communities are identical.

\begin{figure*}[hp]
   \centering
   \begin{subfigure}[b]{0.45\textwidth}
       \includegraphics[width=\textwidth]{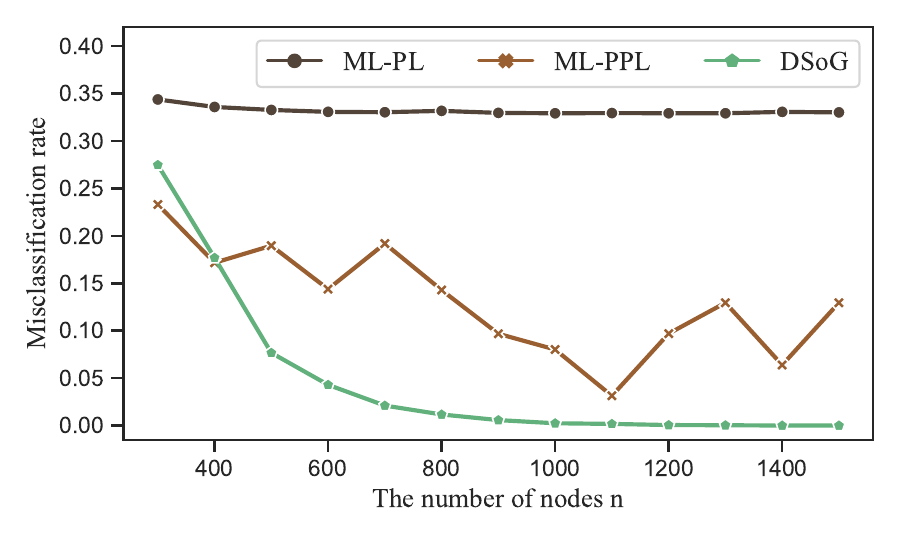}
   \end{subfigure}\hspace{0.5cm}
   \begin{subfigure}[b]{0.45\textwidth}
       \includegraphics[width=\textwidth]{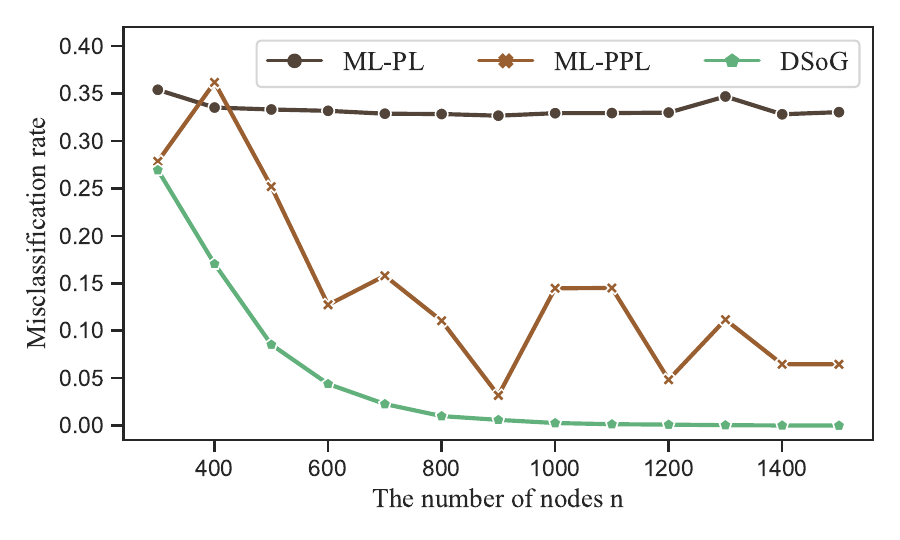}
   \end{subfigure}\\
   \vspace{0.015\textwidth}
   \begin{subfigure}[b]{0.45\textwidth}
       \includegraphics[width=\textwidth]{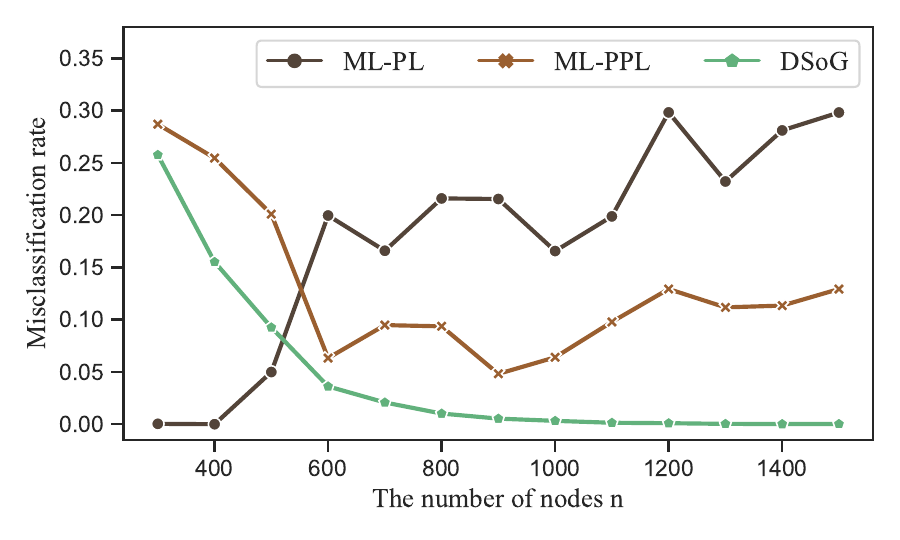}
   \end{subfigure}\hspace{0.6cm}
   \begin{subfigure}[b]{0.45\textwidth}
       \includegraphics[width=\textwidth]{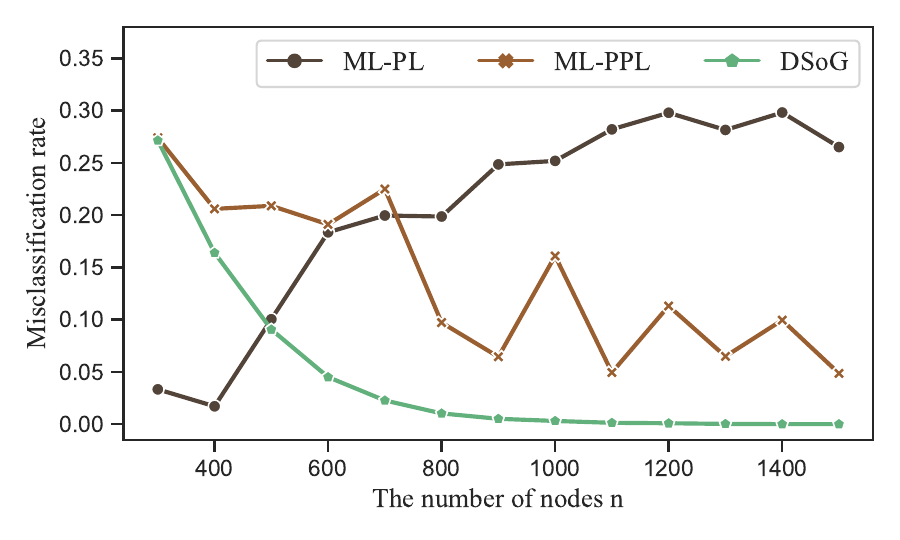}
   \end{subfigure}\\
   \vspace{-0.02\textwidth}
   {\scriptsize (a) \textbf{\textsf{ML-PL} and \textsf{ML-PPL} with \textsf{Sum} Initialization.}} \\
    \vspace{0.025\textwidth}
   \begin{subfigure}[b]{0.45\textwidth}
       \includegraphics[width=\textwidth]{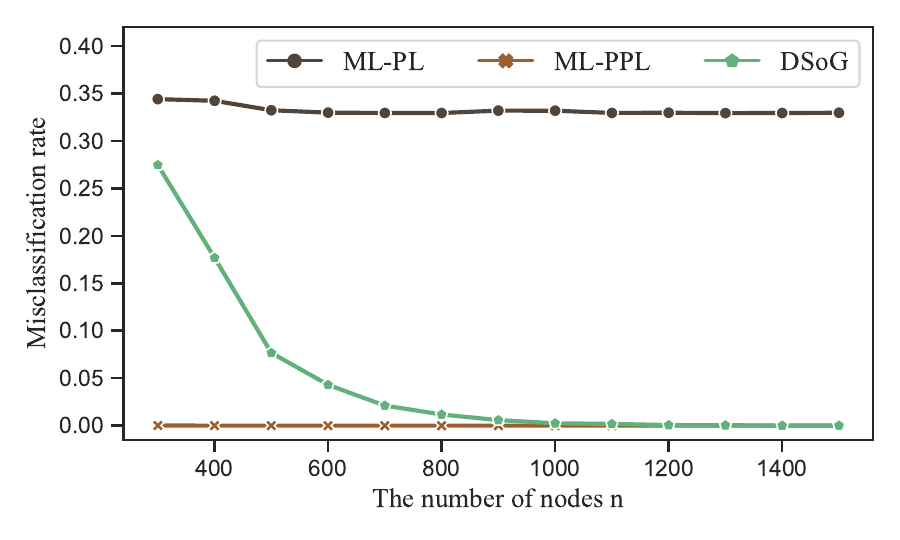}
   \end{subfigure}\hspace{0.6cm}
   \begin{subfigure}[b]{0.45\textwidth}
       \includegraphics[width=\textwidth]{row_dsoginitial.pdf}
   \end{subfigure}\\
   \vspace{0.005\textwidth}
   \begin{subfigure}[b]{0.45\textwidth}
       \includegraphics[width=\textwidth]{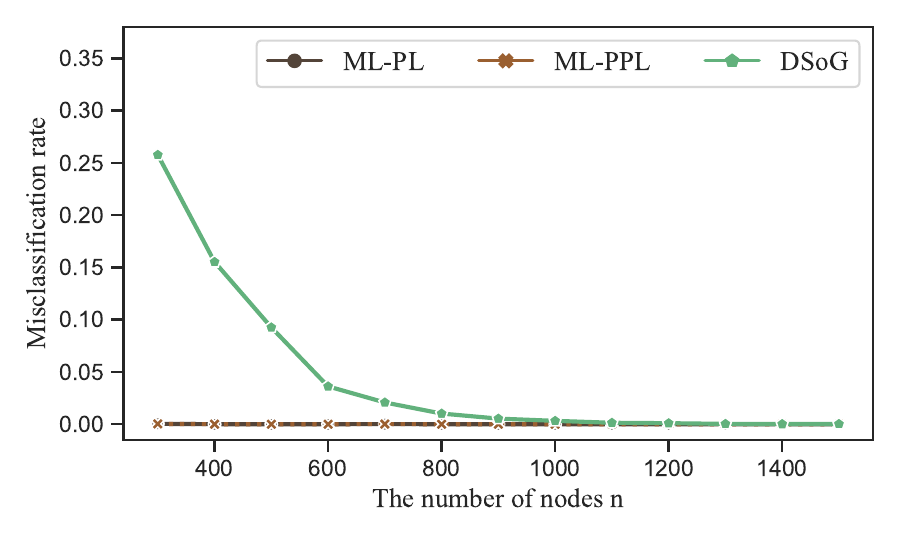}
   \end{subfigure}\hspace{0.6cm}
   \begin{subfigure}[b]{0.45\textwidth}
       \includegraphics[width=\textwidth]{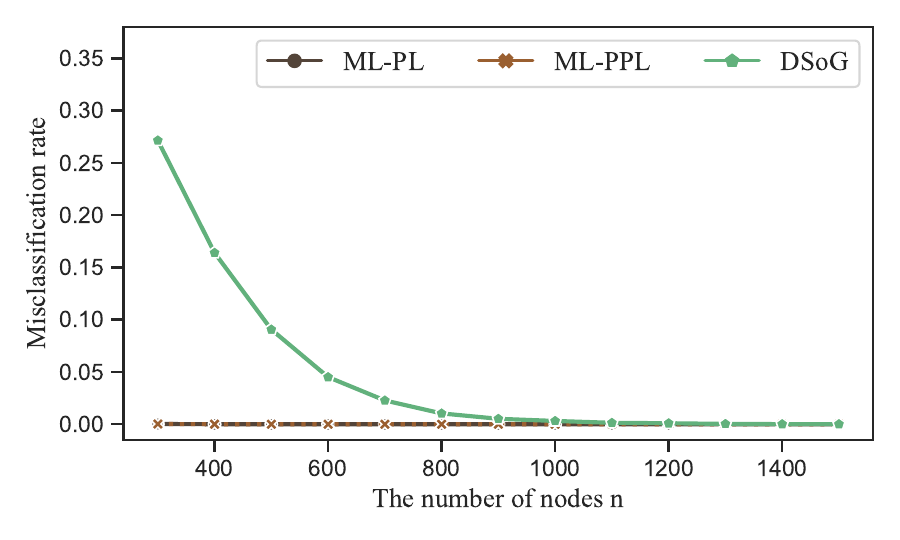}
   \end{subfigure}\\
   {\scriptsize (b) \textbf{\textsf{ML-PL} and \textsf{ML-PPL} with \textsf{DSoG} Initialization.}} \\
	\vspace{-0.02\textwidth}
    \caption{Misclassification rates of three methods in terms of row clustering (left panel) and column clustering  (right panel) with varying $n$. The initial values for both \textsf{ML-PL} and \textsf{ML-PPL} are set to the estimates obtained from \textsf{Sum} in (a) and from \textsf{DSoG} in (b), respectively. In both (a) and (b), the row and column membership matrices of the underlying multi-layer ScBM are different (top row) and identical (bottom row).}
    \label{likelihood based dsog}
\end{figure*}

Since likelihood-based methods are sensitive to initialization, we conduct simulations using two kinds of initial values for these likelihood-based methods. One is the spectral-based method \textsf{Sum} which is suggested in \cite{wang2021fast} and \cite{fu2023profile}, and the other is our method \textsf{DSoG}. The averaged misclassification rates over 50 replications are presented in Figure \ref{likelihood based dsog}, where \textsf{ML-PL} denotes the pseudo-likelihood-based algorithm in \cite{wang2021fast}, and \textsf{ML-PPL} denotes the profile-pseudo likelihood-based method in \cite{fu2023profile}. It is observed that when initializing with \textsf{Sum}, likelihood-based methods are unstable and inferior performance compared to \textsf{DSoG}. However, with \textsf{DSoG} initialization, the two likelihood-based methods become stable. Particularly in scenarios where row and column communities are identical, the performance of likelihood-based methods is superior. This is not only due to the ability of  \textsf{ML-PL} and \textsf{ML-PPL} to handle directed multi-layer networks, but also due to the proper selection of the initial estimator \textsf{DSoG}. In the scenario where row and column communities differ, which is the primary interest in co-clustering, the performance of \textsf{ML-PL} is not satisfactory, suggesting that it may not be suitable for handling directed multi-layer networks without specific adjustments.


\paragraph{\textbf{Efficiency evaluation.}}
In Table \ref{running time}, we present the average running time of compared algorithms on a directed network with 1000 nodes and 50 layers over 100 replications. These simulations were performed on a PC with an Apple M2 CPU processor. The initial values for \textsf{ML-PL} and \textsf{ML-PPL} are set as the result of \textsf{DSoG}. The results show that the proposed \textsf{DSoG} method has the shortest running time, demonstrating its computational efficiency.
\begin{table}[h] 
	\caption{The running time of four methods on a directed network with 1000 nodes and 50 layers.}
	\centering
	\label{running time}
	\begin{tabular}{ccccc}
		\toprule

		Methods & \quad DSoG \quad &  \quad MASE  \quad &  \quad ML-PL  \quad &  \quad ML-PPL \\
		\midrule
		Running time (s) &  \quad6.84  \quad &  \quad 27.18 \quad & \quad 22.63 \quad & \quad 34.59\\

		\bottomrule
	\end{tabular}
\end{table}

\subsubsection{Additional results for real data analysis}
\paragraph{\textbf{The }\textsf{Sum }\textbf{method for WFAT data.}}
Similar to \textsf{DSoG}, we use the scree plot and the Davies-Bouldin score to select the embedding dimension and the number of clusters. We select the embedding dimension as 2, with $K_y = 4$ and $K_z = 5$ for the respective row and column clusters. Regarding the row clustering, the number of clusters identified by the \textsf{Sum} method is lower compared to the \textsf{DSoG} method, which identified 5 clusters. The row clusters estimated by the \textsf{Sum} method are depicted in Figure \ref{map_sum_row}. Compared to the row clusters estimated by the \textsf{DSoG} method in Figure \ref{map}(b), a significant portion of communities 1 and 4 in Figure \ref{map}(b) appear to have merged into one single community, indicating a loss of cluster divisions in the summation process.
\begin{figure*}[!htbp]
    \centering
        \includegraphics[width=0.95\textwidth]{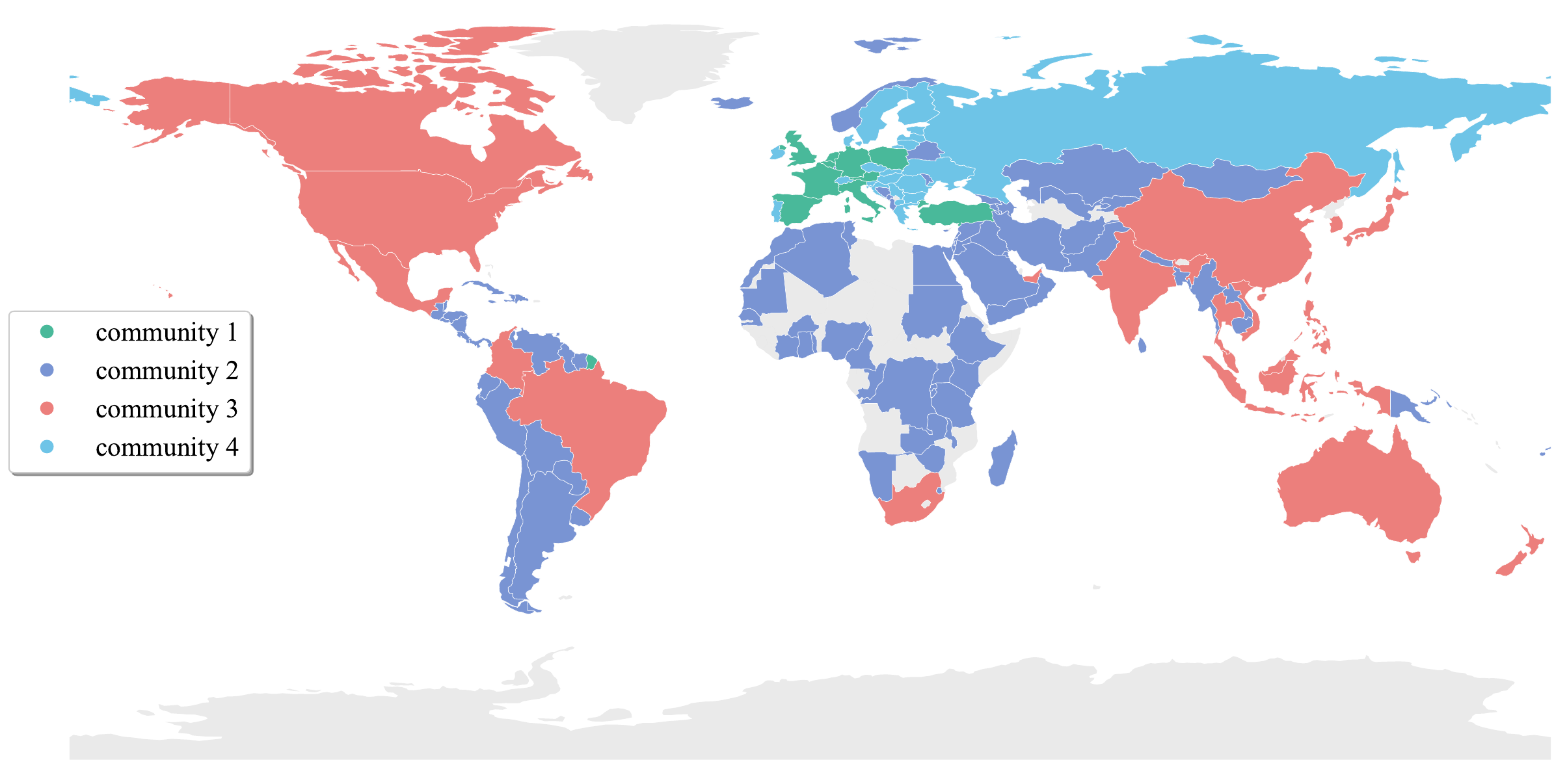}
    \caption{Row community structure separation of food trade networks containing 142 major countries using the \textsf{Sum} method. Colors indicate communities, where light gray corresponds to countries that do not participate in clustering.}
    \label{map_sum_row}
\end{figure*}
\paragraph{\textbf{Block probability matrices quantification.}}
To complement the analysis provided in Figure \ref{Adjs}, we estimate $B_l$'s in our method to show that the single-layer networks alone are not sufficient to provide complete cluster information. Specifically, we estimate $\hat B_l$ by $\hat B_l = \hat Y^T \hat \Delta_y A_l \hat \Delta_z \hat Z$, where $\hat Y$ and $\hat Z$ are the estimated row and column membership matrices, respectively. Both $\hat \Delta_y$ and $\hat \Delta_z$ are $K\times K$ diagonal matrices with the $k$th diagonal entry being the $l_2$ norm of the $k$th column of $\hat Y$ and $\hat Z$, respectively. Figure \ref{Bhat} shows the estimated block  probability matrices, which correspond to the products in Figure  \ref{Adjs}, respectively. It can be seen that for certain layers of networks, the estimated block probability matrices are rank-deficient, indicating that signal for clusters is not strong. For example, for the layer ``Vegetables preserved (frozen)'' (see Figure \ref{Bhat}(a)), the fifth clusters in both the row and column clusters show almost no connections with the other clusters, indicating the vanish of fifth cluster when using this single layer.

\begin{figure*}[h]
    \centering
    \begin{subfigure}[b]{0.32\textwidth}
        \includegraphics[width=\textwidth]{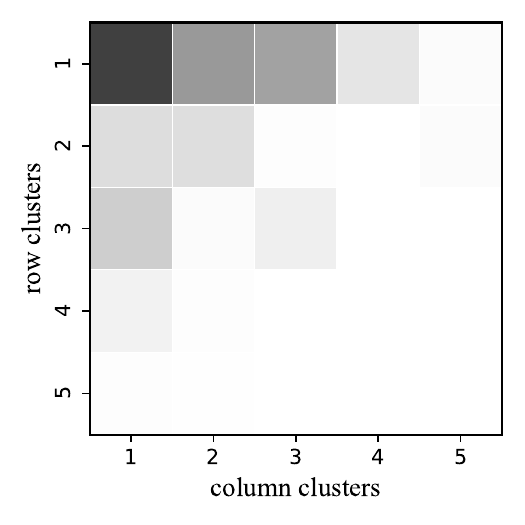}
        \vspace{-0.2\textwidth}
        \caption{Vegetables preserved, frozen}
    \end{subfigure}
    \begin{subfigure}[b]{0.32\textwidth}
        \includegraphics[width=\textwidth]{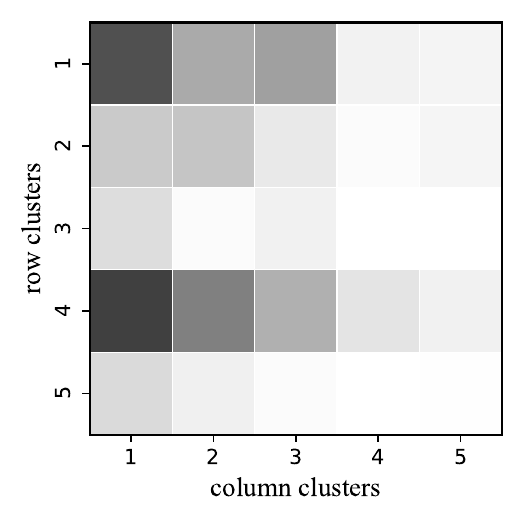}
        \vspace{-0.2\textwidth}
        \caption{Coffee, green}
    \end{subfigure}
    \begin{subfigure}[b]{0.32\textwidth}
        \includegraphics[width=\textwidth]{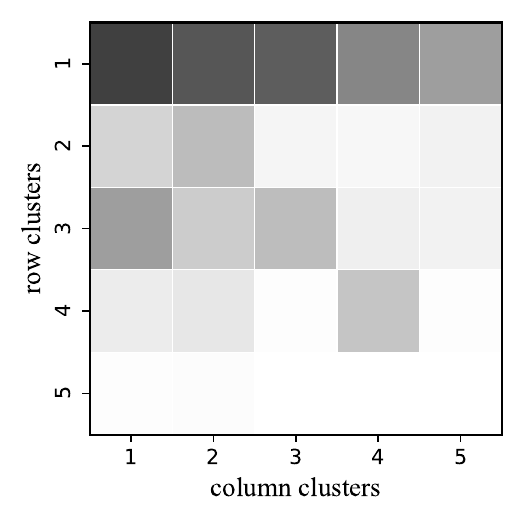}
        \vspace{-0.2\textwidth}
        \caption{Food wastes}
    \end{subfigure} \\
    \vspace{0.025\textwidth}
    \begin{subfigure}[b]{0.32\textwidth}
        \includegraphics[width=\textwidth]{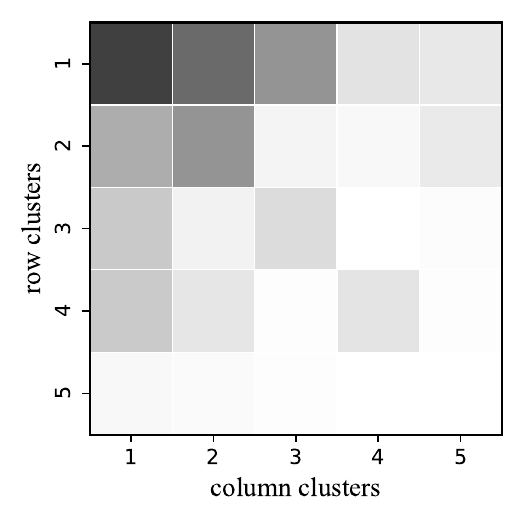}
        \vspace{-0.2\textwidth}
        \caption{Juice of fruits n.e.c.}
    \end{subfigure}
    \begin{subfigure}[b]{0.32\textwidth}
        \includegraphics[width=\textwidth]{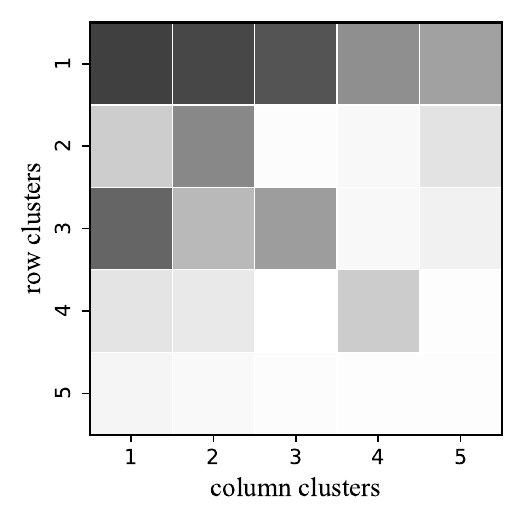}
        \vspace{-0.2\textwidth}
        \caption{Chocolate products nes}
    \end{subfigure}
    \begin{subfigure}[b]{0.32\textwidth}
        \includegraphics[width=\textwidth]{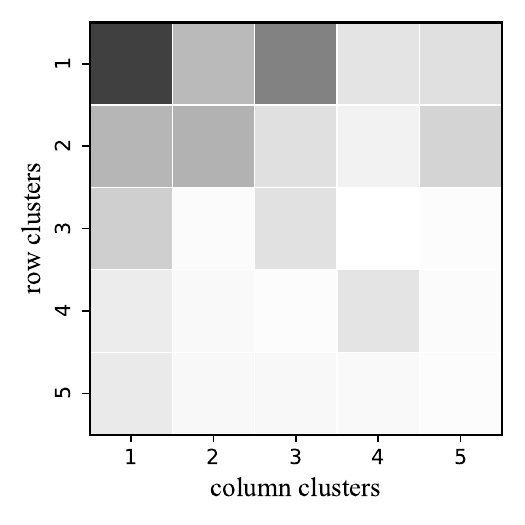}
        \vspace{-0.2\textwidth}
        \caption{Rice, paddy}
    \end{subfigure}
    
    \caption{The estimated block  probability matrices for six products. Darker colors indicate larger values.}
    \label{Bhat}
\end{figure*}

\section*{Acknowledgements}
	The authors are grateful to the editor, associate editor, and two anonymous reviewers for their insightful comments and suggestions.


\bibliographystyle{chicago}
\bibliography{reference}
\end{document}